\newtheorem{mytheo}{Theorem}[section]
\newtheorem{myexp}{Experiment}[section]
\newtheorem{lem}[mytheo]{Lemma}
\newcounter{remark}
\newcounter{problem}
\def\@upcite#1#2{\textsuperscript{[{#1\if@tempswa , #2\fi}]}}
\newenvironment{proof}{\vspace{1ex}
{\it Proof. }\hspace{0.3em}}{\vspace{1ex}} \journal{Journal of
Computational Physics}
\begin{document}
\begin{frontmatter}
\title{ General local energy-preserving integrators for solving\\ multi-symplectic Hamiltonian PDEs }
\author[Li]{Yu--Wen~Li}
\author[Li]{Xinyuan Wu\corref{cor1}}

\tnotetext[]{The research is supported in part by the Natural
Science Foundation of China under Grant 11271186, by NSFC and RS
International Exchanges Project under Grant 113111162,  by the
Specialized Research Foundation for the Doctoral Program of Higher
Education under Grant 20130091110041, and by the 985 Project at
Nanjing University under Grant 9112020301.}

\address[Li]{Department of Mathematics, Nanjing University; State Key Laboratory
for Novel Software Technology at Nanjing University, Nanjing 210093,
P.R.China}

 \ead{farseer1118@sina.cn, xywu@nju.edu.cn}
\cortext[cor1]{Corresponding author.}

\begin{abstract}
In this paper we propose and investigate a general approach to constructing local
energy-preserving algorithms which can be of arbitrarily high order in time for solving
Hamiltonian PDEs. This approach is based on the temporal discretization using continuous
Runge-Kutta-type methods, and the spatial discretization using pseudospectral methods
or Gauss–Legendre collocation methods. The local energy conservation law of our new
schemes is analyzed in detail. The effectiveness of the novel local energy-preserving
integrators is demonstrated by coupled nonlinear Schr\"odinger equations and 2D nonlinear
Schr\"odinger equations with external fields. Our new schemes are compared with some
classical multi-symplectic and symplectic schemes in numerical experiments. The numerical
results show the remarkable \emph{long-term} behaviour of our new schemes.
\end{abstract}

\begin{keyword}
Multi-symplectic PDE \sep  Local energy conservation law\sep
Energy-preserving RK type method\sep  Pseudospectral method \sep
Gauss-Legendre collocation method \sep Local energy-preserving
method
\end{keyword}
\end{frontmatter}
\vskip0.5cm \noindent Mathematics Subject Classification (2010):
35C15, 35L05, 35L20, 35L53, 35L70 \vskip0.5cm \pagestyle{myheadings}
\thispagestyle{plain} \markboth{}{\centerline{\small Y. W. Li and X.
Wu}}

\section{Introduction}
Since the multi-symplectic structure was developed by Bridges and
Marsden {\em et al.} {\cite{Bridges1997,Marsden1999}} for a class of
PDEs, the construction and analysis of multi-symplectic numerical
integrators which conserve the discrete multi-symplectic structure
have become one of the central topics in PDE algorithms. Many
multi-symplectic schemes have been proposed such as multi-symplectic
RK/PRK/RKN methods, finite volume methods, spectral/ pseudospectral
methods, splitting methods and wavelet collocation methods (see,
e.g.
{\cite{Bridges2001,Reich2001,Chen2001,Hong2005,Hong2007,Maclachlan2014,Reich2000,Ryland2007,Zhu2011}}).
All of these methods focus on the preservation of some {kinds of}
discrete multi-symplecticity. However, {multi-symplectic} PDEs have
many other important properties such as the local energy
conservation law (ECL) and the local momentum conservation law
(MCL). In general, multi-symplectic integrators can only preserve
exactly quadratic conservation laws and invariants. In the paper
{\cite{Reich2000}}, Reich firstly proposed two methods that preserve
the discrete ECL and MCL respectively. In {\cite{Wang2008}}, Wang
{\em et al.} generalized Reich's work. In \cite{Cai1,Cai2,Chen2011},
Chen{ \em et al}., and  Cai {\em et al}. constructed some local
structure-preserving schemes for special multi-symplectic PDEs.  In
\cite{Gong2014}, Gong {\em et al}. developed a general approach to
constructing local structure-preserving algorithms. Local
energy-preserving algorithms preserve the discrete global energy
under suitable boundary conditions. Thus in the case of
multi-symplectic PDEs, they cover the traditional global
energy-preserving algorithms (see, e.g.
{\cite{Celledoni2010,Fei1991,Guo1983,Kara2013}}). However, most of
the local and global energy-preserving methods are based on the
discrete gradient for the temporal discretization. Therefore, they
can have only second order accuracy in time. We note that Hairer
\cite{Hairer2010} developed a family of energy-preserving continuous
Runge--Kutta--type methods of arbitrarily high order for Hamiltonian
ODEs. Motivated by Hairer's work, in this paper, we consider general
local energy-preserving methods for multi-symplectic Hamiltonian
PDEs, and we are hopeful of obtaining new high-order schemes which
exactly preserve the ECL.

Besides, most of the existing local energy-preserving
algorithms are based on the spatial discretization using the
implicit midpoint rule. Although the authors in
\cite{Celledoni2010,Gong2014} mentioned a class of global
energy-preserving schemes based on the (pseudo) spectral
discretization for the spatial derivative, it seems that there is
little work investigating the local energy-preserving property of
these schemes in the literature. In this paper,  we
investigate the preservation of the discrete ECL for our
new schemes which are based on the pseudospectral spatial discretization.
Meanwhile, we also design a class of local energy-preserving
schemes based on the general Gauss-Legendre collocation spatial discretization.

The paper is organized as follows. In Section \ref{MSECRK}, we
briefly introduce multi-symplectic PDEs and energy-preserving
continuous Runge--Kutta methods. In Section \ref{CLA}, we present a
general approach to constructing local energy-preserving schemes.
This approach is illustrated by coupled nonlinear Schr\"{o}dinger
equations and 2D nonlinear Schr\"{o}dinger equations in Section
\ref{LECNS} and \ref{LE2DS}, respectively. We compare
our new schemes with classical multi-symplectic and symplectic
schemes in Section \ref{Numer_Experi} and \ref{Numer_Experi2}.
The last section is concerned with the conclusion.

\section{Multi-symplectic PDEs and energy-preserving continuous Runge--Kutta
methods}\label{MSECRK} A multi-symplectic PDE with one temporal
variable and two spatial variables can be written in the form:
\begin{equation}\label{MSTSV}Mz_{t}+Kz_{x}+Lz_{y}=\nabla_{z} S(z),\quad z\in \mathbb{R}^{d},\end{equation}
where $M,\ K,$ and $L$ are skew-symmetric $d$ by $d$
real matrices, $\ S: \mathbb{R}^{d}\rightarrow \mathbb{R}$ is a
smooth scalar-valued function of the state variable
variable $z$ and $\nabla_{z}$ is the gradient operator. Three
differential 2-forms are defined by
\begin{equation*}
\omega=\frac{1}{2}dz\wedge Mdz,\quad\kappa=\frac{1}{2}dz\wedge
Kdz,\quad\tau=\frac{1}{2}dz\wedge Ldz.
\end{equation*}
\eqref{MSTSV} then has the multi-symplectic conservation law (MSCL):
\begin{equation}\label{MSCL}
\partial_{t}\omega+\partial_{x}\kappa+\partial_{y}\tau=0.
\end{equation}
Another important local conservation law is the ECL:
\begin{equation*}
\partial_{t}E+\partial_{x}F+\partial_{y}G=0,
\end{equation*}
where
$$E=S(z)-\frac{1}{2}z^{\intercal}Kz_{x}-\frac{1}{2}z^{\intercal}Lz_{y},\quad F=\frac{1}{2}z^{\intercal}Kz_{t},\quad G=\frac{1}{2}z^{\intercal}Lz_{t}.$$
When $L=0,$ the equation \eqref{MSTSV} reduces to the case of one
spatial dimension:
\begin{equation}\label{CMS}
Mz_{t}+Kz_{x}=\nabla S(z).
\end{equation}
Correspondingly, the ECL  reduces to:
\begin{equation*}
\partial_{t}E+\partial_{x}F=0,
\end{equation*}
where
$$E=S(z)-\frac{1}{2}z^{\intercal}Kz_{x},\quad F=\frac{1}{2}z^{\intercal}Kz_{t}.$$

Note that the energy density $E$ is related to the gradient
of $S$. If one is interested in constructing
schemes which can preserve the discrete ECL, a natural idea is
replacing $\nabla S$ by the discrete gradient (DG) $\bar{\nabla}
S.$ For details of the discrete gradient,
readers are referred to {\cite{Gonzalez1996,Maclachlan1999}}.

A limitation of the DG method is that it can only achieve
second-order accuracy in general. Therefore, classical
local energy-preserving methods {based on the DG} cannot reach
an order higher than $2$ in temporal direction unless
the composition technique is applied, which is not our interest in
this paper.

In contrast to the DG method, Hairer's seminal work overcomes the order barrier. In what follows, we introduce
the approach summarily.

Consider  autonomous ODEs:
\begin{equation}
\left\{
\begin{aligned}
&y^\prime=f(y),\quad y\in \mathbb{R}^{d},\\
&y(t_{0})=y_{0}.\\
\end{aligned}\right.
\end{equation}
Hairer's approach can be regarded as  a continuous Runge--Kutta
method :
\begin{equation}\label{CRK}
\left\{
\begin{aligned}
&y_{\tau}=y_{0}+h\int_{0}^{1}A_{\tau,\sigma}f(y_{\sigma})d\sigma,\\
&y_{1}=y_{0}+h\int_{0}^{1}B_{\sigma}f(y_{\sigma})d\sigma,\\
&B_{\sigma}\equiv1,A_{\tau,\sigma}=\sum_{i=1}^{s}\frac{1}{b_{i}}\int_{0}^{\tau}l_{i}(\alpha)d\alpha l_{i}(\sigma),\\
\end{aligned}\right.
\end{equation}
where $h$ is the stepsize, $\{l_{i}(\tau)\}_{i=1}^{s}$ are Lagrange
interpolating polynomials based on the $s$ distinct points
$c_{1},c_{2},\ldots,c_{s}$, $b_{i}=\int_{0}^{1}l_{i}(\tau)d\tau$ for
$i=1,2,\ldots,s$, and $y_{\tau}$ approximates the value of
$y(t_{0}+\tau h)$ for {$\tau\in[0,1]$.} The continuous RK method can
be expressed in a Buchter tableau as
\[%
\begin{tabular}
[c]{l}%
\\
\\[2mm]%
\begin{tabular}
[c]{c|c}%
$C_{\tau}$ & $A_{\tau,\sigma}$\\\hline &
$\raisebox{-1.3ex}[0pt]{$B_{\tau}$}$\\
\end{tabular}
\end{tabular}%
\]
with
$$\tau=C_{\tau}=\int_{0}^{1}A_{\tau,\sigma}d\sigma.$$ If
$f(y)=J^{-1}\nabla H(y)$, $J$ is a skew-symmetric matrix, then this
method preserves the Hamiltonian: $H(y_{1})=H(y_{0})$. Let $r$ be
the order of the quadrature formula
$(b_{i},c_{i})_{i=1}^{s}$, then the order of this continuous
method is given by:
\begin{equation}\label{Order}
\left\{
\begin{aligned}
&2s,\quad\quad\quad\quad     \text{for $r\geq 2s-1$,}\\
&2r-2s+2,\ \text{ for $r\leq 2s-2$}.\\
\end{aligned}\right.
\end{equation}
Moreover, if the quadrature nodes are symmetric, i.e.
$c_{i}=1-c_{s+1-i}$ {for $i=1,2,\ldots,s$, then the} method
\eqref{CRK} is also symmetric. Clearly, by choosing an s-point
Gauss-Legendre quadrature formula, we get a symmetric continuous RK method of
order $2s$. Besides, although this method is not symplectic, it is
conjugate-symplectic up to at least order $2s+2$. The proof can be
found in {\cite{Hairer2010}}. In view of these prominent properties,
we select \eqref{CRK} as our elementary method for the {time
integration
 of Hamiltonian PDEs}. We denote this method by CRK and call
$(b_{i},c_{i})_{i=1}^{s}$ as the generating quadrature fomula in the
remainder of this paper.

\section{Construction of local energy-preserving algorithms for Hamiltonian
PDEs}\label{CLA}
\subsection{Pseudospectral spatial discretization}
For simplicity, we first consider the following PDE
with one spatial variable:
\begin{equation}\label{ONEDIM}Mz_{t}+Kz_{x}=\nabla_{z} S(z,x).\end{equation}
In the classical multi-symplectic PDE \eqref{CMS}, the Hamiltonian $S$ is
independent of the variable $x$. It should be noted that
\eqref{ONEDIM} does not have the MSCL and the MCL, but the local energy conservation law still
holds:

\begin{equation}\label{1DECL}
\partial_{t}E+\partial_{x}F=0,
\end{equation}
where
$$E=S(z,x)-\frac{1}{2}z^{\intercal}Kz_{x},\quad F=\frac{1}{2}z^{\intercal}Kz_{t}.$$
Thus local energy-preserving methods can be more widely used than
classical multi-symplectic methods. Most of multi-symplectic methods
can be constructed by concatenating two ODE methods {in time and
space, respectively}. The temporal method is always symplectic,
while the spatial one may be not. However, in our new schemes, we
use the CRK method instead of the symplectic method for {the} time
integration. In this subsection, we consider a class of convenient
methods for the spatial discretization under the periodic boundary
condition.  They are the Fourier spectral, the pseudospectral, and
the wavelet collocation method (see, e.g.
\cite{Reich2001,Chen2001,Zhu2011}). A common
characteristic of the three methods is the substitution of a
skew-symmetric differential matrix $D$ for the operator
$\partial_{x}$ . For example, assuming $z(x_{0},t)=z(x_{0}+L,t)$,
\eqref{ONEDIM} becomes a system of ODEs in time after the
pseudospectral spatial discretization :
\begin{equation}\label{SDS}
M\frac{d}{dt}z_{j}+K\sum_{k=0}^{N-1}D_{jk}z_{k}=\nabla_{z}
S(z_{j},x_{j}),
\end{equation}
for $j=0,1,\ldots,N-1,$
where $N$ is an even integer, $x_{j}=x_{0}+j\Delta
x,j=0,1,\cdots ,N-1,\Delta x=\frac{L}{N},$ $z_{j}\approx
z(x_{j},t),\ D$ is a skew-symmetric matrix whose entries are
determined by (see, e.g. \cite{Chen2001})

\begin{equation*}
D_{jk}=\left\{\begin{aligned}
&\frac{\pi}{L}(-1)^{j+k}cot(\pi\frac{x_{j}-x_{k}}{L}),\ j\neq k,\\
&0,\quad\quad\quad\quad\quad\quad\quad\quad\quad j=k.\\
\end{aligned}\right.
\end{equation*}

Multiplying both sides of \eqref{SDS} by
$\frac{d}{dt}z_{j}^{\intercal}$, we get $N$ semi-discrete ECLs
(see, e.g. \cite{Chen2010,Kong2013}):
\begin{equation}\label{sd_ECLs}
\frac{d}{dt}E_{j}+\sum_{k=0}^{N-1}D_{jk}F_{jk}=0,
\end{equation}
for $j=0,1,\ldots,N-1,$ where
\begin{equation*}
\begin{aligned}
&E_{j}=S(z_{j},x_{j})-\frac{1}{2}z_{j}^{\intercal}K\sum_{k=0}^{N-1}D_{jk}z_{k},\\
&F_{jk}=\frac{1}{2}z_{k}^{\intercal}K\frac{d}{dt}z_{j}+\frac{1}{2}z_{j}^{\intercal}K\frac{d}{dt}z_{k}.\\
\end{aligned}
\end{equation*}
{The term $\sum_{k=0}^{N-1}D_{jk}F_{jk}$} can be considered as the
discrete $\partial_{x}F(z_{j})$ :
\begin{equation}\label{SDECL}\sum_{k=0}^{N-1}D_{jk}F_{jk}=\frac{1}{2}\delta_{x}z_{j}^{\intercal}K\frac{d}{dt}z_{j}+
\frac{1}{2}z_{j}^{\intercal}K\frac{d}{dt}\delta_{x}z_{j}\approx\frac{1}{2}\partial_{x}z(x_{j},t)^{\intercal}K\frac{d}{dt}z_{j}+
\frac{1}{2}z_{j}^{\intercal}K\frac{d}{dt}\partial_{x}z(x_{j},t)=\partial_{x}F(z_{j}),\end{equation}
where $\sum_{k=0}^{N-1}D_{jk}z_{k}=\delta_{x}z_{j}\approx\partial_{x}z(x_{j},t).$

If $S$ is independent of the variable $x$, then $N$ semi-discrete
MSCLs {(see, e.g. {\cite{Reich2001,Chen2001}}) also hold}:
\begin{equation*}
\begin{aligned}
&\frac{d}{dt}\omega_{j}+\sum_{k=0}^{N-1}D_{jk}\kappa_{jk}=0,\\
&\omega_{j}=\frac{1}{2}dz_{j}\wedge Mdz_{j},\\
&\kappa_{jk}=\frac{1}{2}(dz_{j}\wedge Kdz_{k}+dz_{k}\wedge Kdz_{j}),\\
\end{aligned}
\end{equation*}
{for $j=0,1,\ldots,N-1$.} Here $\sum_{k=0}^{N-1}D_{jk}\kappa_{jk}$
(the discrete $\partial_{x}\kappa(z_{j})$) can be comprehended in a
similar way to \eqref{SDECL}.

After the temporal discretization using the CRK method
\eqref{CRK}, the full discrete scheme can be written as follows:
\begin{equation}\label{S}
\left\{\begin{aligned}
&z_{j}^{\tau}=z_{j}^{0}+\Delta t\int_{0}^{1}A_{\tau,\sigma}\delta_{t}z_{j}^{\sigma}d\sigma,\\
&z_{j}^{1}=z_{j}^{0}+\Delta t\int_{0}^{1}\delta_{t}z_{j}^{\sigma}d\sigma,\\
&\delta_{x}z_{j}^{\tau}=\sum_{k=0}^{N-1}D_{jk}z_{k}^{\tau},\\
&M\delta_{t}z_{j}^{\tau}+K\delta_{x}z_{j}^{\tau}=\nabla_{z}S(z_{j}^{\tau},x_{j}),\\
\end{aligned}\right.
\end{equation}
{for $j=0,1,\ldots,N-1$,} where $z_{j}^{\tau}\approx
z(x_{j},t_{0}+\tau\Delta
t),\delta_{t}z_{j}^{\tau}\approx\partial_{t}z(x_{j},t_{0}+\tau\Delta
t)$ are polynomials in $\tau$. For the energy-preserving property of
the CRK method, we expect this scheme to preserve some discrete
ECLs. Firstly, note that $b_{i}=\int_{0}^{1}l_{i}(\tau)d\tau$. For
convenience, we denote
$$\frac{1}{b_{i}}\int_{0}^{1}l_{i}(\tau)f(\tau)d\tau$$
(i.e. the weighted average of a function $f$ with the weight
function $l_{i}(\tau)$) as $\langle f\rangle_{i}$ in the remainder
of our paper. Obviously, $\langle\cdot\rangle_{i}$ {is a linear
operator.}

The next theorem shows the $N$-discrete local energy conservation law
of \eqref{S}.

\begin{mytheo}\label{DECL}
{The} scheme \eqref{S} exactly conserves the $N$-discrete local
energy conservation law:

\begin{equation}\label{13}
\frac{E_{j}^{1}-E_{j}^{0}}{\Delta t}+\sum_{k=0}^{N-1}D_{jk}\bar{F}_{jk}=0,
\end{equation}
for $j=0,1,\ldots,N-1,$ where
\begin{equation*}
\begin{aligned}
&E_{j}^{\alpha}=S(z_{j}^{\alpha},x_{j})-\frac{1}{2}z_{j}^{\alpha\intercal}K\delta_{x}z_{j}^{\alpha},\alpha=0,1,\\
&\bar{F}_{jk}=\frac{1}{2}\sum_{i=1}^{s}b_{i}(\langle z_{j}\rangle_{i}^{\intercal}K\langle\delta_{t}z_{k}\rangle_{i}
+\langle z_{k}\rangle_{i}^{\intercal}K\langle\delta_{t}z_{j}\rangle_{i}).\\
\end{aligned}
\end{equation*}
\end{mytheo}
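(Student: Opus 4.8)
The plan is to imitate, at the fully discrete level, the derivation of the semi-discrete ECL \eqref{sd_ECLs}: there one contracts \eqref{SDS} with $\frac{d}{dt}z_j^\intercal$ and discards the $M$-term by skew-symmetry. Here I will instead contract the last equation of \eqref{S} against the time-derivative polynomial $\delta_t z_j^\tau$, integrate in $\tau$ over $[0,1]$, and exploit the energy-preserving structure of the CRK method exactly as in Hairer's Hamiltonian argument. The indispensable preliminary identity is obtained by differentiating the first stage equation of \eqref{S} in $\tau$: since $A_{\tau,\sigma}=\sum_i b_i^{-1}\int_0^\tau l_i(\alpha)d\alpha\, l_i(\sigma)$, one has $\partial_\tau A_{\tau,\sigma}=\sum_i b_i^{-1} l_i(\tau)l_i(\sigma)$, and hence
$$\frac{d}{d\tau}z_j^\tau=\Delta t\sum_{i=1}^s l_i(\tau)\langle\delta_t z_j\rangle_i.$$
This is the discrete chain rule and is precisely what makes the weighted averages $\langle\cdot\rangle_i$ surface in $\bar F_{jk}$.

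Next I would split $E_j^\alpha$ into its potential and quadratic parts. For the potential part, the fundamental theorem of calculus gives $S(z_j^1,x_j)-S(z_j^0,x_j)=\int_0^1(\frac{d}{d\tau}z_j^\tau)^\intercal\nabla_z S(z_j^\tau,x_j)\,d\tau$; inserting the identity above and using $\int_0^1 l_i(\tau)\nabla_z S\,d\tau=b_i\langle\nabla_z S\rangle_i$ yields $\Delta t\sum_i b_i\langle\delta_t z_j\rangle_i^\intercal\langle\nabla_z S(z_j,x_j)\rangle_i$. Applying the linear operator $\langle\cdot\rangle_i$ to the last equation of \eqref{S} gives $\langle\nabla_z S\rangle_i=M\langle\delta_t z_j\rangle_i+K\langle\delta_x z_j\rangle_i$; the $M$-term dies by skew-symmetry of $M$, and substituting $\langle\delta_x z_j\rangle_i=\sum_k D_{jk}\langle z_k\rangle_i$ leaves
$$S(z_j^1,x_j)-S(z_j^0,x_j)=\Delta t\sum_k D_{jk}\sum_{i=1}^s b_i\langle\delta_t z_j\rangle_i^\intercal K\langle z_k\rangle_i.$$

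For the quadratic part I write $z_j^{\tau\intercal}K\delta_x z_j^\tau=\sum_k D_{jk}\,z_j^{\tau\intercal}Kz_k^\tau$, differentiate the product $z_j^{\tau\intercal}Kz_k^\tau$ in $\tau$ applying the stage-derivative identity to both $z_j^\tau$ and $z_k^\tau$, and integrate over $[0,1]$ to obtain
$$\tfrac12\big(z_j^{1\intercal}K\delta_x z_j^1-z_j^{0\intercal}K\delta_x z_j^0\big)=\frac{\Delta t}{2}\sum_k D_{jk}\sum_{i=1}^s b_i\big(\langle\delta_t z_j\rangle_i^\intercal K\langle z_k\rangle_i+\langle z_j\rangle_i^\intercal K\langle\delta_t z_k\rangle_i\big).$$
Subtracting this from the potential part and dividing by $\Delta t$ leaves $\tfrac12\sum_k D_{jk}\sum_i b_i(\langle\delta_t z_j\rangle_i^\intercal K\langle z_k\rangle_i-\langle z_j\rangle_i^\intercal K\langle\delta_t z_k\rangle_i)$. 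The final step uses skew-symmetry of $K$: since each summand is a scalar, $\langle\delta_t z_j\rangle_i^\intercal K\langle z_k\rangle_i=-\langle z_k\rangle_i^\intercal K\langle\delta_t z_j\rangle_i$, so the bracket becomes $-(\langle z_k\rangle_i^\intercal K\langle\delta_t z_j\rangle_i+\langle z_j\rangle_i^\intercal K\langle\delta_t z_k\rangle_i)$ and $\tfrac12\sum_i b_i$ of it collapses to $-\bar F_{jk}$. Hence $\frac{E_j^1-E_j^0}{\Delta t}=-\sum_k D_{jk}\bar F_{jk}$, which is \eqref{13}.

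The calculus itself is routine; the main obstacle is the bookkeeping around the operator $\langle\cdot\rangle_i$ — in particular establishing the stage-derivative identity correctly and tracking which quantities are weighted averages versus pointwise values, so that the two skew-symmetry cancellations ($M$ killing the kinetic contribution, $K$ symmetrizing the flux) align with the exact definition of $\bar F_{jk}$. I would also note that skew-symmetry of $D$ is not required anywhere; only the skew-symmetry of $M$ and $K$ together with the CRK structure drive the conservation.
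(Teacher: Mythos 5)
Your proposal is correct and follows essentially the same route as the paper's own proof: the same stage-derivative identity $\partial_\tau z_j^\tau=\Delta t\sum_i l_i(\tau)\langle\delta_t z_j\rangle_i$, the fundamental theorem of calculus applied to $S$ and to the quadratic term, substitution of the scheme's equation $\nabla_z S=M\delta_t z_j+K\delta_x z_j$, and the two skew-symmetry cancellations ($M$ annihilating the kinetic term, $K$ symmetrizing the flux); your only cosmetic deviations are expanding $\delta_x$ via $D_{jk}$ before rather than after the product rule, and performing the $M$-cancellation inside the potential-part computation instead of at the recombination step. Your closing observation that skew-symmetry of $D$ is not needed for the local law (only for the subsequent global conservation \eqref{GEL}) is also accurate and consistent with the paper.
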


By summing the identities \eqref{13} from $j=0$ to $N-1,$ on
noticing that $\bar{F}_{jk}$ is symmetric with respect to $j,k$ and
$D_{jk}$ is anti-symmetric with respect to $j,k,$ the discrete ECLs
lead to the global energy conservation:
\begin{equation}\label{GEL}
\Delta x\sum_{j=0}^{N-1}E_{j}^{1}-\Delta x\sum_{j=0}^{N-1}E_{j}^{0}=-\Delta x\Delta t\sum_{j=0}^{N-1}\sum_{k=0}^{N-1}D_{jk}\bar{F}_{jk}=0.
\end{equation}
If we evaluate the integrals of $\bar{F}_{jk}$ by the generating
quadrature formula of the CRK method, we have
\begin{equation*}
\bar{F}_{jk}\approx\frac{1}{2}\sum_{i=1}^{s}b_{i}(z_{j}^{c_{i}\intercal}K\delta_{t}z_{k}^{c_{i}}+z_{k}^{c_{i}\intercal}K\delta_{t}z_{j}^{c_{i}}).
\end{equation*}

\begin{proof}
First of all, note that the discrete differential
operator $\delta_{x}$ is linear, thus it holds that
\begin{equation}
\partial_{\tau}\delta_{x}z_{j}^{\tau}=\delta_{x}\partial_{\tau}z_{j}^{\tau}.
\end{equation}
It follows from \eqref{S} that
\begin{equation}\begin{aligned}
&\partial_{\tau}z_{j}^{\tau}=\Delta
t\int_{0}^{1}\sum_{i=1}^{s}\frac{1}{b_{i}}l_{i}(\tau)l_{i}(\sigma)\delta_{t}z_{j}^{\sigma}d\sigma=\Delta
t\sum_{i=1}^{s}l_{i}(\tau)\langle\delta_{t}z_{j}\rangle_{i}.
\end{aligned}\end{equation}
Then we have
\begin{equation}\label{18}
\begin{aligned}
&S(z_{j}^{1},x_{j})-S(z_{j}^{0},x_{j})\\
&=\int_{0}^{1}\partial_{\tau}z_{j}^{\tau\intercal}\nabla_{z}S(z_{j}^{\tau},x_{j})d\tau
=\Delta t\sum_{i=1}^{s}b_{i}\langle\delta_{t}z_{j}\rangle_{i}^{\intercal}\langle\nabla_{z}S_{j}\rangle_{i},\\
\end{aligned}
\end{equation}

\begin{equation}\label{19}
\begin{aligned}
&z_{j}^{1}K\delta_{x}z_{j}^{1}-z_{j}^{0}K\delta_{x}z_{j}^{0}\\
&=\int_{0}^{1}\partial_{\tau}(z_{j}^{\tau\intercal}K\delta_{x}z_{j}^{\tau})d\tau=
\int_{0}^{1}(\partial_{\tau}z_{j}^{\tau\intercal}K\delta_{x}z_{j}^{\tau}+z_{j}^{\tau\intercal}K\delta_{x}\partial_{\tau}z_{j}^{\tau})d\tau\\
&=\Delta t\sum_{i=1}^{s}b_{i}\langle\delta_{t}z_{j}\rangle_{i}^{\intercal}K\langle\delta_{x}z_{j}\rangle_{i}
+\Delta t\sum_{i=1}^{s}b_{i}\langle z_{j}\rangle_{i}^{\intercal}K\langle\delta_{x}\delta_{t}z_{j}\rangle_{i}.\\
\end{aligned}
\end{equation}
With \eqref{18} and \eqref{19}, it follows from
$$\mathbf{a}^{\intercal}M\mathbf{a}=0,\quad \mathbf{a}^{\intercal}M\mathbf{b}=-\mathbf{b}^{\intercal}M\mathbf{a}$$
that, for  $\mathbf{a},\mathbf{b}\in \mathbb{R}^d,$ we have
\begin{equation}
\begin{aligned}
&(E_{j}^{1}-E_{j}^{0})/\Delta t\\
&=(S(z_{j}^{1},x_{j})-S_(z_{j}^{0},x_{j})-\frac{1}{2}(z_{j}^{1\intercal}K\delta_{x}z_{j}^{1}-z_{j}^{0\intercal}K\delta_{x}z_{j}^{0}))/\Delta t\\
&=\sum_{i=1}^{s}b_{i}\langle\delta_{t}z_{j}\rangle_{i}^{\intercal}\langle M\delta_{t}z_{j}+K\delta_{x}z_{j}\rangle_{i}
-\frac{1}{2}\sum_{i=1}^{s}b_{i}\langle\delta_{t}z_{j}\rangle_{i}^{\intercal}K\langle\delta_{x}z_{j}\rangle_{i}
-\frac{1}{2}\sum_{i=1}^{s}b_{i}\langle z_{j}\rangle_{i}^{\intercal}K\langle\delta_{x}\delta_{t}z_{j}\rangle_{i}\\
&=\frac{1}{2}\sum_{i=1}^{s}b_{i}\langle\delta_{t}z_{j}\rangle_{i}^{\intercal}K\langle\delta_{x}z_{j}\rangle_{i}
-\frac{1}{2}\sum_{i=1}^{s}b_{i}\langle z_{j}\rangle_{i}^{\intercal}K\langle\delta_{x}\delta_{t}z_{j}\rangle_{i}\\
&=\frac{1}{2}\sum_{k=0}^{N-1}D_{jk}\sum_{i=1}^{s}b_{i}\langle\delta_{t}z_{j}\rangle_{i}^{\intercal}K\langle z_{k}\rangle_{i}
-\frac{1}{2}\sum_{k=0}^{N-1}D_{jk}\sum_{i=1}^{s}b_{i}\langle z_{j}\rangle_{i}^{\intercal}K\langle\delta_{t}z_{k}\rangle_{i}\\
&=-\sum_{k=0}^{N-1}D_{jk}\bar{F}_{jk}\\
\end{aligned}
\end{equation}\qed

\end{proof}

Note that a crucial property of the pseudospectral method is
replacing the operator $\partial_{x}$ with a linear and
skew-symmetric differential matrix. Fortunately, this property is
shared by spectral methods and wavelet collocation methods, hence
our procedure of constructing the local energy-preserving scheme can
be applied to them without any trouble.

Our approach can also be easily generalized to high dimensional
problems. For example, we consider the following equation {\color{blue}:}
\begin{equation}\label{HDE}
Mz_{t}+Kz_{x}+Lz_{y}=\nabla_{z}S(z,x,y).
\end{equation}
The ECL of this equation is:
\begin{equation}\label{2DECL}
\partial_{t}E+\partial_{x}F+\partial_{y}G=0,
\end{equation}
where
$$E=S(z,x,y)-\frac{1}{2}z^{\intercal}Kz_{x}-\frac{1}{2}z^{\intercal}Lz_{y},\quad F=\frac{1}{2}z^{\intercal}Kz_{t},\quad G=\frac{1}{2}z^{\intercal}Lz_{t}.$$

Applying a CRK method to {$t$-direction} and a pseudospectral method
to $x$ and $y$ directions ( under the periodic
boundary condition
$z(x_{0},y,t)=z(x_{0}+L_{1},y,t),z(x,y_{0},t)=z(x,y_{0}+L_{2},t)$ )
gives the following full discrete scheme:
\begin{equation}\label{FDS}
\left\{\begin{aligned}
&z_{jl}^{\tau}=z_{jl}^{0}+\Delta t\int_{0}^{1}A_{\tau,\sigma}\delta_{t}z_{jl}^{\sigma}d\sigma,\\
&z_{jl}^{1}=z_{jl}^{0}+\Delta t\int_{0}^{1}\delta_{t}z_{jl}^{\sigma}d\sigma,\\
&\delta_{x}z_{jl}^{\tau}=\sum_{k=0}^{N-1}(D_{x})_{jk}z_{kl}^{\tau},\\
&\delta_{y}z_{jl}^{\tau}=\sum_{m=0}^{M-1}(D_{y})_{lm}z_{jm}^{\tau},\\
&M\delta_{t}z_{jl}^{\tau}+K\delta_{x}z_{jl}^{\tau}+L\delta_{y}z_{jl}^{\tau}=\nabla_{z}S(z_{jl}^{\tau},x_{j},y_{l}),\\
\end{aligned}\right.
\end{equation}
{for $j=0,1,\ldots,N-1,\quad l=0,1,\ldots,M-1$}, where
$z_{jl}^{\tau}\approx z(x_{j},y_{l},t_{0}+\tau\Delta t),\
\delta_{t}z_{jl}^{\tau}\approx
\partial_{t}z(x_{j},y_{l},t_{0}+\tau\Delta
t)$ are polynomials in $\tau,\ x_{j}=x_{0}+j\Delta x,\
y_{l}=y_{0}+l\Delta y,\Delta x=\frac{L_{1}}{N}, \Delta
y=\frac{L_{2}}{M}.$ Both $D_{x}$ and $D_{y}$ are pseudospectral
differential matrices related to $x$ and $y$ directions
respectively.

The next theorem presents the discrete local energy conservation laws
of \eqref{FDS}.

\begin{mytheo}\label{HDECL}
The scheme \eqref{FDS} exactly conserves the $NM$-discrete local
energy conservation law:
\begin{equation}\label{2DDECL}
\frac{E_{jl}^{1}-E_{jl}^{0}}{\Delta t}+\sum_{k=0}^{N-1}(D_{x})_{jk}\bar{F}_{jk,l}+\sum_{m=0}^{M-1}(D_{y})_{lm}\bar{G}_{j,lm}=0,
\end{equation}

for $j=0,1,\ldots,N-1,\quad l=0,1,\ldots,M-1,$ where
\begin{equation*}
\begin{aligned}
&E_{j}^{\alpha}=S(z_{jl}^{\alpha},x_{j},y_{l})-\frac{1}{2}z_{jl}^{\alpha\intercal}K\delta_{x}z_{jl}^{\alpha}-
\frac{1}{2}z_{jl}^{\alpha\intercal}L\delta_{y}z_{jl}^{\alpha},\alpha=0,1,\\
&\bar{F}_{jk,l}=\frac{1}{2}\sum_{i=1}^{s}b_{i}(\langle z_{jl}\rangle_{i}^{\intercal}K\langle\delta_{t}z_{kl}\rangle_{i}
+\langle z_{kl}\rangle_{i}^{\intercal}K\langle\delta_{t}z_{jl}\rangle_{i}),\\
&\bar{G}_{j,lm}=\frac{1}{2}\sum_{i=1}^{s}b_{i}(\langle z_{jl}\rangle_{i}^{\intercal}L\langle\delta_{t}z_{jm}\rangle_{i}
+\langle z_{jm}\rangle_{i}^{\intercal}L\langle\delta_{t}z_{jl}\rangle_{i}).\\
\end{aligned}
\end{equation*}
\end{mytheo}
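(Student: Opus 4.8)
The plan is to mimic the proof of Theorem \ref{DECL} almost verbatim, treating the new $y$-direction flux term on exactly the same footing as the $x$-direction term. The two discrete differential operators $\delta_x$ and $\delta_y$ are both linear (they are contractions of the constant matrices $D_x$ and $D_y$ against different spatial indices), so each commutes with $\partial_\tau$ and with the weighted-average operator $\langle\cdot\rangle_i$. Beyond the skew-symmetry of $M$, $K$ and $L$, this is the only structural fact needed.

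First I would record, as in the one-dimensional case, the identity $\partial_\tau z_{jl}^\tau = \Delta t\sum_{i=1}^s l_i(\tau)\langle\delta_t z_{jl}\rangle_i$ coming from \eqref{FDS} and the explicit form of $A_{\tau,\sigma}$, together with the commutation relations $\partial_\tau\delta_x z_{jl}^\tau = \delta_x\partial_\tau z_{jl}^\tau$ and $\partial_\tau\delta_y z_{jl}^\tau = \delta_y\partial_\tau z_{jl}^\tau$. Then I would split $(E_{jl}^1-E_{jl}^0)/\Delta t$ into three pieces, writing each as an integral over $\tau\in[0,1]$: the potential part $S(z_{jl}^1,x_j,y_l) - S(z_{jl}^0,x_j,y_l) = \int_0^1 \partial_\tau z_{jl}^{\tau\intercal}\nabla_z S\,d\tau$, and the two quadratic flux parts $z_{jl}^{\intercal}K\delta_x z_{jl}$ and $z_{jl}^{\intercal}L\delta_y z_{jl}$ evaluated by the product rule exactly as in \eqref{19}.

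Next I would substitute the scheme's defining relation $\nabla_z S = M\delta_t z_{jl}^\tau + K\delta_x z_{jl}^\tau + L\delta_y z_{jl}^\tau$ into the potential term. The quadratic-in-$\delta_t$ contribution $\langle\delta_t z_{jl}\rangle_i^{\intercal}M\langle\delta_t z_{jl}\rangle_i$ vanishes by skew-symmetry of $M$ (using that $\langle\cdot\rangle_i$ commutes with the constant matrix $M$). The surviving $K$ terms combine with the $x$-flux difference to leave $\tfrac{1}{2}\sum_i b_i\langle\delta_t z_{jl}\rangle_i^{\intercal}K\langle\delta_x z_{jl}\rangle_i - \tfrac{1}{2}\sum_i b_i\langle z_{jl}\rangle_i^{\intercal}K\langle\delta_x\delta_t z_{jl}\rangle_i$, and identically the $L$ terms combine with the $y$-flux difference to leave the analogous expression with $K,\delta_x$ replaced by $L,\delta_y$. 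Finally I would push $\langle\cdot\rangle_i$ through $\delta_x$ and $\delta_y$ to turn these into $\sum_k (D_x)_{jk}$ and $\sum_m (D_y)_{lm}$ sums, and use skew-symmetry of $K$ and $L$ to symmetrize, recognizing the two results as $-\sum_k (D_x)_{jk}\bar{F}_{jk,l}$ and $-\sum_m (D_y)_{lm}\bar{G}_{j,lm}$.

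There is no genuine obstacle here; the computation is a routine two-directional copy of the existing one. The only point requiring care is bookkeeping: $\delta_x$ contracts $D_x$ against the first spatial index (producing the $k$-sum in $\bar{F}_{jk,l}$) while $\delta_y$ contracts $D_y$ against the second index (producing the $m$-sum in $\bar{G}_{j,lm}$), so the cross terms must be kept strictly separate and the symmetrization performed independently in each direction. One should also verify that the final signs match \eqref{2DDECL}, which follows from the same skew-symmetry rearrangement $\langle\delta_t z_{jl}\rangle_i^{\intercal}K\langle z_{kl}\rangle_i = -\langle z_{kl}\rangle_i^{\intercal}K\langle\delta_t z_{jl}\rangle_i$ (and its $L$-analogue) used in the one-dimensional proof.
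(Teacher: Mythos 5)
Your proposal is correct and is exactly the route the paper intends: the paper omits this proof, remarking only that it is ``very similar to that of Theorem \ref{DECL},'' and your argument is precisely that two-directional adaptation---substituting $\nabla_z S = M\delta_t z_{jl} + K\delta_x z_{jl} + L\delta_y z_{jl}$, killing the $M$ term by skew-symmetry, pairing the $K$ terms with the $x$-flux difference and the $L$ terms with the $y$-flux difference, and symmetrizing each directional sum separately. Your bookkeeping point (that $\delta_x$ contracts $D_x$ against the first spatial index and $\delta_y$ contracts $D_y$ against the second, so the two flux sums never mix) is exactly the only new detail beyond Theorem \ref{DECL}, and you handle it correctly.
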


Since the proof of Theorem 3.2 is very similar to that of Theorem
3.1, we omit the details here.

Summing the {identities} \eqref{2DDECL} over all space grid points,
on noticing that $\bar{F}_{jk,l}$ is symmetric with respect to
$j,k$, and $(D_{x})_{jk}$ is anti-symmetric with respect to $j,k,$
$\bar{G}_{j,lm}$ is symmetric with respect to $l,m$, and
$(D_{y})_{lm}$ is anti-symmetric with respect to $l,m,$ again, we
obtain the global energy conservation :
\begin{equation}
\Delta x\Delta y\sum_{j=0}^{N-1}\sum_{l=0}^{M-1}E_{jl}^{1}-\Delta x\Delta y\sum_{j=0}^{N-1}\sum_{l=0}^{M-1}E_{jl}^{0}
=-\Delta t\Delta x\Delta y\sum_{j=0}^{N-1}\sum_{l=0}^{M-1}\sum_{k=0}^{N-1}(D_{x})_{jk}\bar{F}_{jk,l}-\Delta t\Delta x\Delta y\sum_{j=0}^{N-1}\sum_{l=0}^{M-1}\sum_{m=0}^{M-1}(D_{y})_{lm}\bar{G}_{j,lm}=0.
\end{equation}

\subsection{Gauss-Legendre collocation spatial discretization}
In multi-symplectic algorithms, another class of methods
{frequently} applied to spatial discretization is the Gauss-Legendre
(GL) collocation method. We assume that the Butcher tableau of the
GL method is:
\begin{equation}\label{GL}
\begin{tabular}
[c]{c|ccc}%
$\tilde{c}_{1}$ & $\tilde{a}_{11}$ & $\ldots$ & $\tilde{a}_{1r}$\\
$\vdots$ & $\vdots$ & $\ddots$ & $\vdots$\\
$\tilde{c}_{r}$ & $\tilde{a}_{r1}$ & $\cdots$ & $\tilde{a}_{rr}$\\\hline &
$\raisebox{-1.3ex}[0.5pt]{$\tilde{b}_{1}$}$ &
\raisebox{-1.3ex}{$\cdots$} &
$\raisebox{-1.3ex}[0.5pt]{$\tilde{b}_{r}$}$\\
\end{tabular}
\end{equation}
After {the} spatial discretization using the GL method \eqref{GL}
and {the} temporal discretization using the CRK, we obtain the full
discrete scheme of \eqref{ONEDIM} :
\begin{equation}\label{FDSCHEME}
\left\{\begin{aligned}
&z_{n,j}^{\tau}=z_{n,j}^{0}+\Delta t\int_{0}^{1}A_{\tau,\sigma}\delta_{t}z_{n,j}^{\sigma}d\sigma,\\
&z_{n,j}^{1}=z_{n,j}^{0}+\Delta t\int_{0}^{1}\delta_{t}z_{n,j}^{\sigma}d\sigma,\\
&z_{n,j}^{\tau}=z_{n}^{\tau}+\Delta x\sum_{k=1}^{r}\tilde{a}_{jk}\delta_{x}z_{n,k}^{\tau},\\
&z_{n+1}^{\tau}=z_{n}^{\tau}+\Delta x\sum_{j=1}^{r}\tilde{b}_{j}\delta_{x}z_{n,j}^{\tau},\\
&M\delta_{t}z_{n,j}^{\tau}+K\delta_{x}z_{n,j}^{\tau}=\nabla_{z}S(z_{n,j}^{\tau},x_{n}+\tilde{c}_{j}\Delta x),\\
\end{aligned}\right.
\end{equation}
for $j=1,2,\ldots,r,$ where $z_{n}^{\tau}\approx
z(x_{n},t_{0}+\tau\Delta t),z_{n+1}^{\tau}\approx z(x_{n}+\Delta
x,t_{0}+\tau\Delta t), z_{n,j}^{\tau}\approx
z(x_{n}+\tilde{c}_{j}\Delta x,t_{0}+\tau\Delta t),
\delta_{t}z_{n,j}^{\tau}\approx
\partial_{t}z(x_{n}+\tilde{c}_{j}\Delta x,t_{0}+\tau\Delta t),
\delta_{x}z_{n,j}^{\tau}\approx
\partial_{x}z(x_{n}+\tilde{c}_{j}\Delta x,t_{0}+\tau\Delta
t)$ are polynomials in $\tau.$
This is a local scheme on the box $[x_{n},x_{n}+\Delta
x]\times[t_{0},t_{0}+\Delta t]$.
To show that \eqref{FDSCHEME}
exactly conserves the discrete ECL, we should make sure that there
is some law of commutation between $\delta_{t}$ and $\delta_{x}$. To this
end we introduce the following auxiliary system:
\begin{equation}
\begin{aligned}
&\delta_{x}z_{n,j}^{\tau}=\delta_{x}z_{n,j}^{0}+\Delta t\int_{0}^{1}A_{\tau,\sigma}\delta_{t}\delta_{x}z_{n,j}^{\sigma}d\sigma,\\
&\delta_{x}z_{n,j}^{1}=\delta_{x}z_{n,j}^{0}+\Delta t\int_{0}^{1}\delta_{t}\delta_{x}z_{n,j}^{\sigma}d\sigma,\\
&\delta_{t}z_{n,j}^{\tau}=\delta_{t}z_{n}^{\tau}+\Delta x\sum_{k=1}^{r}\tilde{a}_{jk}\delta_{x}\delta_{t}z_{n,k}^{\tau},\\
&\delta_{t}z_{n+1}^{\tau}=\delta_{t}z_{n}^{\tau}+\Delta x\sum_{j=1}^{r}\tilde{b}_{j}\delta_{x}\delta_{t}z_{n,j}^{\tau},\\
\end{aligned}
\end{equation}
for $j=1,2,\ldots,r,$ where
$\delta_{t}\delta_{x}z_{n,j}^{\sigma}\approx\partial_{t}\partial_{x}z(x_{n}+\tilde{c}_{j}\Delta
x,t_{0}+\sigma
t),\delta_{x}\delta_{t}z_{n,j}^{\sigma}\approx\partial_{x}\partial_{t}z(x_{n}+\tilde{c}_{j}\Delta
x,t_{0}+\sigma t)$. Then
\begin{equation}\label{29}
\begin{aligned}
z_{n,j}^{\tau}
&=z_{n,j}^{0}+\Delta t\int_{0}^{1}A_{\tau,\sigma}\delta_{t}z_{n,j}^{\sigma}d\sigma\\
&=z_{n,j}^{0}+\Delta t\int_{0}^{1}A_{\tau,\sigma}(\delta_{t}z_{n}^{\sigma}+\Delta x\sum_{k=1}^{r}\tilde{a}_{jk}\delta_{x}\delta_{t}z_{n,k}^{\sigma})d\sigma\\
&=z_{n,j}^{0}+z_{n}^{\tau}-z_{n}^{0}+\Delta t\Delta x\sum_{k=1}^{r}\tilde{a}_{jk}\int_{0}^{1}A_{\tau,\sigma}\delta_{x}\delta_{t}z_{n,k}^{\sigma}d\sigma.\\
\end{aligned}
\end{equation}
Likewise,
\begin{equation}\label{30}
\begin{aligned}
z_{n,j}^{\tau}
&=z_{n}^{\tau}+z_{n,j}^{0}-z_{n}^{0}+\Delta x\Delta t\sum_{k=1}^{r}\tilde{a}_{jk}\int_{0}^{1}A_{\tau,\sigma}\delta_{t}\delta_{x}z_{n,k}^{\sigma}d\sigma.\\
\end{aligned}
\end{equation}
\eqref{29}, \eqref{30} lead to
\begin{equation*}
\begin{aligned}
&\sum_{k=1}^{r}\tilde{a}_{jk}\int_{0}^{1}A_{\tau,\sigma}\delta_{x}\delta_{t}z_{n,k}^{\sigma}d\sigma=
\sum_{k=1}^{r}\tilde{a}_{jk}\int_{0}^{1}A_{\tau,\sigma}\delta_{t}\delta_{x}z_{n,k}^{\sigma}d\sigma.\\
\end{aligned}
\end{equation*}
Since the matrix $(\tilde{a}_{jk})_{1\leq j,\ k\leq r}$ is invertible, we have
\begin{equation}\label{32}
\int_{0}^{1}A_{\tau,\sigma}\delta_{x}\delta_{t}z_{n,k}^{\sigma}d\sigma=\int_{0}^{1}A_{\tau,\sigma}\delta_{t}\delta_{x}z_{n,k}^{\sigma}d\sigma,
\quad\tau\in[0,1],
\end{equation}
for $k=1,2,\ldots,r.$ Taking derivatives with respect to $\tau$ on
both sides of \eqref{32}, {we} arrive at
\begin{equation*}
\int_{0}^{1}(\sum_{i=1}^{s}\frac{1}{b_{i}}l_{i}(\tau)l_{i}(\sigma))\delta_{x}\delta_{t}z_{n,k}^{\sigma}d\sigma=
\int_{0}^{1}(\sum_{i=1}^{s}\frac{1}{b_{i}}l_{i}(\tau)l_{i}(\sigma))\delta_{t}\delta_{x}z_{n,k}^{\sigma}d\sigma,\quad
\tau\in[0,1],
\end{equation*}
for $ k=1,2,\ldots ,r$. Finally, setting
$\tau=c_{1},\ldots,c_{s},$ we have the following lemma:

\begin{lem}\label{3.3}
The following discrete commutability between $\delta_{t}$ and $\delta_{x}$ holds:
\begin{equation}
\langle\delta_{x}\delta_{t}z_{n,j}\rangle_{i}=
\langle\delta_{t}\delta_{x}z_{n,j}\rangle_{i},
\end{equation}
for $i=1,\ldots,s,\quad j=1,2,\ldots,r.$
\end{lem}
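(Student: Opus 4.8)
The plan is to obtain Lemma \ref{3.3} as the final collocation step applied to the integral identity \eqref{32}, which has already been extracted from the two representations \eqref{29} and \eqref{30} of $z_{n,j}^{\tau}$. The guiding idea is that the two mixed discrete second derivatives $\delta_{x}\delta_{t}z_{n,j}^{\tau}$ and $\delta_{t}\delta_{x}z_{n,j}^{\tau}$, though defined through two different auxiliary update systems, reconstruct the \emph{same} stage values once convolved against the kernel $A_{\tau,\sigma}$; the whole task is to convert this convolved equality into an equality of the weighted averages $\langle\cdot\rangle_{i}$.

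First I would record the consistency forced by having built $z_{n,j}^{\tau}$ in two ways: applying $\delta_{x}$ to the time-CRK stages and applying $\delta_{t}$ to the space-GL stages must yield the same polynomial, so that $\sum_{k=1}^{r}\tilde{a}_{jk}\int_{0}^{1}A_{\tau,\sigma}(\delta_{x}\delta_{t}z_{n,k}^{\sigma}-\delta_{t}\delta_{x}z_{n,k}^{\sigma})\,d\sigma=0$ for every $j$. Because the GL coefficient matrix $(\tilde{a}_{jk})_{1\leq j,k\leq r}$ is invertible, I cancel it to arrive at \eqref{32}, i.e. the integral-level commutation $\int_{0}^{1}A_{\tau,\sigma}\delta_{x}\delta_{t}z_{n,k}^{\sigma}\,d\sigma=\int_{0}^{1}A_{\tau,\sigma}\delta_{t}\delta_{x}z_{n,k}^{\sigma}\,d\sigma$, valid for all $\tau\in[0,1]$ and each $k$.

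Next I would differentiate this identity in $\tau$. From the defining formula in \eqref{CRK} one has $\partial_{\tau}A_{\tau,\sigma}=\sum_{i=1}^{s}\frac{1}{b_{i}}l_{i}(\tau)l_{i}(\sigma)$, so the kernel becomes the separable sum $\sum_{i=1}^{s}\frac{1}{b_{i}}l_{i}(\tau)l_{i}(\sigma)$. The decisive move is then to set $\tau=c_{m}$ and invoke the cardinality property $l_{i}(c_{m})=\delta_{im}$ of the Lagrange basis built on the nodes $c_{1},\ldots,c_{s}$; this collapses the $\tau$-dependence to the single surviving term $\frac{1}{b_{m}}l_{m}(\sigma)$. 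What remains is exactly $\frac{1}{b_{m}}\int_{0}^{1}l_{m}(\sigma)\delta_{x}\delta_{t}z_{n,k}^{\sigma}\,d\sigma=\frac{1}{b_{m}}\int_{0}^{1}l_{m}(\sigma)\delta_{t}\delta_{x}z_{n,k}^{\sigma}\,d\sigma$, which by the definition of $\langle\cdot\rangle_{m}$ is the claimed $\langle\delta_{x}\delta_{t}z_{n,k}\rangle_{m}=\langle\delta_{t}\delta_{x}z_{n,k}\rangle_{m}$. Relabelling $k\to j$ and $m\to i$ completes the argument.

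I expect the only genuinely delicate points to be the two injectivity arguments that allow the averaged identity to descend to node-level averages: the cancellation of $(\tilde{a}_{jk})$, which relies on its nonsingularity for the GL collocation tableau, and the passage from the single all-$\tau$ identity to the $s$ separate node identities, which hinges entirely on the interpolation property $l_{i}(c_{m})=\delta_{im}$. Everything else is linearity of $\langle\cdot\rangle_{i}$ together with the commutability $\partial_{\tau}\delta_{x}=\delta_{x}\partial_{\tau}$ already used above, so no further estimates or regularity hypotheses enter.
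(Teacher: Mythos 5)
Your proposal is correct and follows essentially the same route as the paper: equate the two representations \eqref{29} and \eqref{30} of $z_{n,j}^{\tau}$, cancel the invertible GL matrix $(\tilde{a}_{jk})$ to obtain \eqref{32}, differentiate in $\tau$ so the kernel becomes $\sum_{i=1}^{s}\frac{1}{b_{i}}l_{i}(\tau)l_{i}(\sigma)$, and evaluate at the nodes $c_{1},\ldots,c_{s}$. Your only addition is making explicit the cardinality property $l_{i}(c_{m})=\delta_{im}$ that the paper leaves implicit in its final step "setting $\tau=c_{1},\ldots,c_{s}$," which is a worthwhile clarification but not a different argument.
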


\begin{mytheo}
The scheme \eqref{FDSCHEME} conserves the following discrete local
energy conservation law :

\begin{equation}\label{33}
\Delta x\sum_{j=1}^{r}\tilde{b}_{j}(E_{n,j}^{1}-E_{n,j}^{0})+\Delta t(\bar{F}_{n+1}-\bar{F}_{n})=0,
\end{equation}
where
\begin{equation*}
\begin{aligned}
&E_{n,j}^{\alpha}=S(z_{n,j}^{\alpha},x_{n}+\tilde{c}_{j}\Delta x)-\frac{1}{2}z_{n,j}^{\alpha\intercal}K\delta_{x}z_{n,j}^{\alpha},\alpha=0,1,\\
&\bar{F}_{\beta}=\frac{1}{2}\sum_{i=1}^{s}b_{i}\langle z_{\beta}\rangle_{i}^{\intercal}K\langle\delta_{t}z_{\beta}\rangle_{i},\beta=n,n+1.\\
\end{aligned}
\end{equation*}
\end{mytheo}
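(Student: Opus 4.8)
The plan is to carry out the temporal reduction exactly as in the proof of Theorem~\ref{DECL}, and then to handle the spatial (Gauss--Legendre) part by combining Lemma~\ref{3.3} with the symplecticity condition of the collocation coefficients.

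First I would treat the time direction stage by stage. For each fixed $j$, using $\partial_{\tau}z_{n,j}^{\tau}=\Delta t\sum_{i=1}^{s}l_{i}(\tau)\langle\delta_{t}z_{n,j}\rangle_{i}$ and the analogous identity $\partial_{\tau}\delta_{x}z_{n,j}^{\tau}=\Delta t\sum_{i=1}^{s}l_{i}(\tau)\langle\delta_{t}\delta_{x}z_{n,j}\rangle_{i}$ read off from the auxiliary system, I would integrate $\partial_{\tau}S(z_{n,j}^{\tau},\cdot)$ and $\partial_{\tau}(z_{n,j}^{\tau\intercal}K\delta_{x}z_{n,j}^{\tau})$ over $\tau\in[0,1]$ as in \eqref{18}--\eqref{19}. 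Applying the operator $\langle\cdot\rangle_{i}$ to the last equation of \eqref{FDSCHEME} gives $\langle\nabla_{z}S_{n,j}\rangle_{i}=M\langle\delta_{t}z_{n,j}\rangle_{i}+K\langle\delta_{x}z_{n,j}\rangle_{i}$, and the skew-symmetry of $M$ annihilates the resulting quadratic $M$-term. Invoking Lemma~\ref{3.3} to replace $\langle\delta_{t}\delta_{x}z_{n,j}\rangle_{i}$ by $\langle\delta_{x}\delta_{t}z_{n,j}\rangle_{i}$, this reduces the local energy difference to
\[
\frac{E_{n,j}^{1}-E_{n,j}^{0}}{\Delta t}=\frac{1}{2}\sum_{i=1}^{s}b_{i}\bigl(\langle\delta_{t}z_{n,j}\rangle_{i}^{\intercal}K\langle\delta_{x}z_{n,j}\rangle_{i}-\langle z_{n,j}\rangle_{i}^{\intercal}K\langle\delta_{x}\delta_{t}z_{n,j}\rangle_{i}\bigr).
\]

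Next I would deal with the space direction. Abbreviating $u_{j}=\langle z_{n,j}\rangle_{i}$, $v_{j}=\langle\delta_{t}z_{n,j}\rangle_{i}$, $p_{j}=\langle\delta_{x}z_{n,j}\rangle_{i}$, $q_{j}=\langle\delta_{x}\delta_{t}z_{n,j}\rangle_{i}$, and similarly $u_{n},v_{n},u_{n+1},v_{n+1}$ at the cell endpoints, the $\langle\cdot\rangle_{i}$-averages of the collocation relations in \eqref{FDSCHEME} and of the auxiliary system read $u_{j}=u_{n}+\Delta x\sum_{k}\tilde{a}_{jk}p_{k}$, $v_{j}=v_{n}+\Delta x\sum_{k}\tilde{a}_{jk}q_{k}$, $u_{n+1}=u_{n}+\Delta x\sum_{j}\tilde{b}_{j}p_{j}$ and $v_{n+1}=v_{n}+\Delta x\sum_{j}\tilde{b}_{j}q_{j}$. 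Multiplying the displayed identity by $\Delta x\,\tilde{b}_{j}$ and summing over $j$, and noting $\bar{F}_{\beta}=\tfrac12\sum_{i}b_{i}u_{\beta}^{\intercal}Kv_{\beta}$, the claim \eqref{33} becomes equivalent to the purely algebraic identity
\[
\Delta x\sum_{j=1}^{r}\tilde{b}_{j}(v_{j}^{\intercal}Kp_{j}-u_{j}^{\intercal}Kq_{j})+u_{n+1}^{\intercal}Kv_{n+1}-u_{n}^{\intercal}Kv_{n}=0,
\]
to be established for each index $i$.

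Finally, I would verify this algebraic identity by substitution. Expanding $u_{n+1}^{\intercal}Kv_{n+1}$ and each $v_{j}^{\intercal}Kp_{j}$, $u_{j}^{\intercal}Kq_{j}$ through the relations above, the terms linear in $\Delta x$ cancel after using that $K$ is skew-symmetric (so $p_{j}^{\intercal}Kv_{n}=-v_{n}^{\intercal}Kp_{j}$ and the $u_{n}^{\intercal}Kq$-terms coincide), and the remaining quadratic part collects as $\Delta x^{2}\sum_{j,k}(\tilde{b}_{j}\tilde{b}_{k}-\tilde{b}_{j}\tilde{a}_{jk}-\tilde{b}_{k}\tilde{a}_{kj})\,p_{j}^{\intercal}Kq_{k}$. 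I expect this step to be the main obstacle: unlike the pseudospectral case of Theorem~\ref{DECL}, where $\delta_{x}$ is a single skew-symmetric matrix and one transposes directly, here the cancellation hinges on the symplecticity condition $\tilde{b}_{j}\tilde{a}_{jk}+\tilde{b}_{k}\tilde{a}_{kj}-\tilde{b}_{j}\tilde{b}_{k}=0$ satisfied by the Gauss--Legendre coefficients, which forces the quadratic remainder to vanish identically and completes the proof.
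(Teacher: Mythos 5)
Your proof is correct and follows essentially the same route as the paper: the temporal reduction mirrors Theorem~\ref{DECL}, the flux difference is expanded through the collocation relations, and the quadratic remainder with coefficient $\tilde{b}_{j}\tilde{b}_{k}-\tilde{b}_{j}\tilde{a}_{jk}-\tilde{b}_{k}\tilde{a}_{kj}$ is annihilated by the symplecticity of the Gauss--Legendre coefficients, exactly as in the paper's computation \eqref{l49}, with Lemma~\ref{3.3} supplying the needed commutation. The only cosmetic difference is ordering: you apply the averages $\langle\cdot\rangle_{i}$ before the spatial expansion and invoke Lemma~\ref{3.3} early, whereas the paper expands the polynomial stages first and commutes the discrete derivatives at the very end.
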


\begin{proof}
It follows from the first equation of \eqref{FDSCHEME} that,
\begin{equation}
\partial_{\tau}z_{n,j}^{\tau}=\Delta t\sum_{i=1}^{s}l_{i}(\tau)\langle\delta_{t}z_{n,j}\rangle_{i}.
\end{equation}

The result in the temporal direction is almost the same as the
pseudospectral case :
\begin{equation*}
\begin{aligned}
&S(z_{n,j}^{1},x_{n}+\tilde{c}_{j}\Delta x)-S(z_{n,j}^{0},x_{n}+\tilde{c}_{j}\Delta x)
=\Delta t\sum_{i=1}^{s}b_{i}\langle\delta_{t}z_{n,j}\rangle_{i}^{\intercal}\langle\nabla S_{n,j}\rangle_{i},\\
\end{aligned}
\end{equation*}

\begin{equation*}
\begin{aligned}
&z_{n,j}^{1}K\delta_{x}z_{n,j}^{1}-z_{n,j}^{0}K\delta_{x}z_{n,j}^{0}
=\Delta t\sum_{i=1}^{s}b_{i}\langle\delta_{t}z_{n,j}\rangle_{i}^{\intercal}K\langle\delta_{x}z_{n,j}\rangle_{i}
+\Delta t\sum_{i=1}^{s}b_{i}\langle z_{n,j}\rangle_{i}^{\intercal}K \langle\delta_{t}\delta_{x}z_{n,j}\rangle_{i}.\\
\end{aligned}
\end{equation*}

Hence
\begin{equation}\label{36}
\begin{aligned}
&E_{n,j}^{1}-E_{n,j}^{0}\\
&=S(z_{n,j}^{1},x_{n}+\tilde{c}_{j}\Delta x)-S(z_{n,j}^{0},x_{n}+\tilde{c}_{j}\Delta x)-\frac{1}{2}(z_{n,j}^{1}K\delta_{x}z_{n,j}^{1}-z_{n,j}^{0}K\delta_{x}z_{n,j}^{0})\\
&=\Delta t\sum_{i=1}^{s}b_{i}\langle\delta_{t}z_{n,j}\rangle_{i}^{\intercal}\langle M\delta_{t}z_{n,j}+K\delta_{x}z_{n,j}\rangle_{i}
-\frac{1}{2}\Delta t\sum_{i=1}^{s}b_{i}\langle\delta_{t}z_{n,j}\rangle_{i}^{\intercal}K\langle\delta_{x}z_{n,j}\rangle_{i}
-\frac{1}{2}\Delta t\sum_{i=1}^{s}b_{i}\langle z_{n,j}\rangle_{i}^{\intercal}K \langle\delta_{t}\delta_{x}z_{n,j}\rangle_{i}\\
&=\frac{1}{2}\Delta t\sum_{i=1}^{s}b_{i}\langle\delta_{t}z_{n,j}\rangle_{i}^{\intercal}K\langle\delta_{x}z_{n,j}\rangle_{i}
-\frac{1}{2}\Delta t\sum_{i=1}^{s}b_{i}\langle z_{n,j}\rangle_{i}^{\intercal}K\langle\delta_{t}\delta_{x}z_{n,j}\rangle_{i}.\\
\end{aligned}
\end{equation}
On the other hand,
\begin{equation}\label{l49}
\begin{aligned}
&z_{n+1}^{\tau\intercal}K\delta_{t}z_{n+1}^{\sigma}-z_{n}^{\tau\intercal}K\delta_{t}z_{n}^{\sigma}\\
&=(z_{n}^{\tau\intercal}+\Delta x\sum_{j=1}^{r}\tilde{b}_{j}\delta_{x}z_{n,j}^{\tau\intercal})K(\delta_{t}z_{n}^{\sigma}+\Delta x\sum_{j=1}^{r}\tilde{b}_{j}\delta_{x}\delta_{t}z_{n,j}^{\sigma})-z_{n}^{\tau\intercal}K\delta_{t}z_{n}^{\sigma}\\
&=\Delta x\sum_{j=1}^{r}\tilde{b}_{j}z_{n}^{\tau\intercal}K\delta_{x}\delta_{t}z_{n,j}^{\sigma}+\Delta x\sum_{j=1}^{r}\tilde{b}_{j}\delta_{x}z_{n,j}^{\tau\intercal}K\delta_{t}z_{n}^{\sigma}+
\Delta x^{2}\sum_{j,k=1}^{r}\tilde{b}_{j}\tilde{b}_{k}\delta_{x}z_{n,j}^{\tau\intercal}K\delta_{x}\delta_{t}z_{n,k}^{\sigma}\\
&=\Delta x\sum_{j=1}^{r}\tilde{b}_{j}(z_{n,j}^{\tau\intercal}-\Delta
x\sum_{k=1}^{r}\tilde{a}_{jk}\delta_{x}z_{n,k}^{\tau\intercal})K\delta_{x}\delta_{t}z_{n,j}^{\sigma}+\Delta
x\sum_{j=1}^{r}\tilde{b}_{j}\delta_{x}z_{n,j}^{\tau\intercal}K(\delta_{t}z_{n,j}^{\sigma}-\Delta
x\sum_{k=1}^{r}\tilde{a}_{jk}\delta_{x}\delta_{t}z_{n,k}^{\sigma})\\&\quad+
\Delta x^{2}\sum_{j,k=1}^{r}\tilde{b}_{j}\tilde{b}_{k}\delta_{x}z_{n,j}^{\tau\intercal}K\delta_{x}\delta_{t}z_{n,k}^{\sigma}\\
&=\Delta x\sum_{j=1}^{r}\tilde{b}_{j}z_{n,j}^{\tau\intercal}K\delta_{x}\delta_{t}z_{n,j}^{\sigma}+\Delta x\sum_{j=1}^{r}\tilde{b}_{j}\delta_{x}z_{n,j}^{\tau\intercal}K\delta_{t}z_{n,j}^{\sigma}+
\Delta x^{2}\sum_{j,k=1}^{r}(\tilde{b}_{j}\tilde{b}_{k}-\tilde{b}_{j}\tilde{a}_{jk}-\tilde{b}_{k}\tilde{a}_{kj})\delta_{x}z_{n,j}^{\tau\intercal}K\delta_{x}\delta_{t}z_{n,k}^{\sigma}\\
&=\Delta x\sum_{j=1}^{r}\tilde{b}_{j}z_{n,j}^{\tau\intercal}K\delta_{x}\delta_{t}z_{n,j}^{\sigma}+\Delta x\sum_{j=1}^{r}\tilde{b}_{j}\delta_{x}z_{n,j}^{\tau\intercal}K\delta_{t}z_{n,j}^{\sigma}.\\
\end{aligned}
\end{equation}
It follows from \eqref{l49} that
\begin{equation}\label{388}
\begin{aligned}
&\bar{F}_{n+1}-\bar{F}_{n}\\
&=\frac{1}{2}\sum_{i=1}^{s}b_{i}(\langle z_{n+1}\rangle_{i}^{\intercal}K\langle\delta_{t}z_{n+1}\rangle_{i}-
\langle z_{n}\rangle_{i}^{\intercal}K\langle\delta_{t}z_{n}\rangle_{i})\\
&=\frac{1}{2}\Delta x\sum_{j=1}^{r}\sum_{i=1}^{s}\tilde{b}_{j}b_{i}(\langle z_{n,j}\rangle_{i}^{\intercal} K\langle\delta_{x}\delta_{t}z_{n,j}\rangle_{i}+\langle\delta_{x}z_{n,j}\rangle_{i}^{\intercal}K\langle\delta_{t}z_{n,j}\rangle_{i})\\
\end{aligned}
\end{equation}
From \eqref{36} and \eqref{388}, using Lemma \ref{3.3},
we have
\begin{equation}
\begin{aligned}
&\Delta x\sum_{j=1}^{r}\tilde{b}_{j}(E_{n,j}^{1}-E_{n,j}^{0})+\Delta t(\bar{F}_{n+1}-\bar{F}_{n})\\
&=\frac{1}{2}\Delta t\Delta x\sum_{i=1}^{s}\sum_{j=1}^{r}\tilde{b}_{j}b_{i}(\langle\delta_{t}z_{n,j}\rangle_{i}^{\intercal}K\langle\delta_{x}z_{n,j}\rangle_{i}
-\langle z_{n,j}\rangle_{i}^{\intercal}K\langle\delta_{t}\delta_{x}z_{n,j}\rangle_{i}+\langle z_{n,j}\rangle_{i}^{\intercal} K\langle\delta_{x}\delta_{t}z_{n,j}\rangle_{i}+\langle\delta_{x}z_{n,j}\rangle_{i}^{\intercal}K\langle\delta_{t}z_{n,j}\rangle_{i})\\
&=\frac{1}{2}\Delta x\Delta t\sum_{j=1}^{r}\sum_{i=1}^{s}\tilde{b}_{j}b_{i}\langle z_{n,j}\rangle_{i}^{\intercal} K(\langle\delta_{x}\delta_{t}z_{n,j}\rangle_{i}-\langle\delta_{t}\delta_{x}z_{n,j}\rangle_{i})=0.\\
\end{aligned}
\end{equation}
\end{proof}\qed

Assume that the spatial domain is divided equally into $N$ intervals
and the corresponding grids are $x_{0},x_{1},\ldots,x_{N}$. By
summing the identities \eqref{33} from $n=0$ to $N-1$, we obtain the
global energy conservation of the scheme \eqref{FDSCHEME} under the
periodic boundary condition :
\begin{equation}\label{38}
\Delta x\sum_{n=0}^{N-1}\sum_{j=1}^{r}\tilde{b}_{j}E_{n,j}^{1}-\Delta x\sum_{n=0}^{N-1}\sum_{j=1}^{r}\tilde{b}_{j}E_{n,j}^{0}
=-\Delta t\sum_{n=0}^{N-1}(\bar{F}_{n+1}-\bar{F}_{n})=0,
\end{equation}

The GL spatial discretization is not restricted to the periodic
boundary condition  (PBC). Thus the discrete ECL \eqref{33} is
{\color{red}superior to} the discrete global energy conservation
\eqref{38}. However, discretizing space by high-order GL methods may
lead to singular and massive ODE systems which are expensive to
solve (see, e.g. {\cite{Maclachlan2014,Ryland2007}}). For this
reason, we will not include the scheme \eqref{FDSCHEME} in our
numerical experiments in Section \ref{Numer_Experi},
\ref{Numer_Experi2}.

\section{Local energy-preserving schemes for coupled nonlinear Schr\"{o}dinger
equations}\label{LECNS} An important class of multi-symplectic PDEs
is the (coupled) nonlinear Schr\"{o}dinger equation ((C)NLS). A
great number of them have polynomial nonlinear terms, hence we can
calculate the integrals exactly in our method  {(for example, by
symbol calculations)}. Here we summarily introduce the
multi-symplectic structure of the 2-coupled NLS:

\begin{equation}\label{2CNLS}
\left\{\begin{aligned}
&iu_{t}+i\alpha u_{x}+\frac{1}{2}u_{xx}+(|u|^{2}+\beta|v|^{2})u=0,\\
&iv_{t}-i\alpha v_{x}+\frac{1}{2}v_{xx}+(\beta|u|^{2}+|v|^{2})v=0,\\
\end{aligned}\right.
\end{equation}
where $u,v$ are complex variables,
$i\ $ is the imaginary unit.
Assuming $u=q_{1}+iq_{2}$ and $v=q_{3}+iq_{4},
\partial_{x}q_{i}=2p_{i},\ q_{i}$ are real variables for $i=1,2,3,4,$
we can formulate this equation to a multi-symplectic form (see, e.g.
{\cite{Chen2010}}):
\begin{equation*}
\begin{aligned}
&\left(\begin{array}{cc}\mathbf{J}_{1}&\mathbf{O}\\\mathbf{O}&\mathbf{O}\\\end{array}\right)
z_{t}+\left(\begin{array}{cc}
\mathbf{J}_{2}&\mathbf{I}\\ -\mathbf{I}&\mathbf{O}\\\end{array}\right)z_{x}
=\nabla S(z),\\
\end{aligned}
\end{equation*}
where
\begin{equation*}
\mathbf{J}_{1}=\left(\begin{array}{cccc}0&-1&0&0\\1&0&0&0\\0&0&0&-1\\0&0&1&0\\\end{array}\right),
\mathbf{J}_{2}=\left(\begin{array}{cccc}0&-\alpha&0&0\\ \alpha&0&0&0\\0&0&0&\alpha\\0&0&-\alpha&0\\\end{array}\right),
\mathbf{O}=\left(\begin{array}{cccc}0&0&0&0\\0&0&0&0\\0&0&0&0\\0&0&0&0\\\end{array}\right),
\mathbf{I}=\left(\begin{array}{cccc}1&0&0&0\\0&1&0&0\\0&0&1&0\\0&0&0&1\\\end{array}\right),
\end{equation*}
and
$$z=(q_{1},q_{2},q_{3},q_{4},p_{1},p_{2},p_{3},p_{4})^{\intercal},\quad S=-\frac{1}{4}(q_{1}^{2}+q_{2}^{2})^{2}-\frac{1}{4}(q_{3}^{2}+q_{4}^{2})^{2}-\frac{1}{2}\beta(q_{1}^{2}+q_{2}^{2})(q_{3}^{2}+q_{4}^{2})-
(p_{1}^{2}+p_{2}^{2}+p_{3}^{2}+p_{4}^{2}).$$
The corresponding
energy density $E$ and flux $F$ in the ECL \eqref{1DECL} are:

\begin{equation*}
\begin{aligned}
&E=S-\alpha(q_{2}p_{1}-q_{1}p_{2}+q_{3}p_{4}-q_{4}p_{3})-\frac{1}{2}\sum_{i=1}^{4}(q_{i}\partial_{x}p_{i}-2p_{i}^{2}),\\
&F=\frac{1}{2}(\alpha(q_{2}\partial_{t}q_{1}-q_{1}\partial_{t}q_{2}+q_{3}\partial_{t}q_{4}-q_{4}\partial_{t}q_{3})+
\sum_{i=1}^{4}(q_{i}\partial_{t}p_{i}-p_{i}\partial_{t}q_{i})).\\
\end{aligned}
\end{equation*}
The corresponding MCL of this equation is:
\begin{equation*}
\partial_{t}I+\partial_{x}G=0,
\end{equation*}
where
\begin{equation*}
\begin{aligned}
&I=q_{2}p_{1}-q_{1}p_{2}+q_{4}p_{3}-q_{3}p_{4},\\
&G=S-\frac{1}{2}(q_{2}\partial_{t}q_{1}-q_{1}\partial_{t}q_{2}+q_{4}\partial_{t}q_{3}-q_{3}\partial_{t}q_{4}).\\
\end{aligned}
\end{equation*}
Integrating the ECL and MCL with respect to the variable $x$ under the PBC leads to
the global energy and the momentum conservation:
\begin{equation*}
\int_{x_{0}}^{x_{0}+L}E(x,t)dx=\int_{x_{0}}^{x_{0}+L}E(x,0)dx,\quad \int_{x_{0}}^{x_{0}+L}I(x,t)dx=\int_{x_{0}}^{x_{0}+L}I(x,0)dx.
\end{equation*}
Besides, the global charges of $u$ and $v$ are constant under the PBC:
\begin{equation*}
\int_{x_{0}}^{x_{0}+L}|u(x,t)|^{2}dx=\int_{x_{0}}^{x_{0}+L}|u(x,0)|^{2}dx,\quad
\int_{x_{0}}^{x_{0}+L}|v(x,t)|^{2}dx=\int_{x_{0}}^{x_{0}+L}|v(x,0)|^{2}dx.
\end{equation*}

Applying our discrete procedure to the equation \eqref{2CNLS} gives
the following scheme in vector form:
\begin{equation}\label{DCNLS}
\left\{\begin{aligned}
&q_{1}^{\tau}=q_{1}^{0}+\Delta t\int_{0}^{1}A_{\tau,\sigma}(-\alpha Dq_{1}^{\sigma}-Dp_{2}^{\sigma}
-(((q_{1}^{\sigma})^{\cdot2}+(q_{2}^{\sigma})^{\cdot2})+\beta((q_{3}^{\sigma})^{\cdot2}+(q_{4}^{\sigma})^{\cdot2}))\cdot q_{2}^{\sigma})d\sigma,\\
&q_{2}^{\tau}=q_{2}^{0}+\Delta t\int_{0}^{1}A_{\tau,\sigma}(-\alpha Dq_{2}^{\sigma}+Dp_{1}^{\sigma}
+(((q_{1}^{\sigma})^{\cdot2}+(q_{2}^{\sigma})^{\cdot2})+\beta((q_{3}^{\sigma})^{\cdot2}+(q_{4}^{\sigma})^{\cdot2}))\cdot q_{1}^{\sigma})d\sigma,\\
&q_{3}^{\tau}=q_{3}^{0}+\Delta t\int_{0}^{1}A_{\tau,\sigma}(\alpha Dq_{3}^{\sigma}-Dp_{4}^{\sigma}
-(\beta((q_{1}^{\sigma})^{\cdot2}+(q_{2}^{\sigma})^{\cdot2})+((q_{3}^{\sigma})^{\cdot2}+(q_{4}^{\sigma})^{\cdot2}))\cdot q_{4}^{\sigma})d\sigma,\\
&q_{4}^{\tau}=q_{4}^{0}+\Delta t\int_{0}^{1}A_{\tau,\sigma}(\alpha Dq_{4}^{\sigma}+Dp_{3}^{\sigma}
+(\beta((q_{1}^{\sigma})^{\cdot2}+(q_{2}^{\sigma})^{\cdot2})+((q_{3}^{\sigma})^{\cdot2}+(q_{4}^{\sigma})^{\cdot2}))\cdot q_{3}^{\sigma})d\sigma,\\
&q_{1}^{1}=q_{1}^{0}+\Delta t\int_{0}^{1}(-\alpha Dq_{1}^{\sigma}-Dp_{2}^{\sigma}
-(((q_{1}^{\sigma})^{\cdot2}+(q_{2}^{\sigma})^{\cdot2})+\beta((q_{3}^{\sigma})^{\cdot2}+(q_{4}^{\sigma})^{\cdot2}))\cdot q_{2}^{\sigma})d\sigma,\\
&q_{2}^{1}=q_{2}^{0}+\Delta t\int_{0}^{1}(-\alpha Dq_{2}^{\sigma}+Dp_{1,k}^{\sigma}
+(((q_{1}^{\sigma})^{\cdot2}+(q_{2}^{\sigma})^{\cdot2})+\beta((q_{3}^{\sigma})^{\cdot2}+(q_{4}^{\sigma})^{\cdot2}))\cdot q_{1}^{\sigma})d\sigma,\\
&q_{3}^{1}=q_{3}^{0}+\Delta t\int_{0}^{1}(\alpha Dq_{3}^{\sigma}-Dp_{4}^{\sigma}
-(\beta((q_{1}^{\sigma})^{\cdot2}+(q_{2}^{\sigma})^{\cdot2})+((q_{3}^{\sigma})^{\cdot2}+(q_{4}^{\sigma})^{\cdot2}))\cdot q_{4}^{\sigma})d\sigma,\\
&q_{4}^{1}=q_{4}^{0}+\Delta t\int_{0}^{1}(\alpha Dq_{4}^{\sigma}+Dp_{3}^{\sigma}
+(\beta((q_{1}^{\sigma})^{\cdot2}+(q_{2}^{\sigma})^{\cdot2})+((q_{3}^{\sigma})^{\cdot2}+(q_{4}^{\sigma})^{\cdot2}))\cdot q_{3}^{\sigma})d\sigma,\\
&\delta_{x}q_{i}^{\sigma}=Dq_{i}^{\sigma}=2p_{i}^{\sigma},i=1,2,3,4,\\
\end{aligned}\right.
\end{equation}
where
$q_{i}^{a}=(q_{i,0}^{a},q_{i,1}^{a},\ldots,q_{i,N-1}^{a})^{\intercal},
p_{i}^{a}=(p_{i,0}^{a},p_{i,1}^{a},\ldots,p_{i,N-1}^{a})^{\intercal},a=0,1,\tau$
for $i=1,2,3,4,j=1,2,\ldots,N-1.$
$\ q_{i,j}^{\tau},p_{i,j}^{\tau}$ are polynomials in $\tau.$
The symbols
``$\cdot2$'' and ``$\cdot$'' indicate the entrywise square operation and the
entrywise multiplication operation, respectively.

It can be observed that $p_{i}^{\tau}$ can be eliminated from \eqref{DCNLS}.
If the generating quadrature formula has $s$ nodes, then
$A_{\tau,\sigma}$ is a polynomial of degree $s$ in variable $\tau$,
so are $q_{i,j}^{\tau}$ for $i=1,\ldots,4,j=0,1,\ldots,N-1$. These
polynomials are uniquely determined by their values at $s+1$ points.
For convenience, we choose $0,\frac{1}{s},\frac{2}{s},\ldots ,1.$
Then $q_{i,j}^{\tau}$ can be expressed {as Lagrange interpolating
polynomials based on} these $s+1$ points. Fixing $\tau$ at
$\frac{1}{s},\frac{2}{s},\ldots,\frac{s-1}{s}$, we get a system of
algebraic equations in
$q_{i,j}^{c},c=0,\frac{1}{s},\frac{2}{s},\ldots, 1$. The polynomial
integrals in this system can be calculated accurately. Solving the
algebraic system by an iteration method, we finally obtain the
numerical solution $q_{i,j}^{1}$ .

For example, if we select the CRK method generated by a 2-point GL
quadrature formula, then $q_{i}^{\sigma},p_{i}^{\sigma}$ are vectors
whose entries are polynomials of degree $2$. Thus we
have
$$q_{i}^{\sigma}=q_{i}^{0}\tilde{l}_{1}(\sigma)+q_{i}^{\frac{1}{2}}\tilde{l}_{2}(\sigma)+q_{i}^{1}\tilde{l}_{3}(\sigma),$$
where $\tilde{l}_{1}(\sigma),\tilde{l}_{2}(\sigma),\tilde{l}_{3}(\sigma)$ are Lagrange interpolating polynomials based on the nodes $0,\frac{1}{2},1.$
Let $\tau=\frac{1}{2},$ the first four equations of \eqref{DCNLS} can be written in practical forms :
\begin{equation}\label{60}
\begin{aligned}
&q_{1}^{\frac{1}{2}}=q_{1}^{0}+\Delta t\int_{0}^{1}A_{\frac{1}{2},\sigma}(-\alpha Dq_{1}^{\sigma}-Dp_{2}^{\sigma}
-(((q_{1}^{\sigma})^{\cdot2}+(q_{2}^{\sigma})^{\cdot2})+\beta((q_{3}^{\sigma})^{\cdot2}+(q_{4}^{\sigma})^{\cdot2}))\cdot q_{2}^{\sigma})d\sigma,\\
&q_{2}^{\frac{1}{2}}=q_{2}^{0}+\Delta t\int_{0}^{1}A_{\frac{1}{2},\sigma}(-\alpha Dq_{2}^{\sigma}+Dp_{1}^{\sigma}
+(((q_{1}^{\sigma})^{\cdot2}+(q_{2}^{\sigma})^{\cdot2})+\beta((q_{3}^{\sigma})^{\cdot2}+(q_{4}^{\sigma})^{\cdot2}))\cdot q_{1}^{\sigma})d\sigma,\\
&q_{3}^{\frac{1}{2}}=q_{3}^{0}+\Delta t\int_{0}^{1}A_{\frac{1}{2},\sigma}(\alpha Dq_{3}^{\sigma}-Dp_{4}^{\sigma}
-(\beta((q_{1}^{\sigma})^{\cdot2}+(q_{2}^{\sigma})^{\cdot2})+((q_{3}^{\sigma})^{\cdot2}+(q_{4}^{\sigma})^{\cdot2}))\cdot q_{4}^{\sigma})d\sigma,\\
&q_{4}^{\frac{1}{2}}=q_{4}^{0}+\Delta t\int_{0}^{1}A_{\frac{1}{2},\sigma}(\alpha Dq_{4}^{\sigma}+Dp_{3}^{\sigma}
+(\beta((q_{1}^{\sigma})^{\cdot2}+(q_{2}^{\sigma})^{\cdot2})+((q_{3}^{\sigma})^{\cdot2}+(q_{4}^{\sigma})^{\cdot2}))\cdot q_{3}^{\sigma})d\sigma.\\
\end{aligned}
\end{equation}
After integrating the linear and nonlinear terms about $\sigma,$
\eqref{60} becomes an undetermined system of equations in unknown
vectors
$q_{1}^{\frac{1}{2}},q_{2}^{\frac{1}{2}},q_{3}^{\frac{1}{2}},q_{4}^{\frac{1}{2}},q_{1}^{1},q_{2}^{1},q_{3}^{1},q_{4}^{1}.$
By combining them with the $5th,6th,7th,8th$ equations of
\eqref{DCNLS}, we obtain an entirely determined algebraic system
about them which can be easily solved  by a fixed-point iteration in
practical computations. If the generating quadrature formula has
only one node, for example, the implicit midpoint
rule, then $q_{i}^{\sigma}=(1-\sigma)q_{i}^{0}+\sigma q_{i}^{1}.$ In
this particular case, the first four equations are not necessary to
be taken into account.

According to Theorem \ref{DECL}, the scheme \eqref{DCNLS} preserves
the discrete ECLs:
\begin{equation}
\frac{E_{j}^{1}-E_{j}^{0}}{\Delta
t}+\sum_{k=0}^{N-1}D_{jk}\bar{F}_{jk}=0,
\end{equation}
for $j=0,1,\ldots,N-1,$ where \begin{equation*}\begin{aligned}
&E_{j}^{a}=S_{j}^{a}-\alpha(q_{2,j}^{a}p_{1,j}^{a}-q_{1}^{a}p_{2,j}^{a}+q_{3,j}^{a}p_{4,j}^{a}-q_{4,j}^{a}p_{3,j}^{a})-
\frac{1}{2}\sum_{i=1}^{4}(q_{i,j}^{a}\sum_{k=0}^{N-1}D_{jk}p_{i,k}^{a}-2(p_{i,j}^{a})^{2}),a=0,1,\\
&\bar{F}_{jk}=\frac{1}{2}\sum_{i=1}^{s}b_{i}(\alpha(\langle q_{2,j}\rangle_{i}\langle\delta_{t}q_{1,k}\rangle_{i}-\langle q_{1,j}\rangle_{i}\langle\delta_{t}q_{2,k}\rangle_{i}+\langle q_{3,j}\rangle_{i}\langle\delta_{t}q_{4,k}\rangle_{i}-\langle q_{4,j}\rangle_{i}\langle\delta_{t}q_{3,k}\rangle_{i})+
\sum_{\gamma=1}^{4}(\langle q_{\gamma,j}\rangle_{i}\langle\delta_{t}p_{\gamma,k}\rangle_{i}-\langle p_{\gamma,j}\rangle_{i}\langle\delta_{t}q_{\gamma,k}\rangle_{i})\\
&+\alpha(\langle q_{2,k}\rangle_{i}\langle\delta_{t}q_{1,j}\rangle_{i}-\langle q_{1,k}\rangle_{i}\langle\delta_{t}q_{2,j}\rangle_{i}+\langle q_{3,k}\rangle_{i}\langle\delta_{t}q_{4,j}\rangle_{i}-\langle q_{4,k}\rangle_{i}\langle\delta_{t}q_{3,j}\rangle_{i})+
\sum_{\gamma=1}^{4}(\langle q_{\gamma,k}\rangle_{i}\langle\delta_{t}p_{\gamma,j}\rangle_{i}-\langle p_{\gamma,k}\rangle_{i}\langle
\delta_{t}q_{\gamma,j}\rangle_{i})).\\
\end{aligned}\end{equation*}

\section{Local energy-preserving schemes for 2D nonlinear Schr\"{o}dinger
equations}\label{LE2DS} Another PDE which we pay attention to is the
NLS with two spatial variables:
\begin{equation}\label{HDNLS}
i\psi_{t}+\alpha(\psi_{xx}+\psi_{yy})+V^\prime(|\psi|^{2},x,y)\psi=0.
\end{equation}
The symbol $'$ indicates the derivative of $V$ with respect
to the first variable. Let $\psi=p+iq$, $p$ and $q$ are real and
imaginary parts of $\psi,$ respectively. Introducing
$v=\partial_{x}p,w=\partial_{x}q,a=\partial_{y}p,b=\partial_{y}q,$
we can formulate this equation to the compact form
\eqref{HDE}, where
$$
\begin{aligned}
&M=\left(\begin{array}{cccccc}0&1&0&0&0&0\\-1&0&0&0&0&0\\0&0&0&0&0&0\\0&0&0&0&0&0\\0&0&0&0&0&0\\0&0&0&0&0&0\\\end{array}\right),\quad
K=\left(\begin{array}{cccccc}0&0&-\alpha&0&0&0\\0&0&0&-\alpha&0&0\\ \alpha&0&0&0&0&0\\0&\alpha&0&0&0&0\\0&0&0&0&0&0\\0&0&0&0&0&0\\\end{array}\right),
\quad
L=\left(\begin{array}{cccccc}0&0&0&0&-\alpha&0\\0&0&0&0&0&-\alpha\\0&0&0&0&0&0\\0&0&0&0&0&0\\ \alpha&0&0&0&0&0\\0&\alpha&0&0&0&0\\\end{array}\right),
\end{aligned}
$$

and
$$z=(p,q,v,w,a,b)^{\intercal},\quad S=\frac{1}{2}V(p^{2}+q^{2},x,y)+\frac{\alpha}{2}(v^{2}+w^{2}+a^{2}+b^{2}).$$
According to \eqref{2DECL}, the ECL of Equation \eqref{HDNLS} reads
\begin{equation}\label{ECLNLS}
    \partial_tE+\partial_xF+\partial_yG=0,
\end{equation}
where
\begin{equation*}
\begin{aligned}
&E=\frac{1}{2}V(p^{2}+q^{2},x,y)+\frac{\alpha }{2}(pv_{x}+qw_{x}+pa_{y}+qb_{y}),\\
&F=\frac{\alpha }{2}(-pv_{t}-qw_{t}+vp_{t}+wq_{t}),
\quad G=\frac{\alpha }{2}(-pa_{t}-qb_{t}+ap_{t}+bq_{t}).\\
\end{aligned}
\end{equation*}
\eqref{HDNLS} also has the local charge conservation law:
\begin{equation*}
\partial_{t}C+\partial_{x}P+\partial_{y}Q=0,
\end{equation*}
where
\begin{equation*}
C=\frac{1}{2}(p^{2}+q^{2}),P=\alpha(-vq+wp),Q=\alpha(-aq+bp).
\end{equation*}

If $V$ is independent of the variables $x,y,$ then \eqref{HDNLS} is a
multi-symplectic PDE. According to \eqref{MSCL}, the MSCL is:
\begin{equation*}
\partial_{t}(dp\wedge dq)+\partial_{x}(-\alpha dp\wedge dv-\alpha dq\wedge dw)+\partial_{y}(-\alpha dp\wedge da-\alpha dq\wedge db)=0.
\end{equation*}

All of these conservation laws lead to corresponding global
invariants under the PBC. The full discretized scheme of
\eqref{HDNLS} in vector form derived from our discrete procedure \eqref{FDS} is:
\begin{equation}\label{2DNLSM}
\left\{\begin{aligned}
&p^{\tau}=p^{0}+\Delta t\int_{0}^{1}A_{\tau,\sigma}(-(D_{x}\otimes I_{M})\alpha w^{\sigma}-(I_{N}\otimes D_{y})\alpha b^{\sigma}-
V^{'}((p^{\sigma})^{\cdot2}+(q^{\sigma})^{\cdot2},x\otimes e_{M},e_{N}\otimes y)\cdot q^{\sigma})d\sigma,\\
&q^{\tau}=q^{0}+\Delta t\int_{0}^{1}A_{\tau,\sigma}((D_{x}\otimes I_{M})\alpha v^{\sigma}+(I_{N}\otimes D_{y})\alpha a^{\sigma}+
V^{'}((p^{\sigma})^{\cdot2}+(q^{\sigma})^{\cdot2},x\otimes e_{M},e_{N}\otimes y)\cdot p^{\sigma})d\sigma,\\
&p^{1}=p^{0}+\Delta t\int_{0}^{1}(-(D_{x}\otimes I_{M})\alpha w^{\sigma}-(I_{N}\otimes D_{y})\alpha b^{\sigma}-
V^{'}((p^{\sigma})^{\cdot2}+(q^{\sigma})^{\cdot2},x\otimes e_{M},e_{N}\otimes y)\cdot q^{\sigma})d\sigma,\\
&q^{1}=q^{0}+\Delta t\int_{0}^{1}(D_{x}\otimes I_{M})\alpha v^{\sigma}+(I_{N}\otimes D_{y})\alpha a^{\sigma}+
V^{'}((p^{\sigma})^{\cdot2}+(q^{\sigma})^{\cdot2},x\otimes e_{M},e_{N}\otimes y)\cdot p^{\sigma})d\sigma,\\
&\delta_{x}p^{\sigma}=(D_{x}\otimes I_{M})p^{\sigma}=v^{\sigma},\delta_{x}q^{\sigma}=(D_{x}\otimes I_{M})q^{\sigma}=w^{\sigma},\\
&\delta_{y}p^{\sigma}=(I_{N}\otimes D_{y})p^{\sigma}=a^{\sigma},\delta_{y}q^{\sigma}=(I_{N}\otimes D_{y})q^{\sigma}=b^{\sigma},\\
\end{aligned}\right.
\end{equation}
where the entries $p_{jl},q_{jl}$ of vectors $p,q$ are arranged according to lexicographical order :
$$(j,l)\prec (k,l), \text{  when $j\prec k$},\quad (j,l)\prec (j,m), \text{  when $l\prec m$},$$
$x=(x_{0},x_{1},\ldots,x_{N-1})^{\intercal},y=(y_{0},y_{1},\ldots,y_{M-1})^{\intercal},
\ I_{N},I_{M},e_{N},e_{M}$ are $N$th and $M$th order identity
matrices, $N$ length and $M$ length identity vectors, respectively.
If the potential $V$ is a polynomial in the first variable, then the scheme
\eqref{2DNLSM} can be implemented in a similar way to \eqref{DCNLS}.

By Theorem \ref{HDECL}, \eqref{2DNLSM} preserves the discrete ECLs:
\begin{equation}\label{newECL}
\frac{E_{jl}^{1}-E_{jl}^{0}}{\Delta t}+\sum_{k=0}^{N-1}(D_{x})_{jk}\bar{F}_{jk,l}+\sum_{m=0}^{M-1}(D_{y})_{lm}\bar{G}_{j,lm}=0,
\end{equation}
for $j=0, 1, \ldots, N-1$, $l=0, 1, \ldots, M-1$, where
\begin{equation*}
\begin{aligned}
&E_{jl}^{c}=\frac{1}{2}V((p_{jl}^{c})^{2}+(q_{jl}^{c})^{2},x_{j},y_{l})+\frac{\alpha}{2}(p_{jl}^{c}\delta_{x}v_{jl}^{c}+
q_{jl}^{c}\delta_{x}w_{jl}^{c}+p_{jl}^{c}\delta_{y}a_{jl}^{c}+q_{jl}^{c}\delta_{y}b_{jl}^{c}),c=0,1,\\
&\bar{F}_{jk,l}=\frac{\alpha }{2}\sum_{i=1}^{s}b_{i}(-\langle p_{jl}\rangle_{i}\langle\delta_{t}v_{kl}\rangle_{i}
-\langle q_{jl}\rangle_{i}\langle\delta_{t}w_{kl}\rangle_{i}+\langle v_{jl}\rangle_{i}\langle\delta_{t}p_{kl}\rangle_{i}
+\langle w_{jl}\rangle_{i}\langle\delta_{t}q_{kl}\rangle_{i}\\
&+\frac{\alpha }{2}\sum_{i=1}^{s}b_{i}(-\langle p_{kl}\rangle_{i}\langle\delta_{t}v_{jl}\rangle_{i}
-\langle q_{kl}\rangle_{i}\langle\delta_{t}w_{jl}\rangle_{i}+\langle v_{kl}\rangle_{i}\langle\delta_{t}p_{jl}\rangle_{i}
+\langle w_{kl}\rangle_{i}\langle\delta_{t}q_{jl}\rangle_{i}),\\
&\bar{G}_{j,lm}=\frac{\alpha }{2}\sum_{i=1}^{s}b_{i}(-\langle p_{jl}\rangle_{i}\langle\delta_{t}a_{jm}\rangle_{i}
-\langle q_{jl}\rangle_{i}\langle\delta_{t}b_{jm}\rangle_{i}+\langle a_{jl}\rangle_{i}\langle\delta_{t}p_{jm}\rangle_{i}
+\langle b_{jl}\rangle_{i}\langle\delta_{t}q_{jm}\rangle_{i})\\
&+\frac{\alpha }{2}\sum_{i=1}^{s}b_{i}(-\langle p_{jm}\rangle_{i}\langle\delta_{t}a_{jl}\rangle_{i}
-\langle q_{jm}\rangle_{i}\langle\delta_{t}b_{jl}\rangle_{i}+\langle a_{jm}\rangle_{i}\langle\delta_{t}p_{jl}\rangle_{i}
+\langle b_{jm}\rangle_{i}\langle\delta_{t}q_{jl}\rangle_{i}).
\end{aligned}
\end{equation*}
However, the expressions of $E_{jl}^{c}$, $\bar{F}_{jk,l}$, $\bar{G}_{j,lm}$ are lengthy and difficult to be calculated.
We thus rewrite them as:
\begin{equation}\label{II}
\begin{aligned}
&E_{jl}^{c}=\frac{1}{2}V((p_{jl}^{c})^{2}+(q_{jl}^{c})^{2},x_{j},y_{l})-\frac{\alpha}{2}((v_{jl}^{c})^{2}+(w_{jl}^{c})^{2}
+(a_{jl}^{c})^{2}+(b_{jl}^{c})^{2})+\tilde{E}_{jl}^{c},c=0,1,\\
&\bar{F}_{jk,l}=\alpha\sum_{i=1}^{s}b_{i}(\langle v_{jl}\rangle_{i}\langle\delta_{t}p_{kl}\rangle_{i}
+\langle w_{jl}\rangle_{i}\langle\delta_{t}q_{kl}\rangle_{i}+\langle v_{kl}\rangle_{i}\langle\delta_{t}p_{jl}\rangle_{i}
+\langle w_{kl}\rangle_{i}\langle\delta_{t}q_{jl}\rangle_{i})+\tilde{F}_{jk,l},\\
&\bar{G}_{j,lm}=\alpha\sum_{i=1}^{s}b_{i}(\langle a_{jl}\rangle_{i}\langle\delta_{t}p_{jm}\rangle_{i}
+\langle b_{jl}\rangle_{i}\langle\delta_{t}q_{jm}\rangle_{i}+\langle a_{jm}\rangle_{i}\langle\delta_{t}p_{jl}\rangle_{i}
+\langle b_{jm}\rangle_{i}\langle\delta_{t}q_{jl}\rangle_{i})+\tilde{G}_{j,lm},\\
\end{aligned}
\end{equation}
where $\tilde{E}_{jl}^{c},\ \tilde{F}_{jk,l}\ \tilde{G}_{j,lm}$ are
the corresponding residuals. Taking derivatives with respect to $\tau$ on both sides of
\begin{equation*}
    \delta_xp_{jl}^\tau=v_{jl}^\tau=v_{jl}^0+\Delta t\int_0^1A_{\tau,\sigma}\delta_tv_{jl}^\sigma d\sigma
\end{equation*}
and setting $\tau=c_1,\ldots,c_s$, we have
\begin{equation*}
    \langle\delta_x\delta_tp_{jl}\rangle_i=\langle\delta_tv_{jl}\rangle_i,
\end{equation*}
for $i=1,\ldots,s.$ By using this law of commutation and following the standard proof procedure of Theorem \ref{DECL}, the term involving $v_{jl}$ can be eliminated from $\tilde{E}_{jl}^{c},\ \tilde{F}_{jk,l}\ \tilde{G}_{j,lm}$. The terms involving $w_{jl}$, $a_{jl}$, $b_{jl}$ can be dealt with in the same way. 

Therefore we arrive at 
\begin{equation}\label{new}
    \frac{\tilde{E}_{jl}^{1}-\tilde{E}_{jl}^{0}}{\Delta t}+\sum_{k=0}^{N-1}(D_{x})_{jk}\tilde{F}_{jk,l}+\sum_{m=0}^{M-1}(D_{y})_{lm}\tilde{G}_{j,lm}=0.
\end{equation}
Subtracting \eqref{new} from \eqref{newECL}, we obtain the new discree ECLs of \eqref{2DNLSM}
\begin{equation}\label{2DNLSECL}
\frac{E_{jl}^{1}-E_{jl}^{0}}{\Delta t}+\sum_{k=0}^{N-1}(D_{x})_{jk}\bar{F}_{jk,l}+\sum_{m=0}^{M-1}(D_{y})_{lm}\bar{G}_{j,lm}=0,
\end{equation}
for $j=0,1,\ldots,N-1,\quad l=0,1,\ldots,M-1,$ where
\begin{equation*}
\begin{aligned}
&E_{jl}=\frac{1}{2}V(p_{jl}^{2}+q_{jl}^{2},x_{j},y_{l})-\frac{\alpha}{2}(v_{jl}^{2}+w_{jl}^{2}+a_{jl}^{2}+b_{jl}^{2}),\\
&\bar{F}_{jk,l}=\alpha\sum_{i=1}^{s}b_{i}(\langle v_{jl}\rangle_{i}\langle\delta_{t}p_{kl}\rangle_{i}
+\langle w_{jl}\rangle_{i}\langle\delta_{t}q_{kl}\rangle_{i}+\langle v_{kl}\rangle_{i}\langle\delta_{t}p_{jl}\rangle_{i}
+\langle w_{kl}\rangle_{i}\langle\delta_{t}q_{jl}\rangle_{i}),\\
&\bar{G}_{j,lm}=\alpha\sum_{i=1}^{s}b_{i}(\langle a_{jl}\rangle_{i}\langle\delta_{t}p_{jm}\rangle_{i}
+\langle b_{jl}\rangle_{i}\langle\delta_{t}q_{jm}\rangle_{i}+\langle a_{jm}\rangle_{i}\langle\delta_{t}p_{jl}\rangle_{i}
+\langle b_{jm}\rangle_{i}\langle\delta_{t}q_{jl}\rangle_{i}).\\
\end{aligned}
\end{equation*}
\eqref{2DNLSECL} can be thought of as a  discrete
version of
$$\partial_{t}(\frac{1}{2}V(p^{2}+q^{2},x,y))+\partial_{x}(vp_{t}+wq_{t})+\partial_{y}(ap_{t}+bq_{t})=0,$$
which is a more common ECL equation \eqref{HDNLS} than \eqref{ECLNLS}.

\eqref{2DNLSECL} involve less discrete derivatives than
\eqref{newECL}, thus  can be easily calculated.

\section{Numerical experiments for coupled nonlinear Schr\"{o}dingers
equations}\label{Numer_Experi} If we choose the two-point Gauss-Legendre
quadrature formula:
\begin{equation*}
\begin{aligned}
&b_{1}=\frac{1}{2},b_{2}=\frac{1}{2},\\
&c_{1}=\frac{1}{2}-\frac{\sqrt{3}}{6},c_{2}=\frac{1}{2}+\frac{\sqrt{3}}{6}\\
\end{aligned}
\end{equation*}
for the CRK method, then
\begin{equation*}
A_{\tau,\sigma}=\tau((4-3\tau)-6(1-\tau)\sigma).
\end{equation*}
This CRK method is of order four by \eqref{Order}. In this section, we use it for the temporal discretization
while the spatial direction is discretized by the
pseudospectral method. The corresponding local energy-preserving method for the CNLS is denoted by ET4.

Throughout the experiments in this section we always take the periodic
boundary condition
$u(x_{0},t)=u(x_{0}+L,t),v(x_{0},t)=v(x_{0}+L,t)$ and set the initial
time $t_{0}=0$. Besides the discrete global energy which has been
{mentioned in \eqref{GEL}, we define} these discrete global
quantities as follows:

1. The discrete global charges of $u$ and $v$ at time $n\Delta t$:
\begin{equation*}
\left\{
\begin{aligned}
&CH_{U}^{n}=\Delta x\sum_{j=0}^{N-1}((q_{1,j}^{n})^2+(q_{2,j}^{n})^2),\\
&CH_{V}^{n}=\Delta x\sum_{j=0}^{N-1}((q_{3,j}^{n})^2+(q_{4,j}^{n})^2).\\
\end{aligned}\right.
\end{equation*}

2. The discrete global momentum at time $n\Delta t$:
\begin{equation*}
I^{n}=\Delta x\sum_{j=0}^{N-1}(q_{2,j}^{n}p_{1,j}^{n}-q_{1,j}^{n}p_{2,j}^{n}+q_{4,j}^{n}p_{3,j}^{n}-q_{3,j}^{n}p_{4,j}^{n}).
\end{equation*}

The (relative) global energy error (GEE$^{n}$), global momentum
error (GIE$^{n}$), global charge errors of $u$ (GCE$_{U}^{n}$) and $v$
(GCE$_{V}^{n}$) at time $n\Delta t$ will be calculated by the
following formulas :

\begin{equation*}
\begin{aligned}
&GGE^{n}=\frac{E^{n}-E^{0}}{|E^{0}|},
GIE^{n}=\frac{I^{n}-I^{0}}{|I^{0}|},\\
&GCE_{U}^{n}=\frac{CH_{U}^{n}-CH_{U}^{0}}{|CH_{U}^{0}|},
GCE_{V}^{n}=\frac{CH_{V}^{n}-CH_{V}^{0}}{|CH_{V}^{0}|},\\
\end{aligned}
\end{equation*}
respectively.

\begin{myexp}\label{5.1}
We first consider to set the constants
$\alpha,\beta=0.$ Then the CNLS decompose into two independent
NLSs:
\begin{equation}
\left\{
\begin{aligned}
&iu_{t}+\frac{1}{2}u_{xx}+|u|^{2}u=0,\\
&iv_{t}+\frac{1}{2}v_{xx}+|v|^{2}v=0.\\
\end{aligned}\right.
\end{equation}
Given the initial condition:
\begin{equation*}
\left\{
\begin{aligned}
&u(x,0)=sech(x),\\
&v(x,0)=sech(x)exp(i\frac{x}{\sqrt{10}}),\\
\end{aligned}\right.
\end{equation*}
the analytic expressions of $u$ and $v$ are :
\begin{equation}\label{ES}
\left\{
\begin{aligned}
&u(x,t)=sech(x)exp(i\frac{t}{2}),\\
&v(x,t)=sech(x-\frac{t}{\sqrt{10}})exp(i(\frac{x}{\sqrt{10}}+\frac{9}{20}t)).\\
\end{aligned}\right.
\end{equation}

In this experiment, we compute the difference between the numerical solution and the exact solution
of $u$. Since $u$ decays exponentially away from the point $(0,t)$,
we can take the boundary condition $u(-30,0)=u(30,0),v(-30,0)=v(30,0)$
with little loss of accuracy on $u$. We also
compare our local energy-preserving method ET4 with a classical
multi-symplectic scheme (MST4) which is obtained by concatenating the two-point
Gauss-Legendre symplectic Runge--Kutta method in time and the
pseudospectral method in space.  Note that ET4 and MST4 are of the same order. Let $N=300,\Delta t=0.4,\ 0.8$ and
set  $\varepsilon=10^{-14}$ as the error tolerance for iteration
solutions. The numerical results over the time interval $[0,1200]$, which is about $100$ multiples of the period of $u$, are plotted in
Figs. \ref{EI},\ldots,\ref{shape}.

\begin{figure}[ptb]
\centering
\begin{tabular}[c]{cccc}%
  \subfigure[Global energy and momentum errors]{\includegraphics[width=8cm,height=6cm]{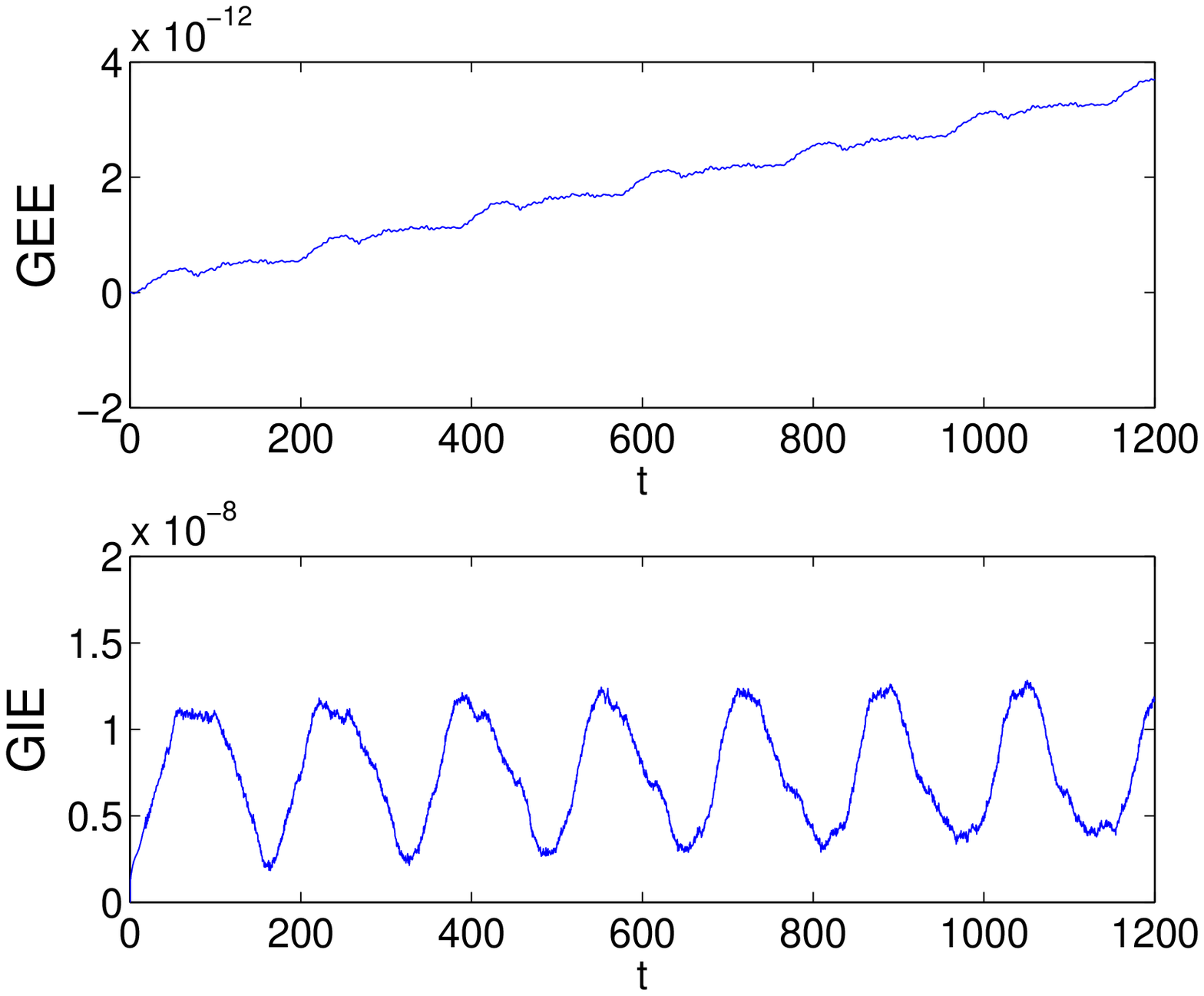}}
  \subfigure[Global charge errors]{\includegraphics[width=8cm,height=6cm]{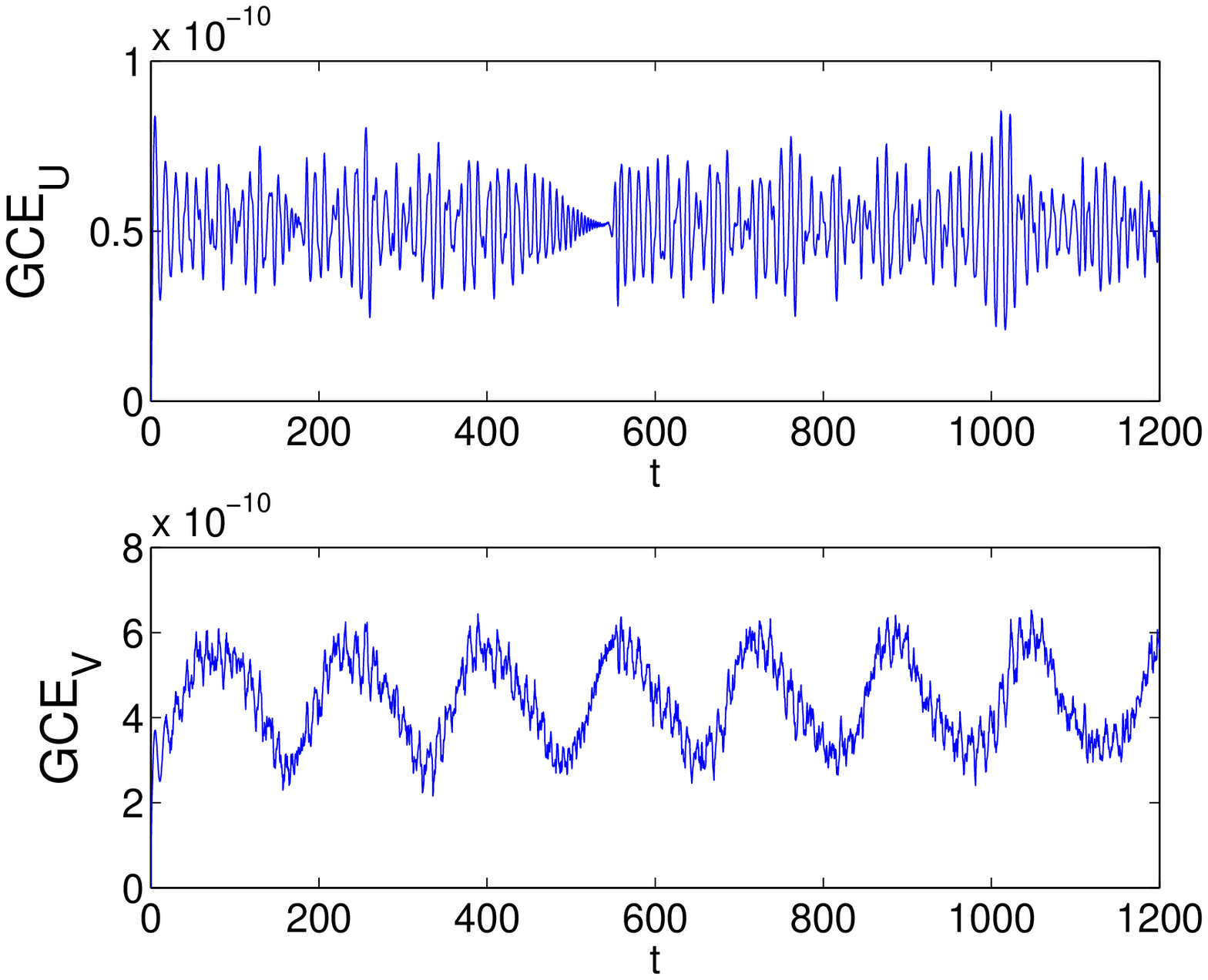}}
\end{tabular}
\caption{Errors obtained by ET4, $\Delta t=0.4$.}
\label{EI}
\end{figure}

\begin{figure}[ptb]
\centering
\begin{tabular}[c]{cccc}%
  \subfigure[Global energy and momentum errors]{\includegraphics[width=8cm,height=6cm]{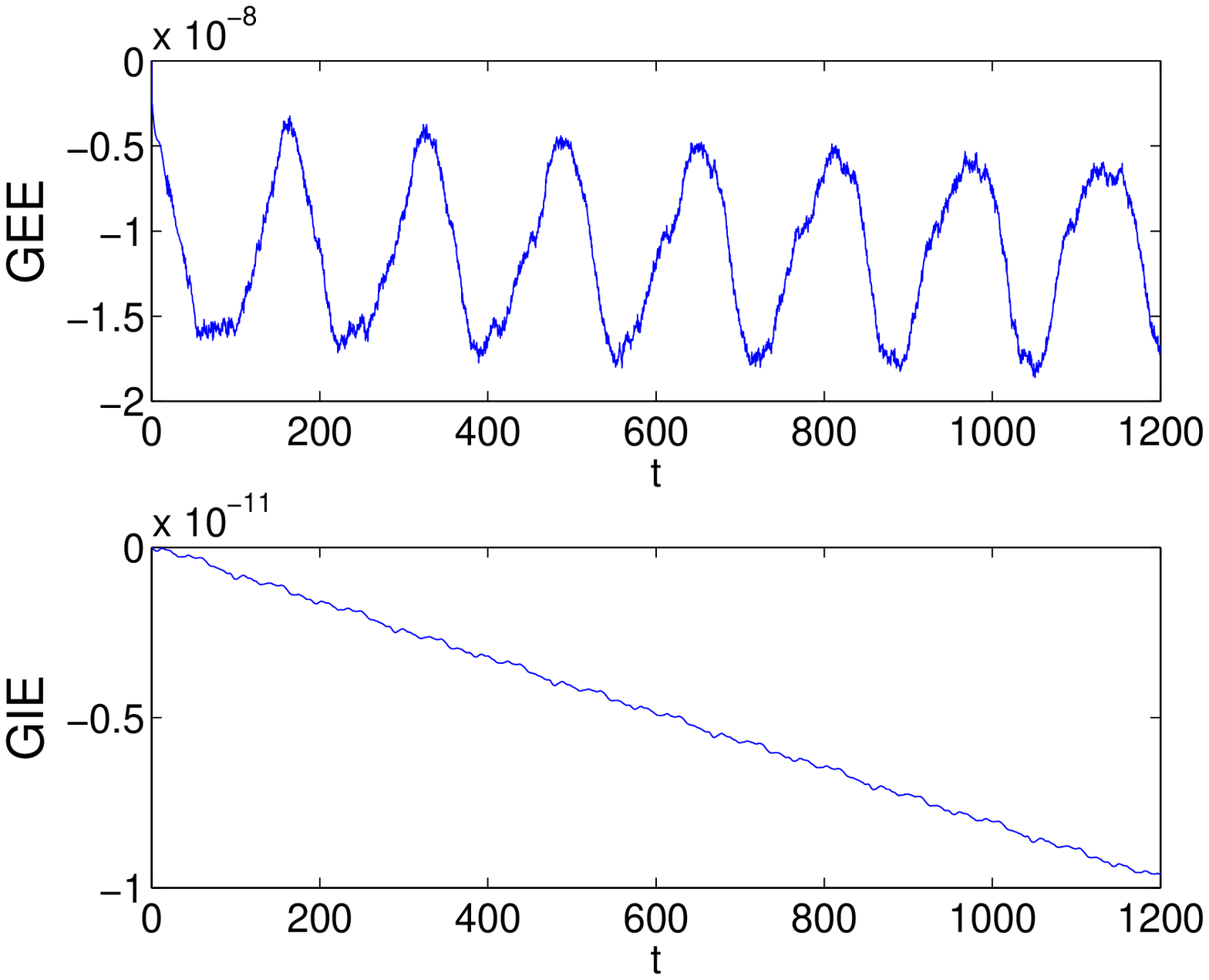}}
  \subfigure[Global charge errors]{\includegraphics[width=8cm,height=6cm]{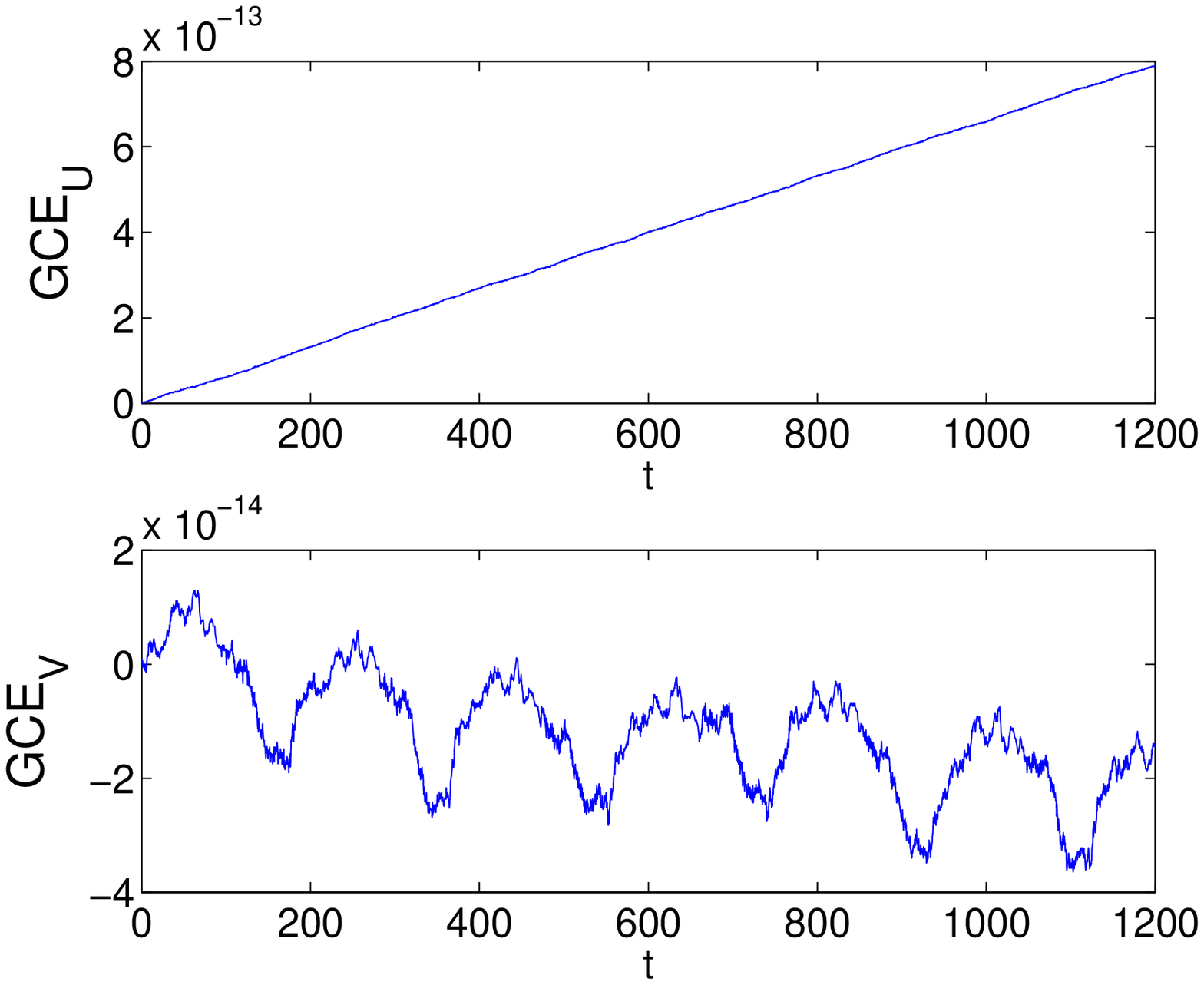}}
\end{tabular}
\caption{Errors obtained by MST4, $\Delta t$=0.4.}
\label{EI2}
\end{figure}

\begin{figure}[ptb]
\centering
\begin{tabular}[c]{cccc}%
  \subfigure[Global energy and momentum errors]{\includegraphics[width=8cm,height=6cm]{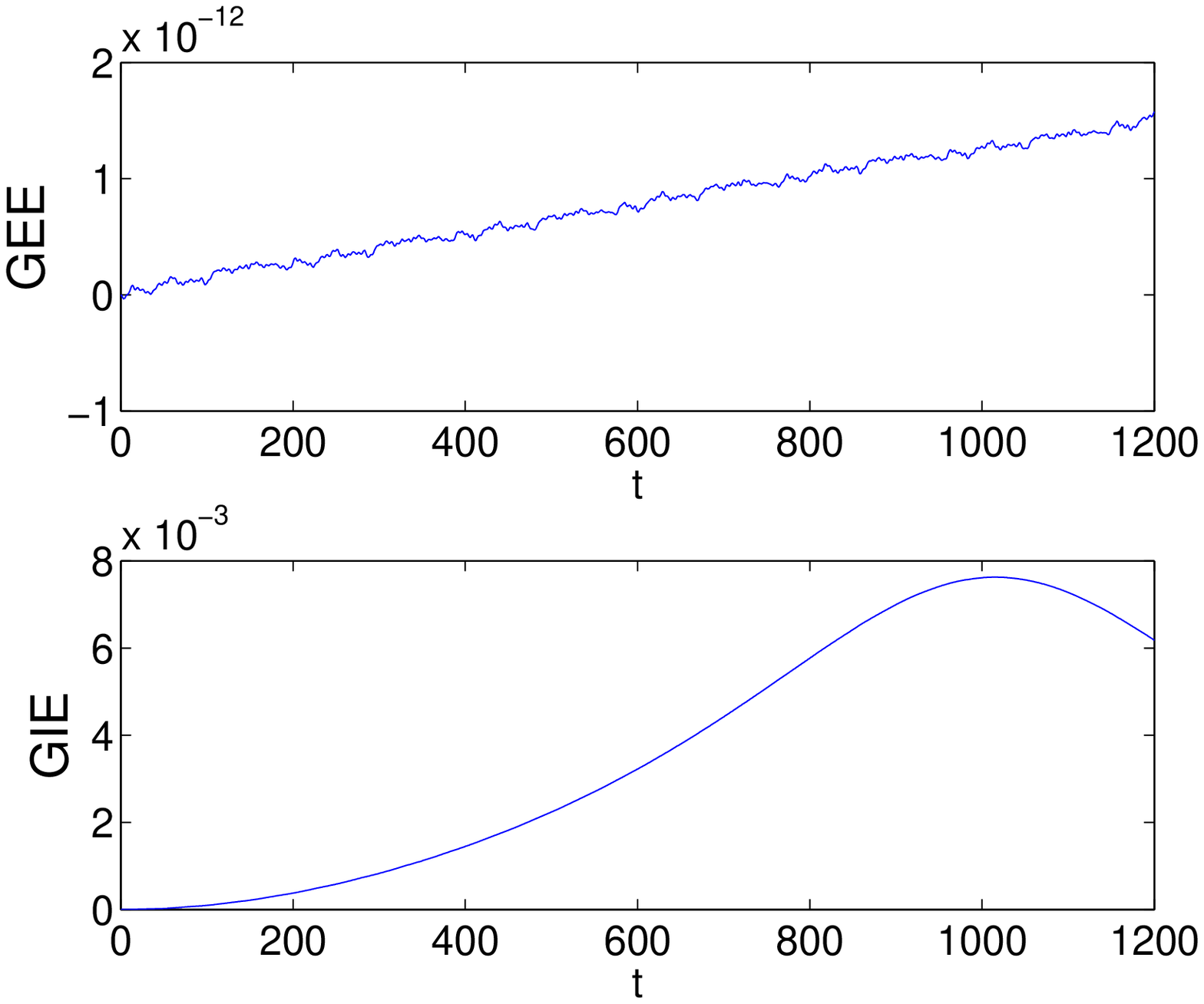}}
  \subfigure[Global charge errors]{\includegraphics[width=8cm,height=6cm]{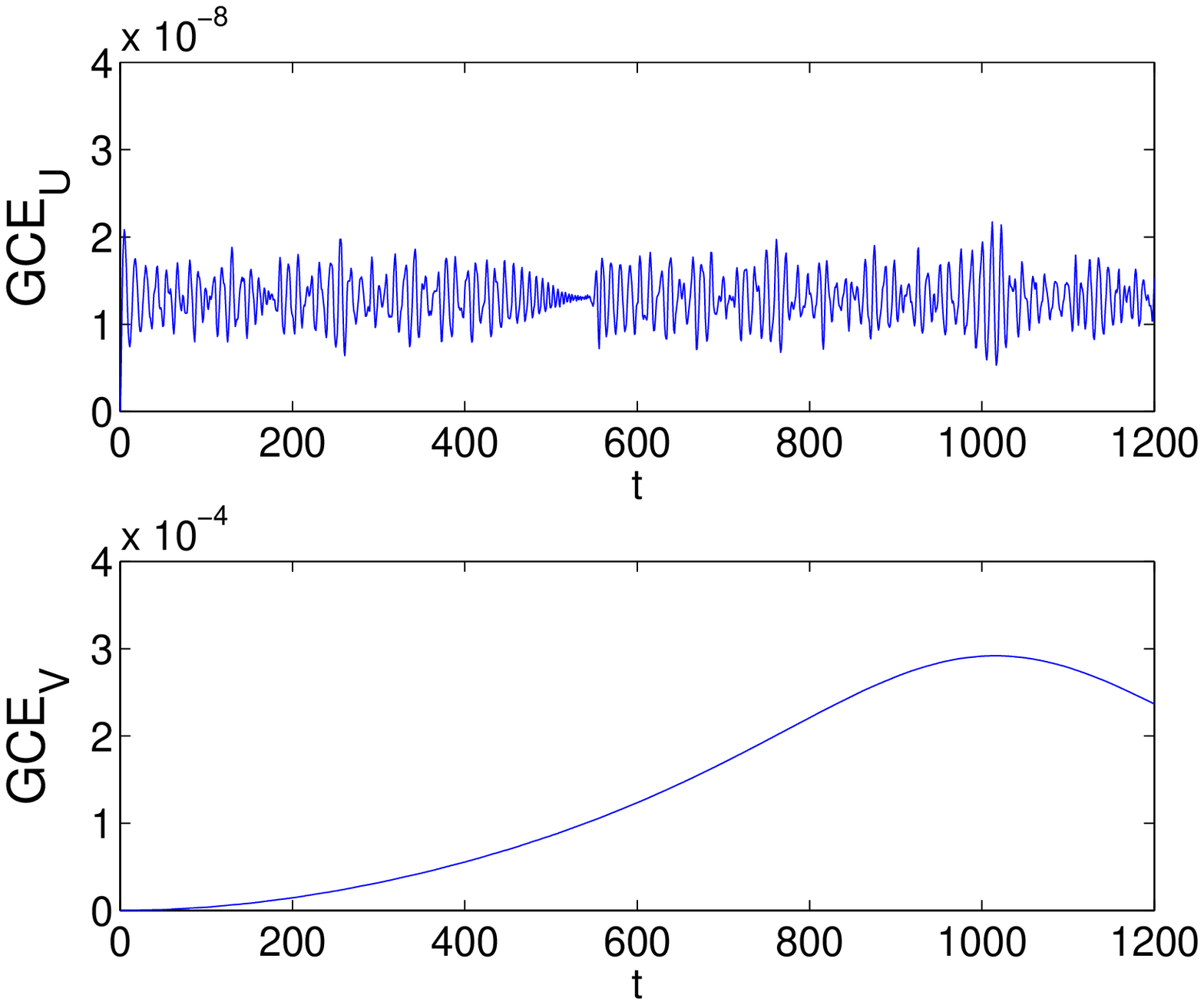}}
\end{tabular}
\caption{Errors obtained by ET4, $\Delta t$=0.8.}
\label{EI3}
\end{figure}

\begin{figure}[ptb]
\centering
\begin{tabular}[c]{cccc}%
  \subfigure[Global energy and momentum errors]{\includegraphics[width=8cm,height=6cm]{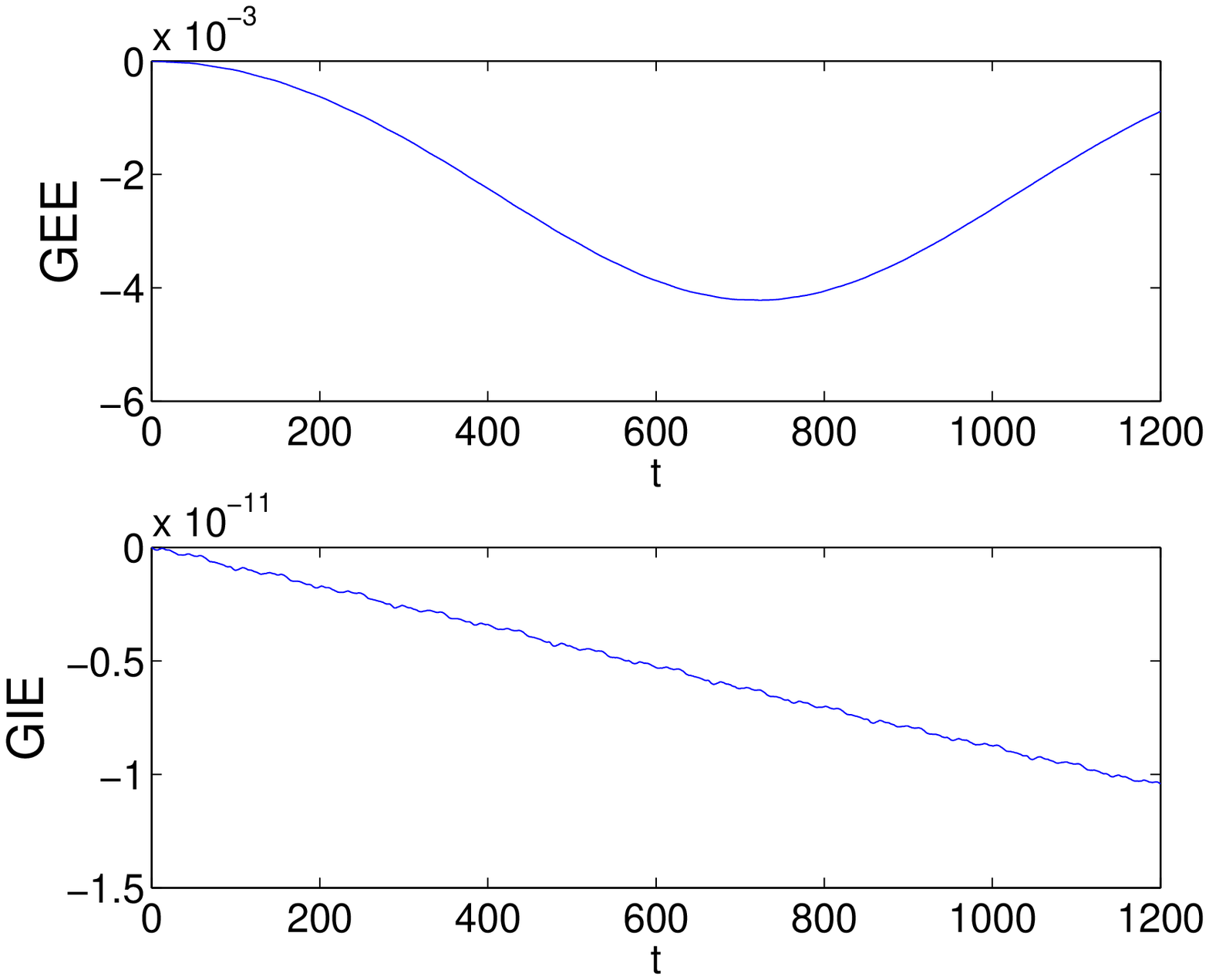}}
  \subfigure[Global charge errors]{\includegraphics[width=8cm,height=6cm]{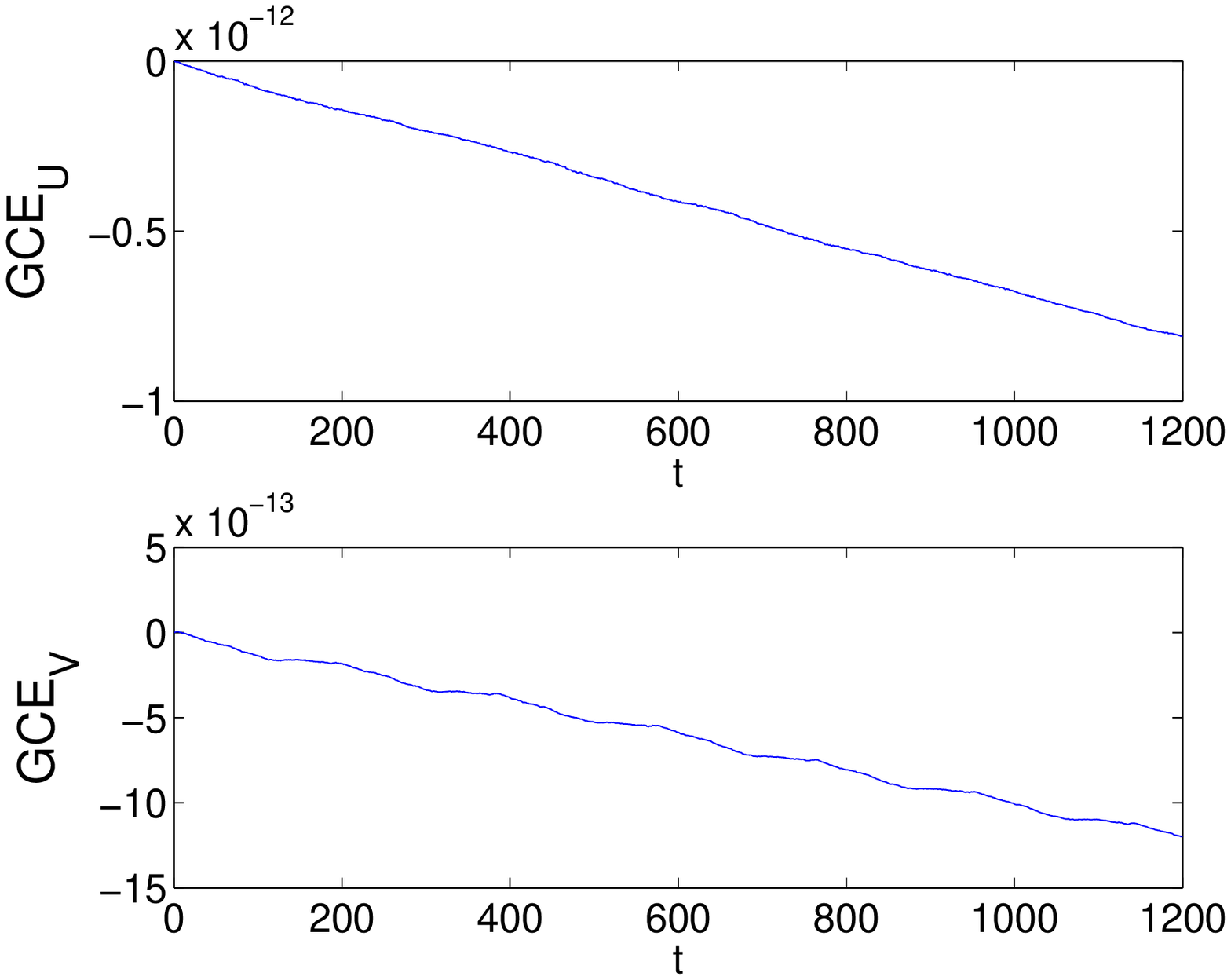}}
\end{tabular}
\caption{Errors obtained by MST4, $\Delta t$=0.8.}
\label{EI4}
\end{figure}

\newpage
\begin{figure}[ptb]
\centering
\begin{tabular}[c]{cccc}%
  \subfigure[Maximum global errors, $\Delta t=0.4.$]{\includegraphics[width=8cm,height=4cm]{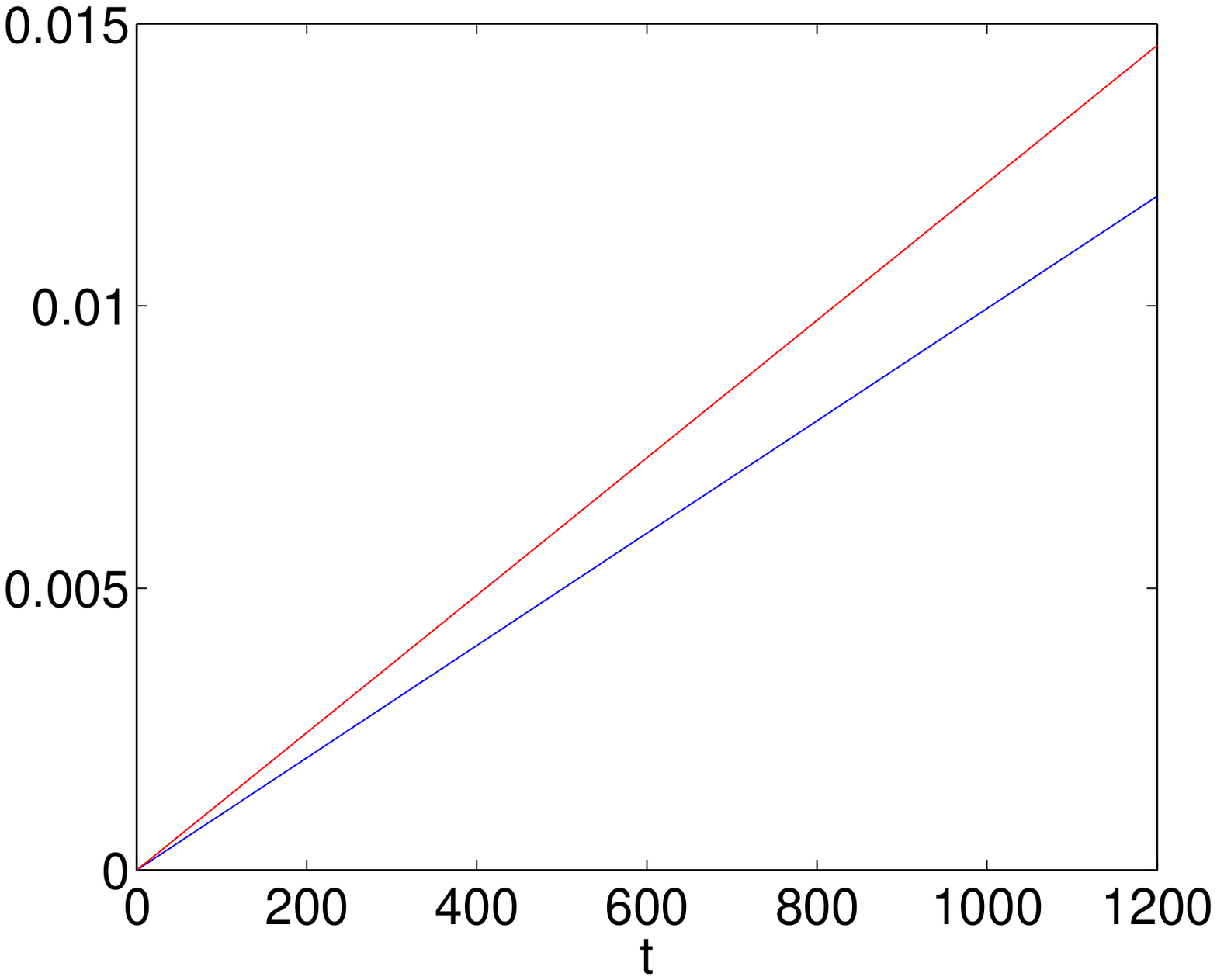}}
  \subfigure[Maximum global errors, $\Delta t=0.8.$]{\includegraphics[width=8cm,height=4cm]{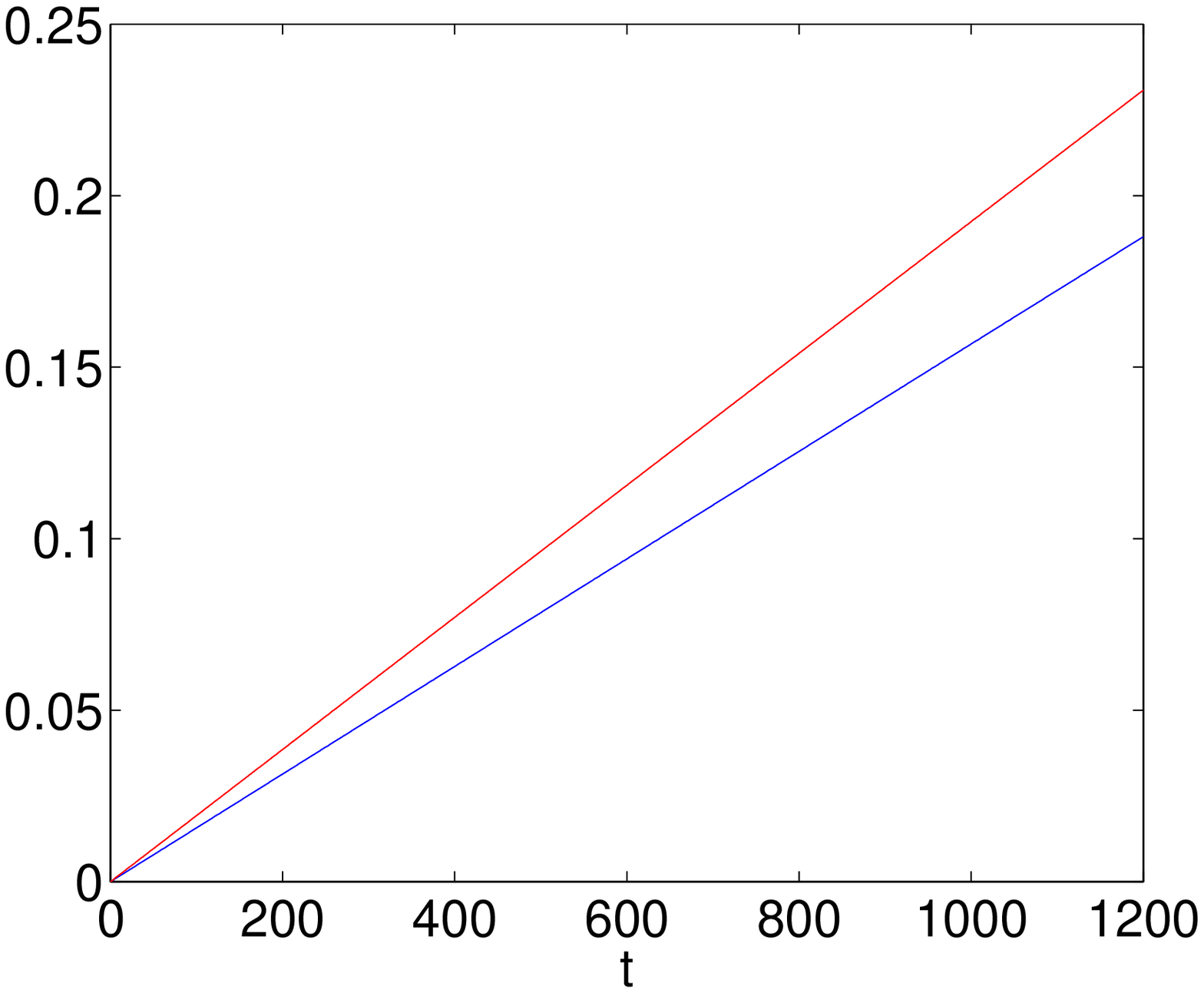}}
\end{tabular}
\caption{Maximum global errors of ET4 (left) and MST4 (right) . The
blue and red curves are the errors of ET4 and MST4 respectively.}
\label{GE}
\end{figure}

\begin{figure}[ptb]
\centering
\begin{tabular}[c]{cccc}%
  \subfigure{\includegraphics[width=8cm,height=6cm]{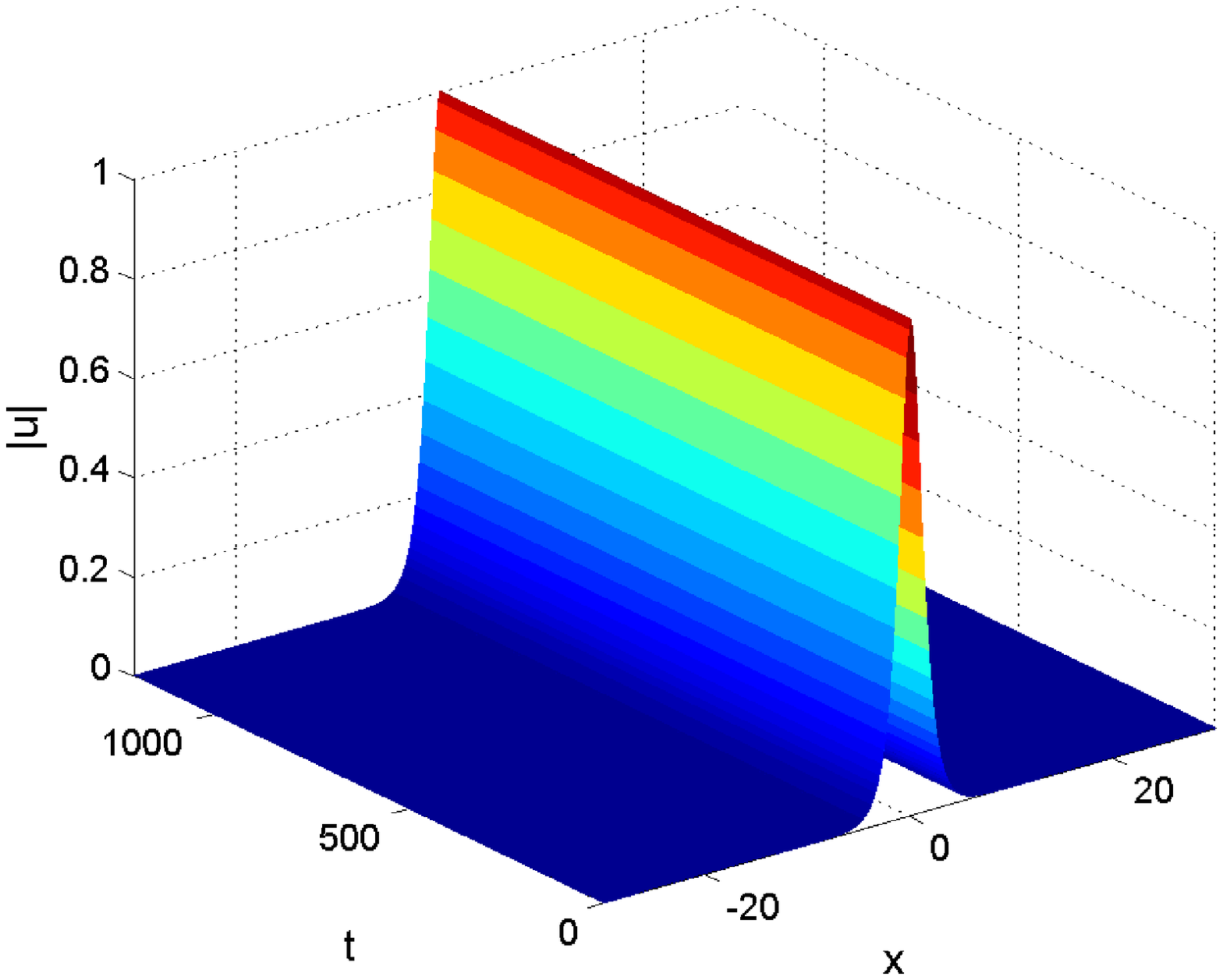}}
  \subfigure{\includegraphics[width=8cm,height=6cm]{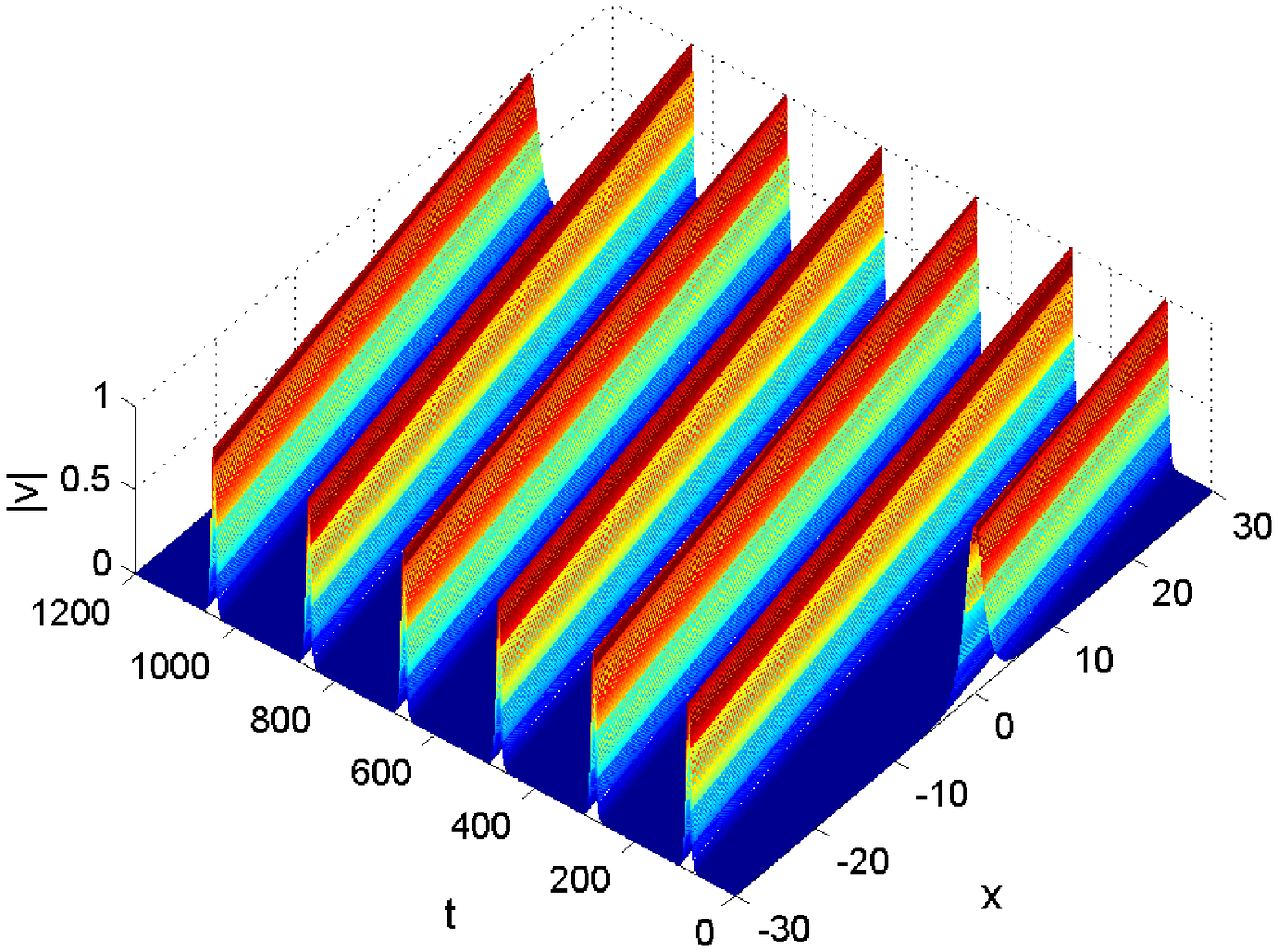}}
\end{tabular}
\caption{Numerical shapes of $u$ (left) and $v$ (right), obtained by ET4.}
\label{shape}
\end{figure}

Figs. \ref{EI}, \ref{EI3} illustrate that ET4 conserves the discrete global energy exactly
(regardless of round-off errors). Although ET4 cannot preserve
discrete global charges, its global charge errors show
reasonable oscillation in magnitude $10^{-10}$ ($\Delta t=0.4$) and
$10^{-4}$ ($\Delta t=0.8$), respectively. We attribute this behaviour to the conjugate-symplecticity
of the CRK method.

On the contrary, Figs. \ref{EI2}, \ref{EI4} show that MST4 conserves
global charges exactly (regardless of round-off errors) while its
global energy errors oscillates in magnitude $10^{-8}$ ($\Delta t=0.4$) and
$10^{-3}$ ($\Delta t=0.8$). This is a
character of symplectic integrators.

According to Figs. \ref{EI},\ldots,\ref{EI4}, MST4 preserves the discrete global momentum better than ET4 in this experiment.

It can be observed from  \eqref{ES} that the amplitudes of
$u$ and $v$ are both $1$. Fig. \ref{GE} shows that ET4 and MST4
both have excellent long-term behaviours. The relative maximum
global errors do not exceed 1.5\% ($\Delta t=0.4$) and 25\% ($\Delta t=0.8$) over the time interval [0,1200] .

Here we  point out that ET4 and ST4 have the same
iteration cost with the same $\Delta t$ and
$\varepsilon$. In the case $\Delta t=0.4$, both of them
need $19$ iterations per step. This phenomenon also
occurs in the following experiments.
\end{myexp}

\begin{myexp}
We now start to simulate the collision of
double solitons with the initial condition:
\begin{equation*}
\left\{
\begin{aligned}
&u(x,0)=\sum_{j=1}^{2}\sqrt{\frac{2a_{j}}{1+\beta}}sech(\sqrt{2a_{j}}(x-x_{j}))exp(i(\gamma_{j}-\alpha)(x-x_{j}),\\
&v(x,0)=\sum_{j=1}^{2}\sqrt{\frac{2a_{j}}{1+\beta}}sech(\sqrt{2a_{j}}(x-x_{j}))exp(i(\gamma_{j}+\alpha)(x-x_{j})).\\
\end{aligned}\right.
\end{equation*}
This is an initial condition resulting in a collision of two
separate single solitons. Here we choose
$x_{0}=0,L=100,\alpha=0.5,\beta=\frac{2}{3},a_{1}=1,a_{2}=0.8,\gamma_{1}=1.5,
\gamma_{2}=-1.5,x_{1}=20,x_{2}=80.$ Take the temporal stepsize $\Delta
t=0.2$ and spatial grid number $N=450$. The numerical results are
shown in Figs. \ref{EI2S}, \ref{shape2S}.

\begin{figure}[ptb]
\centering
\begin{tabular}[c]{cccc}%
  \subfigure[Global energy (upper) and momentum (lower) errors ]{\includegraphics[width=8cm,height=6cm]{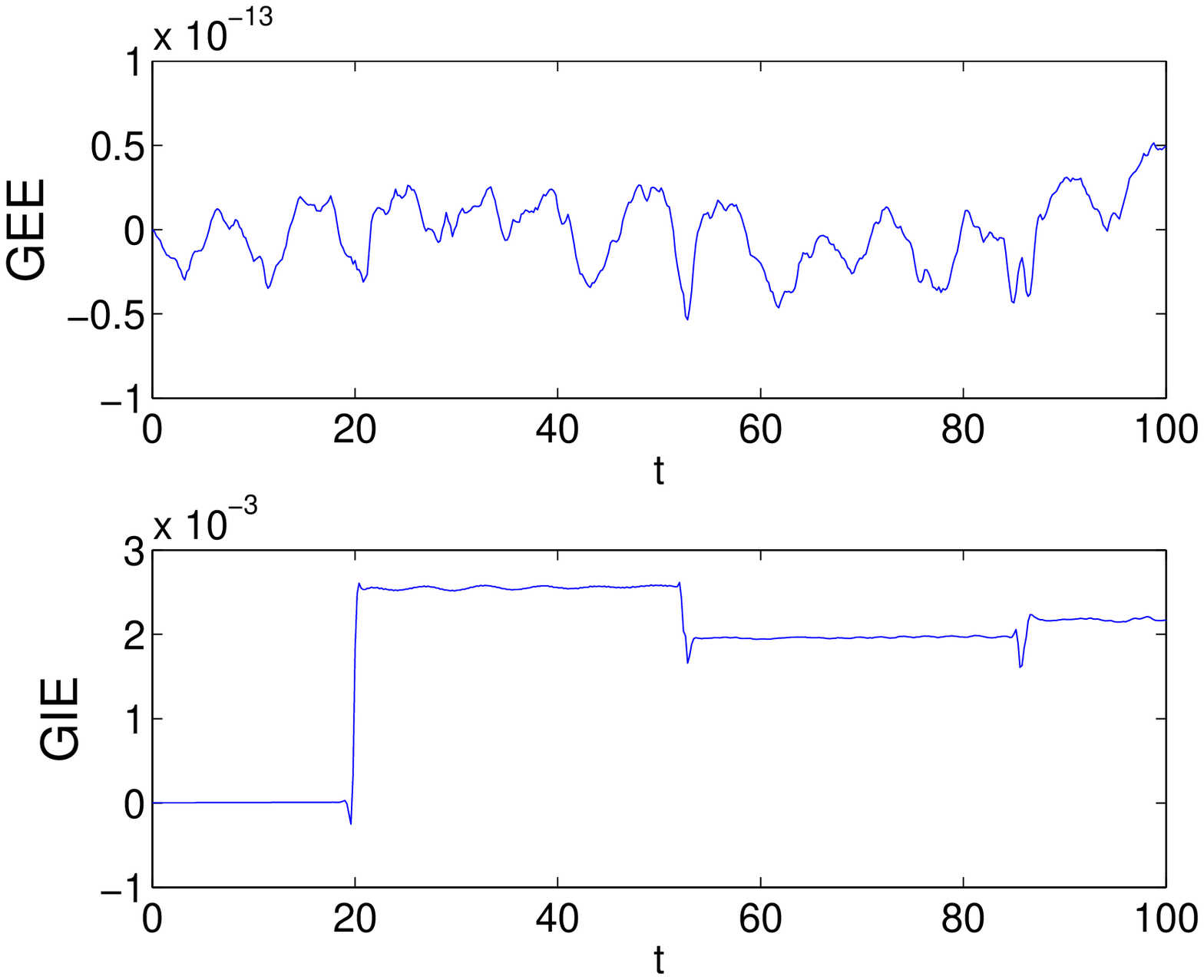}}
  \subfigure[Global charge errors of $u$ (upper) and $v$ (lower)]{\includegraphics[width=8cm,height=6cm]{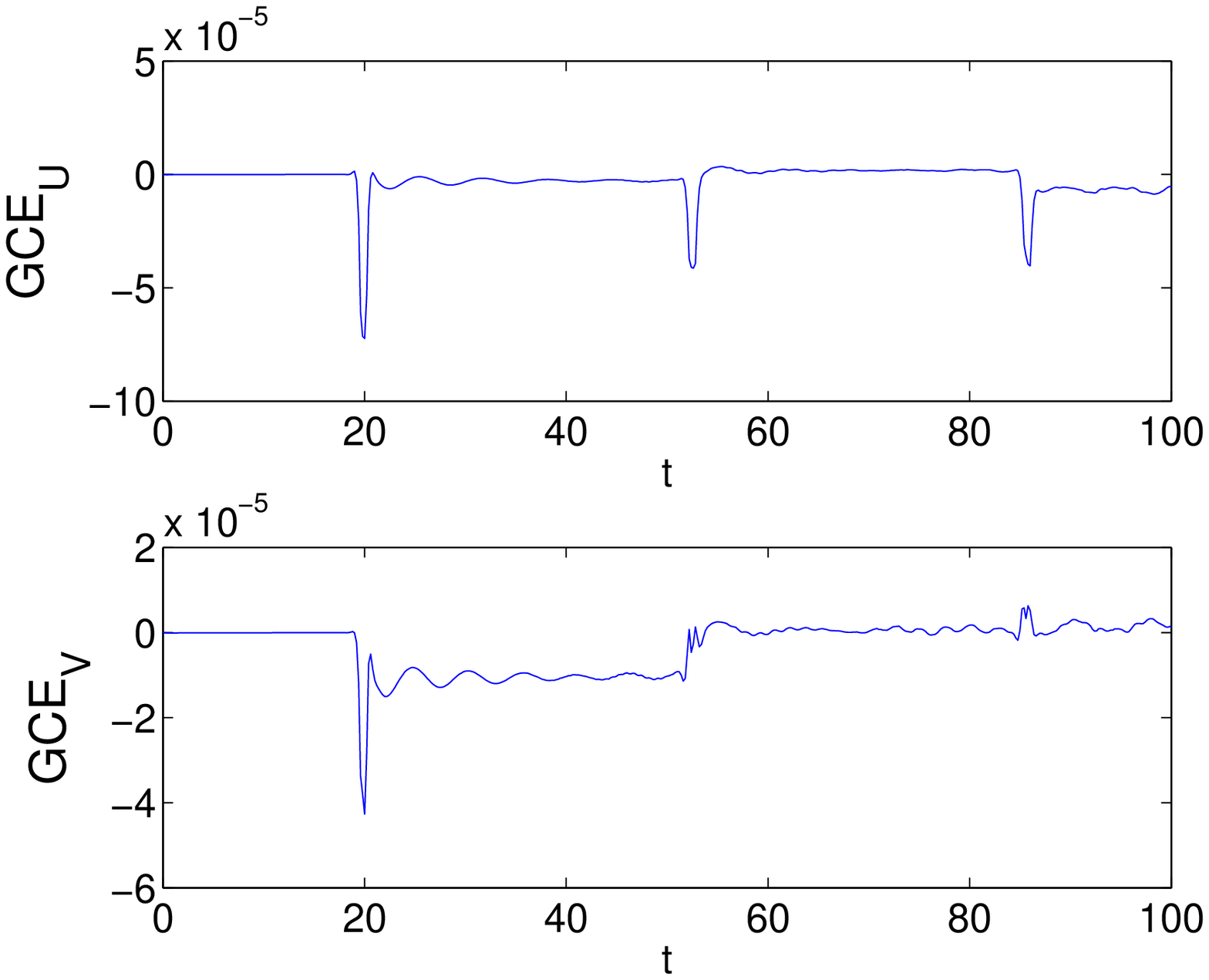}}
\end{tabular}
\caption{Errors obtained by ET4, $\Delta t=0.2,\ N=450$.}
\label{EI2S}
\end{figure}

\begin{figure}[ptb]
\centering
\begin{tabular}[c]{cccc}%
  \subfigure[The shape of bisoliton $u$]{\includegraphics[width=8cm,height=6cm]{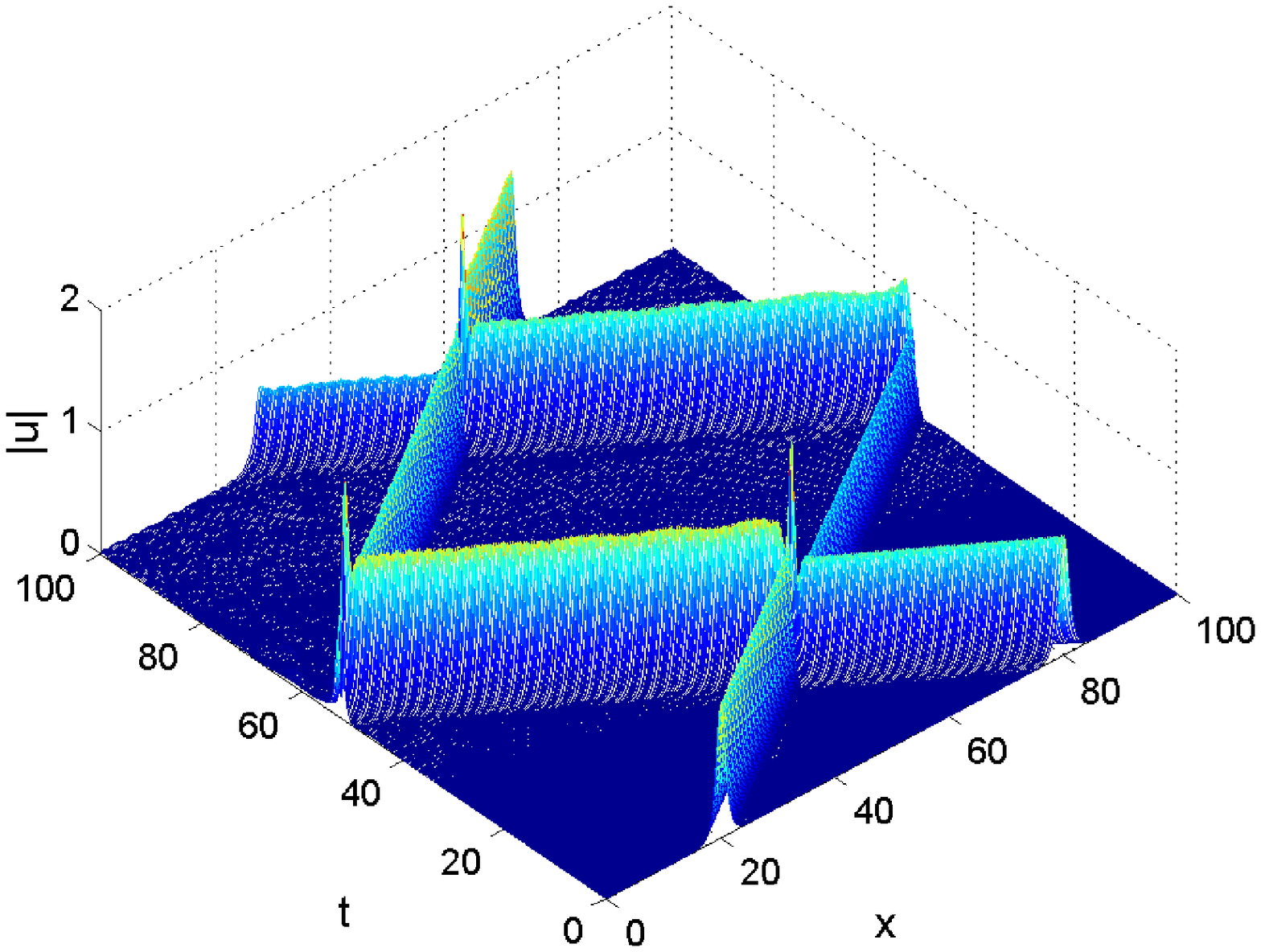}}
  \subfigure[The shape of bisoliton $v$]{\includegraphics[width=8cm,height=6cm]{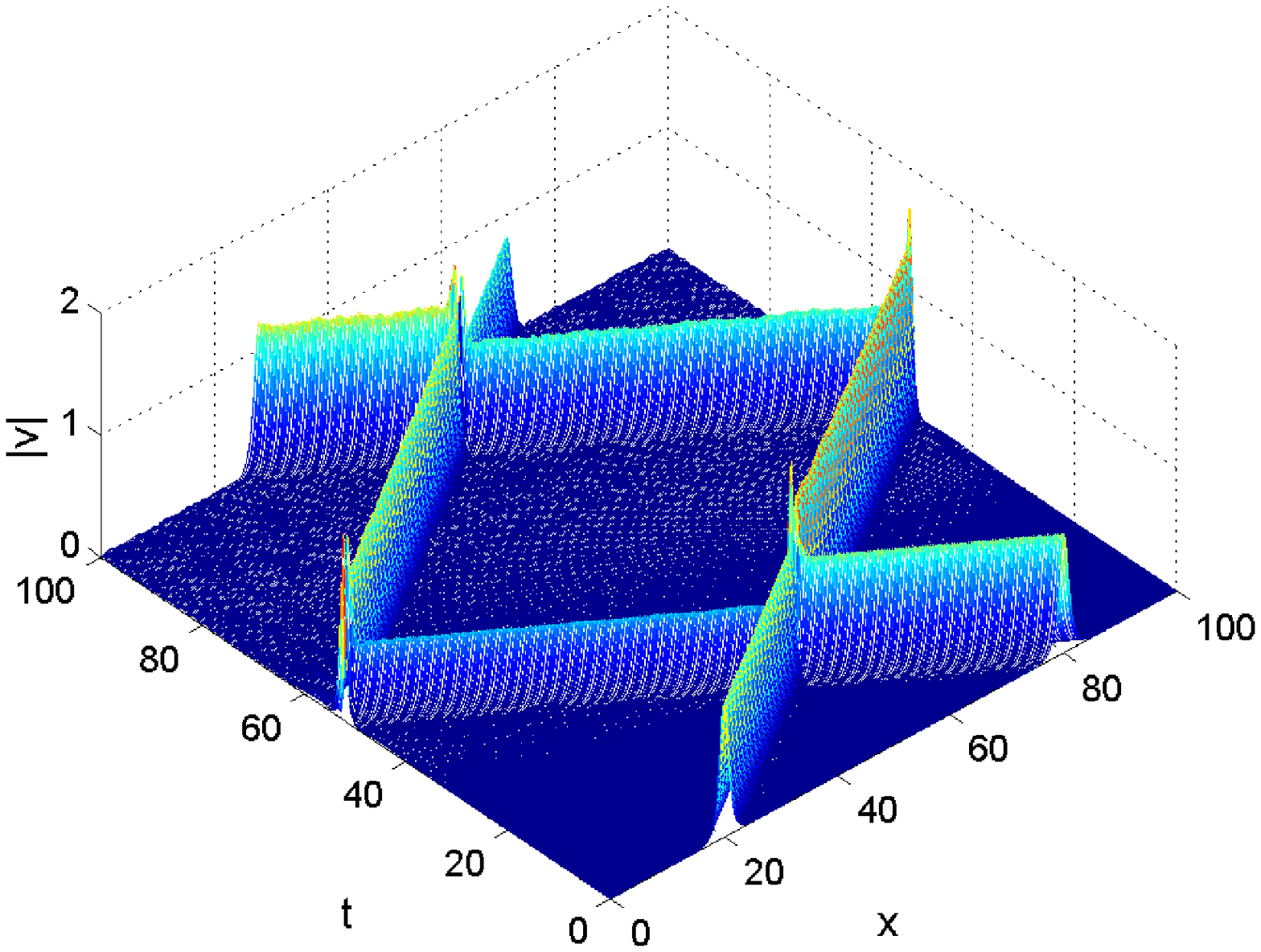}}
\end{tabular}
\caption{Numerical shapes of $u$ (left) and $v$ (right), obtained from ET4.}
\label{shape2S}
\end{figure}
Obviously, ET4 successfully simulates the collision of two solitons and the effects of boundaries on bisolitons. It preserves exactly the discrete
energy and conserves the discrete charges and momentum very well.
\end{myexp}

\begin{myexp}
\label{propo} The last experiment on the CNLS is the simulation of
the interaction among triple solitons with the initial
condition:
\begin{equation*}
\left\{
\begin{aligned}
&u(x,0)=\sum_{j=1}^{3}\sqrt{\frac{2a_{j}}{1+\beta}}sech(\sqrt{2a_{j}}(x-x_{j}))exp(i(\gamma_{j}-\alpha)(x-x_{j}),\\
&v(x,0)=\sum_{j=1}^{3}\sqrt{\frac{2a_{j}}{1+\beta}}sech(\sqrt{2a_{j}}(x-x_{j}))exp(i(\gamma_{j}+\alpha)(x-x_{j})).\\
\end{aligned}\right.
\end{equation*}
Here we also test another scheme associated with ET4. The only
difference between it and ET4 is that we evaluate the nonlinear
integrals in ET4 not by symbol calculation, but by the high-order GL
quadrature formula. In the case of ET4, the polynomials
are of degrees $6$, so we can calculate
them exactly by a $4$-point GL formula. To illustrate
the alternative scheme, we evaluate the nonlinear integrals by a
$3$-point GL formula:
\begin{equation*}
\begin{aligned}
&b_{1}=\frac{5}{18},b_{2}=\frac{4}{9},b_{3}=\frac{5}{18},\\
&c_{1}=\frac{1}{2}-\frac{\sqrt{15}}{10},c_{2}=\frac{1}{2},c_{3}=\frac{1}{2}+\frac{\sqrt{15}}{10}.\\
\end{aligned}
\end{equation*}
For example, the first nonlinear integral of \eqref{DCNLS} is
approximated by
\begin{equation*}
\begin{aligned}
&\int_{0}^{1}(((q_{1}^{\sigma})^{\cdot2}+(q_{2}^{\sigma})^{\cdot2})+\beta((q_{3}^{\sigma})^{\cdot2}+(q_{4}^{\sigma})^{\cdot2}))\cdot q_{2}^{\sigma}d\sigma\\
&\approx\sum_{i=1}^{3}b_{i}(((q_{1}^{c_{i}})^{\cdot2}+(q_{2}^{c_{i}})^{\cdot2})+\beta((q_{3}^{c_{i}})^{\cdot2}+(q_{4}^{c_{i}})^{\cdot2}))\cdot q_{2}^{c_{i}}.\\
\end{aligned}
\end{equation*}
For convenience, we denote the scheme by ET4GL6. Setting $\Delta
t=0.2,N=360,x_{0}=0,L=80,\alpha=0.5,\beta=\frac{2}{3},\gamma_{1}=1.5,\gamma_{2}=0.1,\gamma_{3}=-1.2,a_{1}=0.75,a_{2}=1,a_{3}=0.5,x_{1}=20,x_{2}=40,x_{3}=60,$
we compute it over the time interval [0,100]. Numerical results are
presented in Figs. \ref{EI3ET4}, \ldots, \ref{shape3S}. The
behaviours of ET4, ET4GL6 are very similar in conserving momentum.
Unsurprisingly, ET4 and ST4 preserve exactly the discrete global
energy and charges, respectively. However, ET4GL6 can conserve the
discrete energy in magnitude $10^{-6}$ , while ST4 only preserves the
energy in magnitude $10^{-4}$. So if we give more weight on the discrete
energy, ET4GL6 is a favourable scheme. In fact, when the nonlinear
integrals cannot be calculated exactly or have to be
integrated in very complicated forms, ETGL6 is a reasonable
alternative scheme.

\begin{figure}[ptb]
\centering
\begin{tabular}[c]{cccc}%
  \subfigure[Global energy (upper) and momentum (lower) errors ]{\includegraphics[width=8cm,height=6cm]{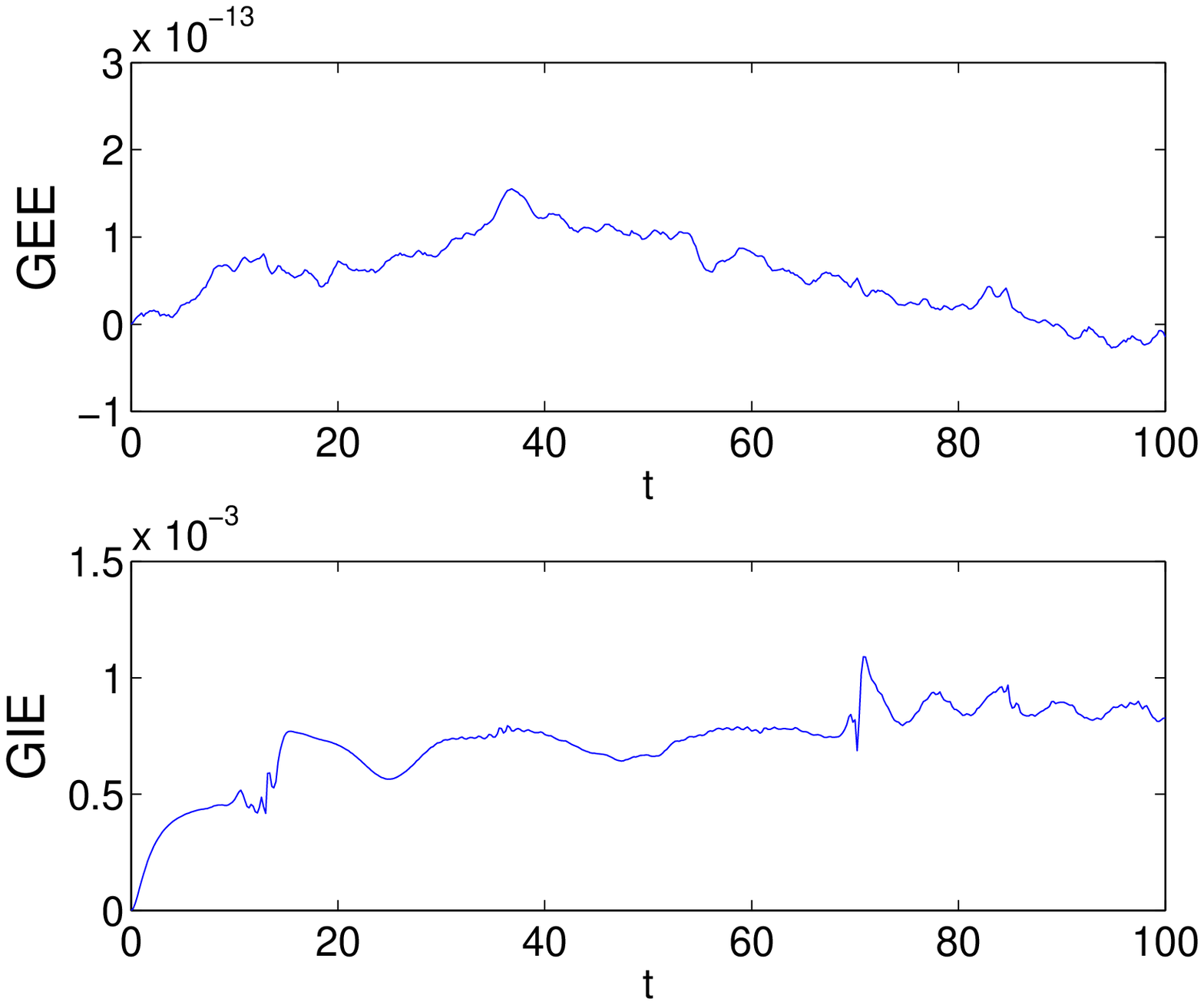}}
  \subfigure[Global charge errors of $u$ (upper) and $v$ (lower)]{\includegraphics[width=8cm,height=6cm]{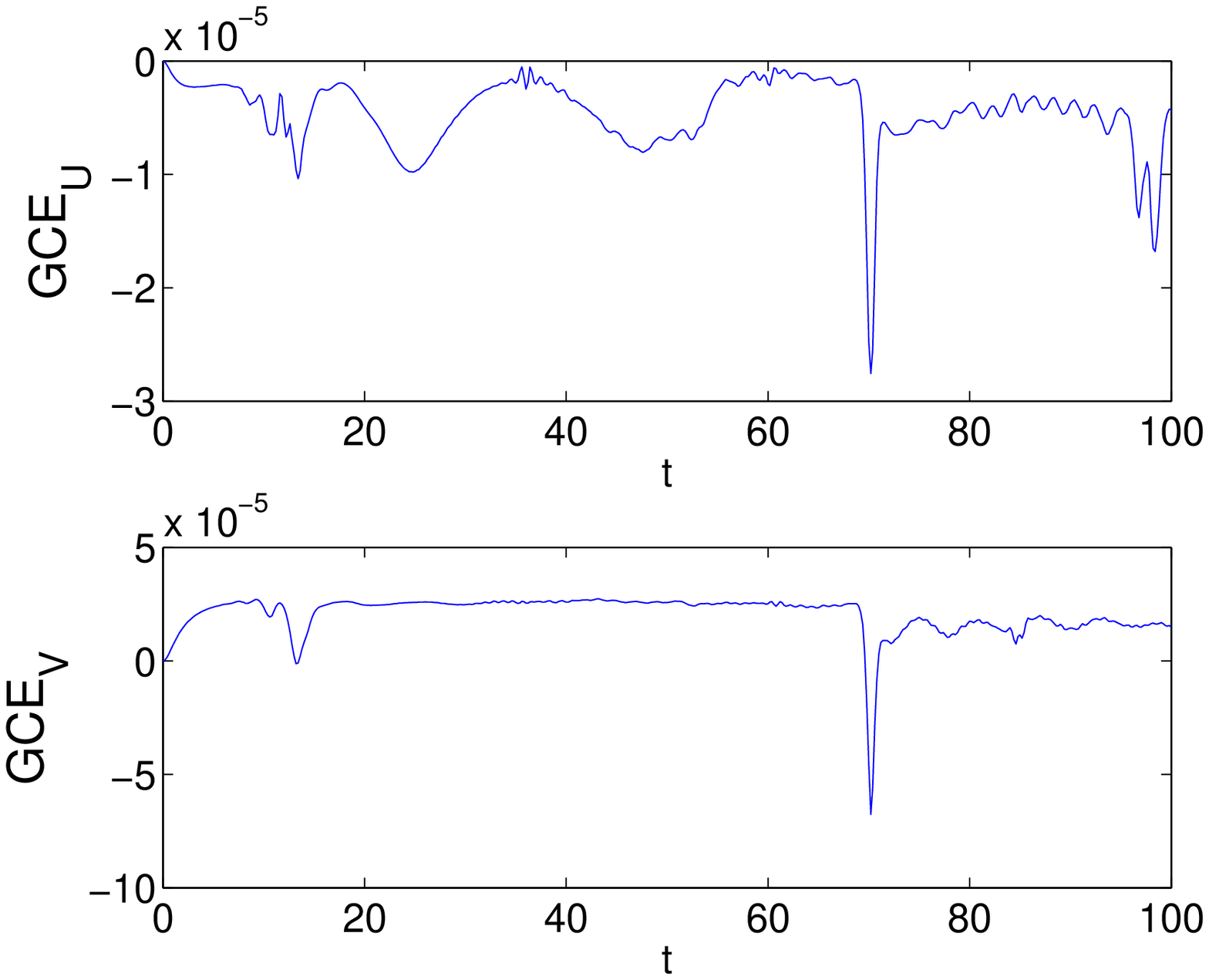}}
\end{tabular}
\caption{Errors obtained by ET4, $\Delta t=0.2,\ N=360$.}
\label{EI3ET4}
\end{figure}

\begin{figure}[ptb]
\centering
\begin{tabular}[c]{cccc}%
  \subfigure[Global energy (upper) and momentum (lower) errors ]{\includegraphics[width=8cm,height=6cm]{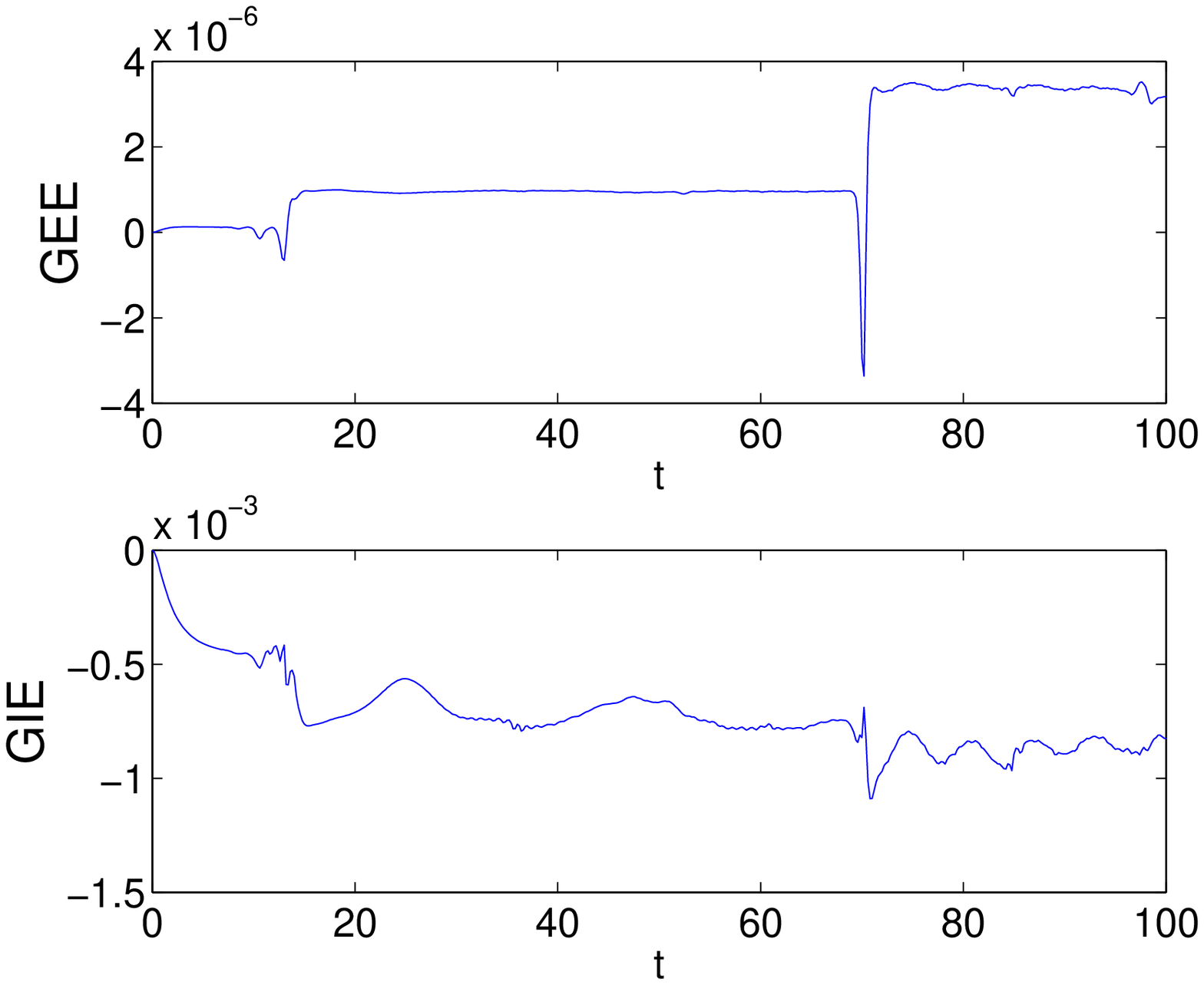}}
  \subfigure[Global charge errors of $u$ (upper) and $v$ (lower)]{\includegraphics[width=8cm,height=6cm]{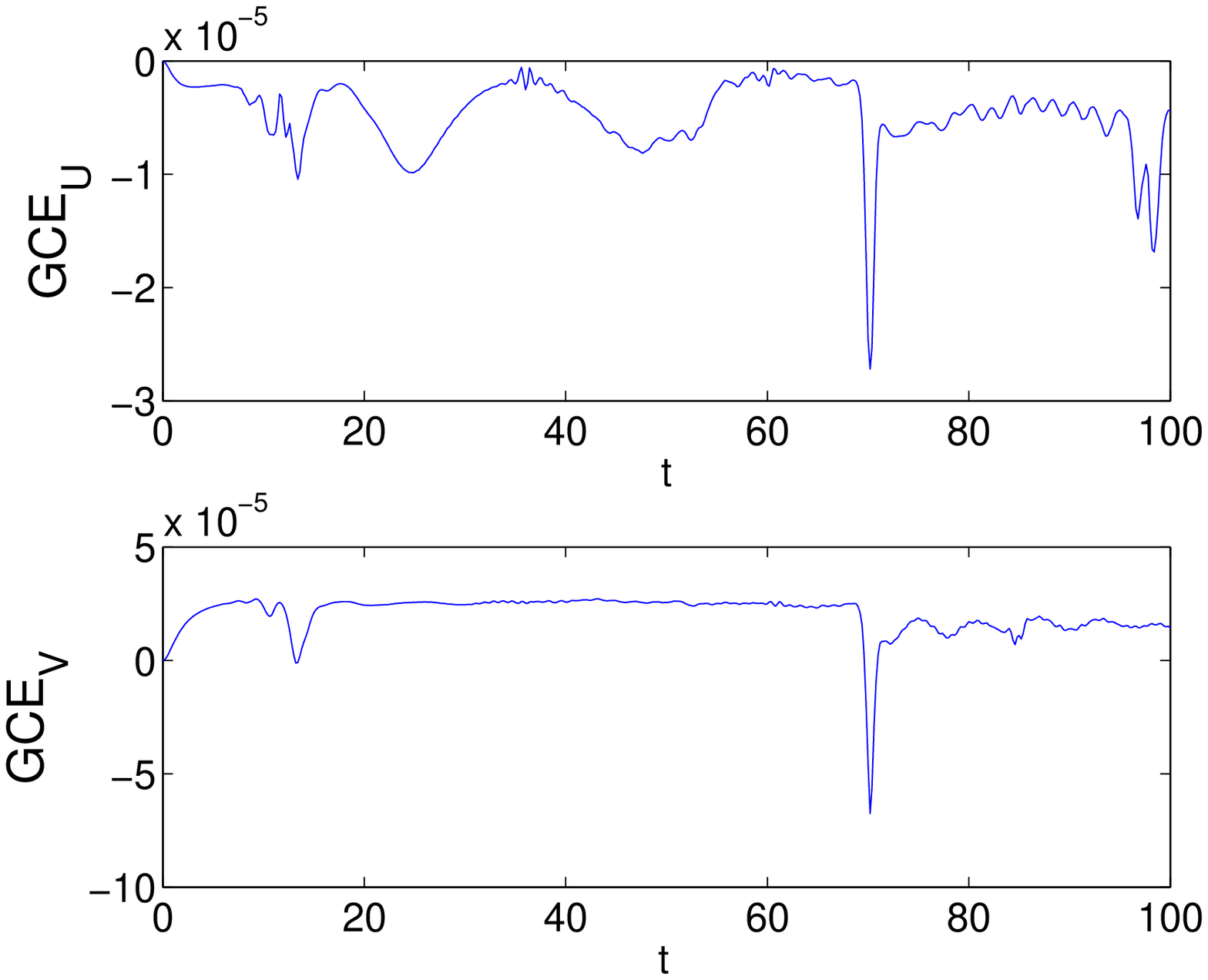}}
\end{tabular}
\caption{Errors obtained by ET4GL6, $\Delta t=0.2,\ N=360$.}
\label{EI3GL6}
\end{figure}

\begin{figure}[ptb]
\centering
\begin{tabular}[c]{cccc}%
  \subfigure[Global energy (upper) and momentum (lower) errors ]{\includegraphics[width=8cm,height=6cm]{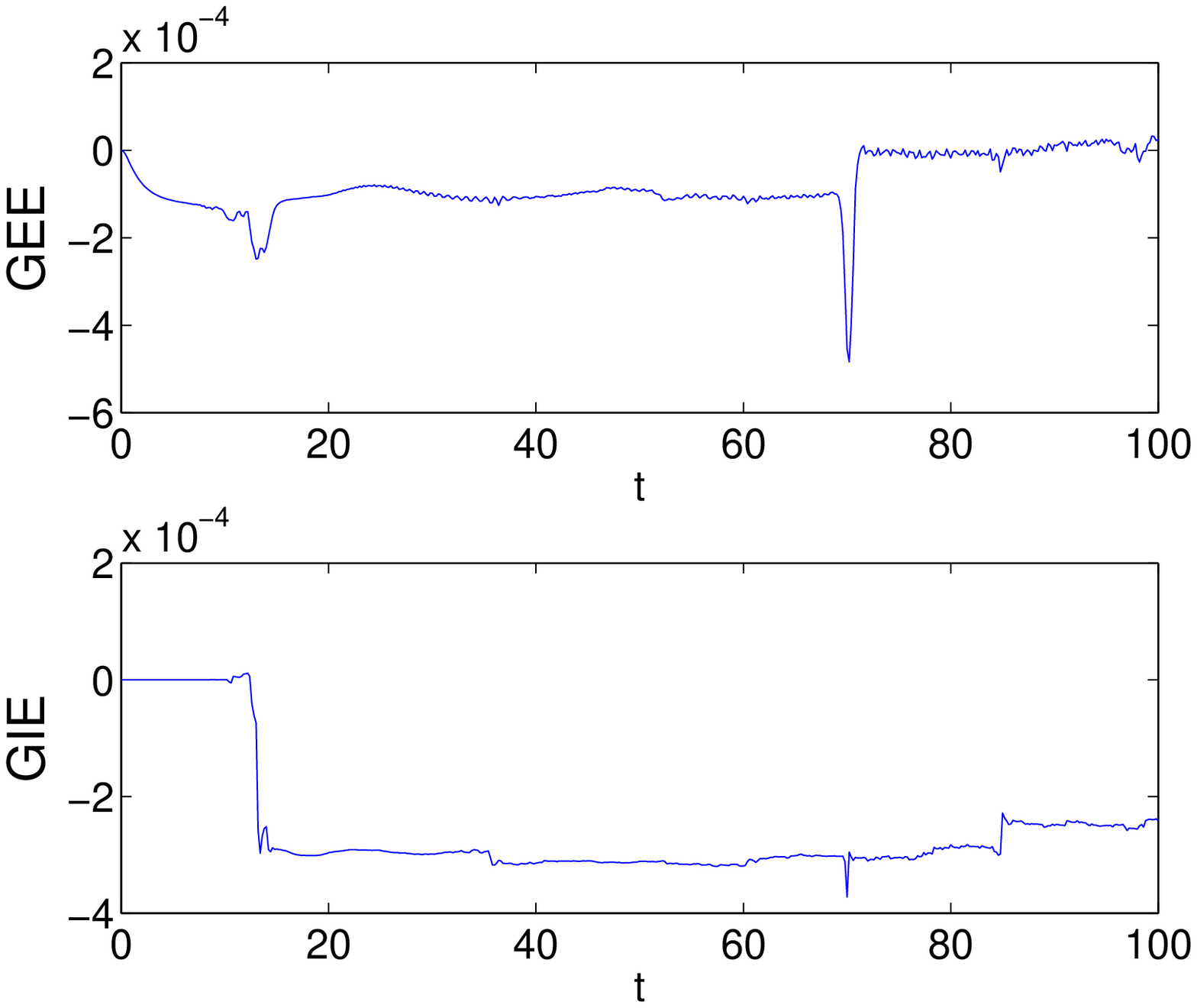}}
  \subfigure[Global charge errors of $u$ (upper) and $v$ (lower)]{\includegraphics[width=8cm,height=6cm]{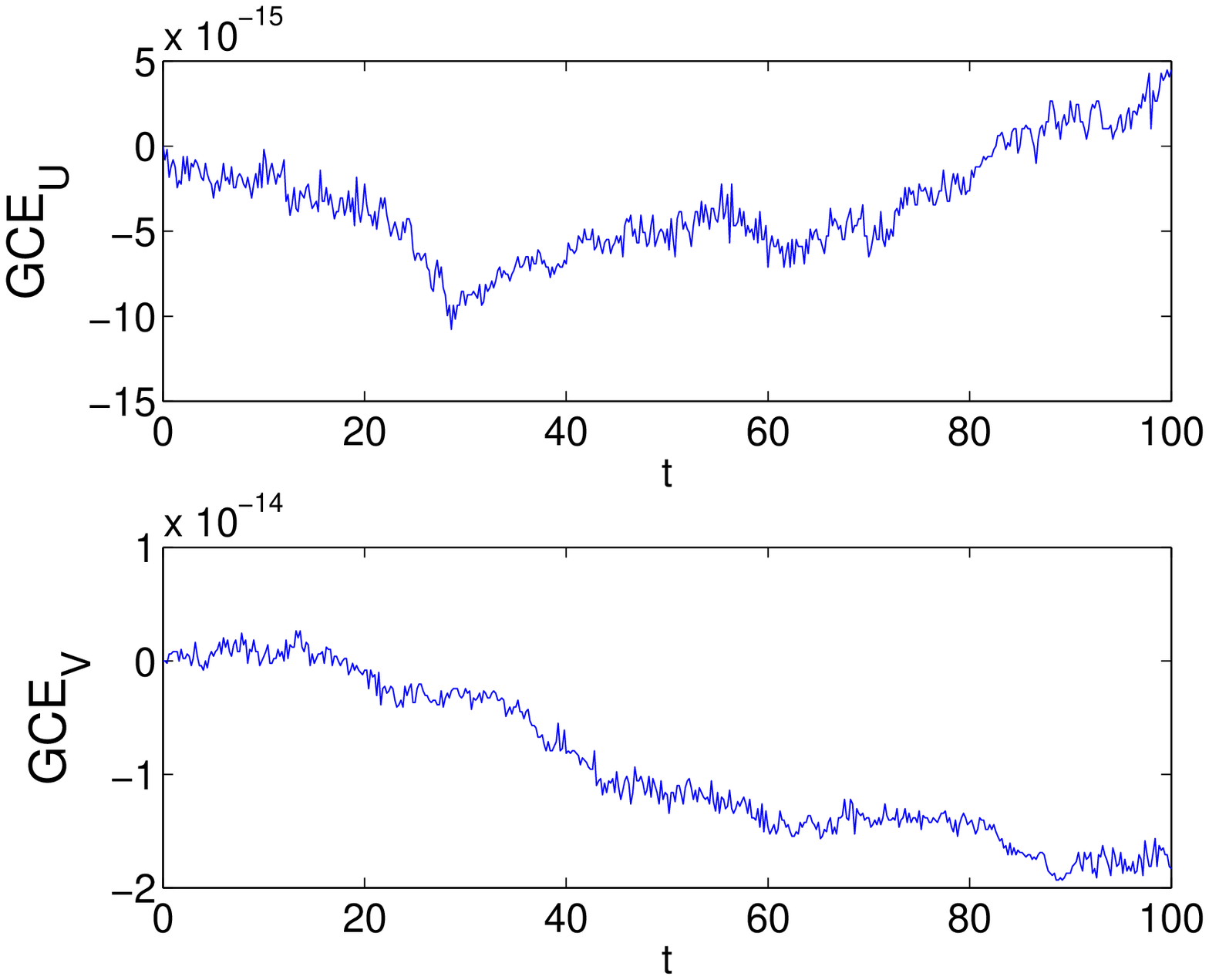}}
\end{tabular}
\caption{Errors obtained by ST4,  $\Delta t=0.2,\ N=360$.}
\label{EIGL3S}
\end{figure}

\begin{figure}[ptb]
\centering
\begin{tabular}[c]{cccc}%
  \subfigure[]{\includegraphics[width=8cm,height=7cm]{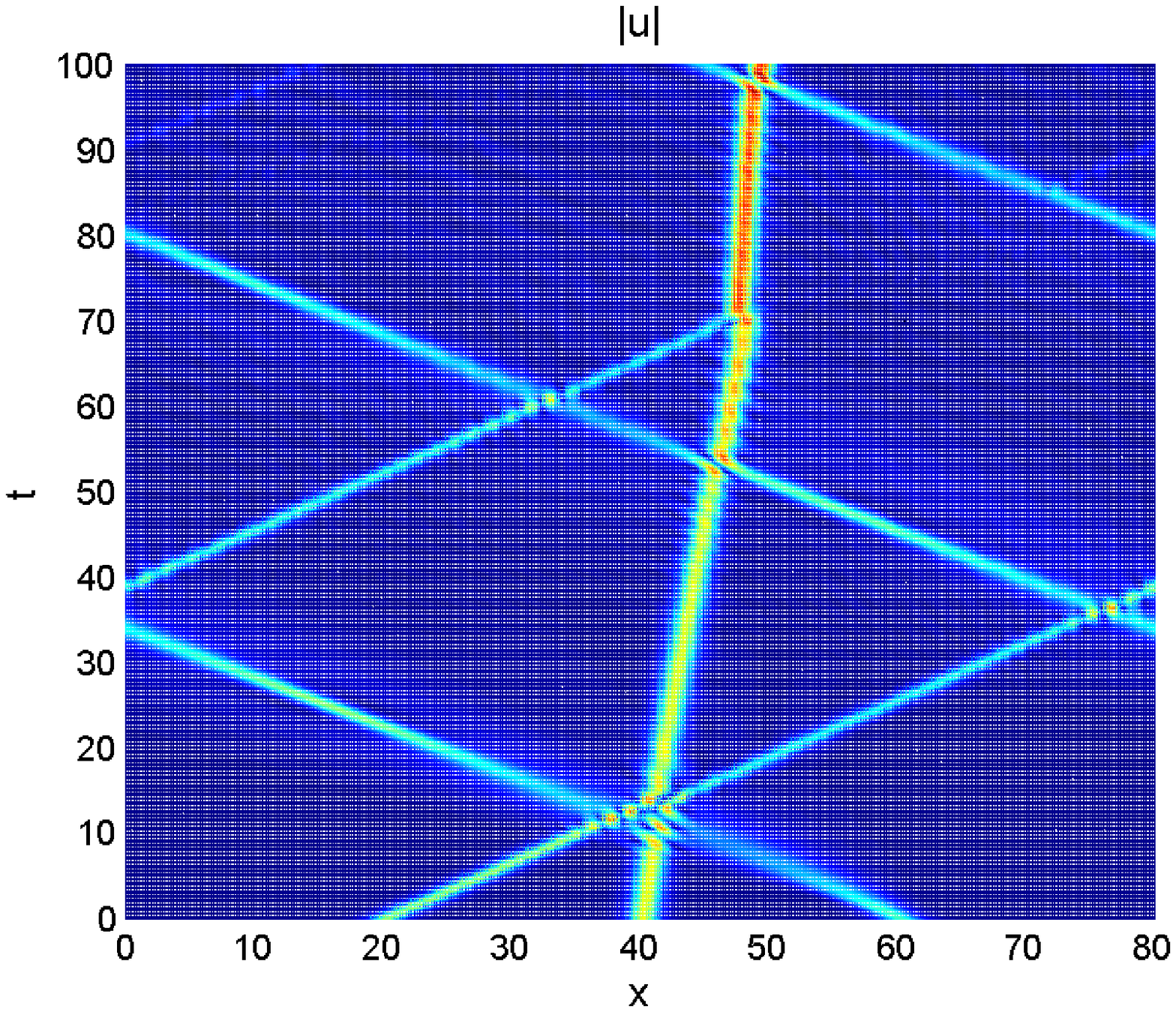}}
  \subfigure[]{\includegraphics[width=8cm,height=7cm]{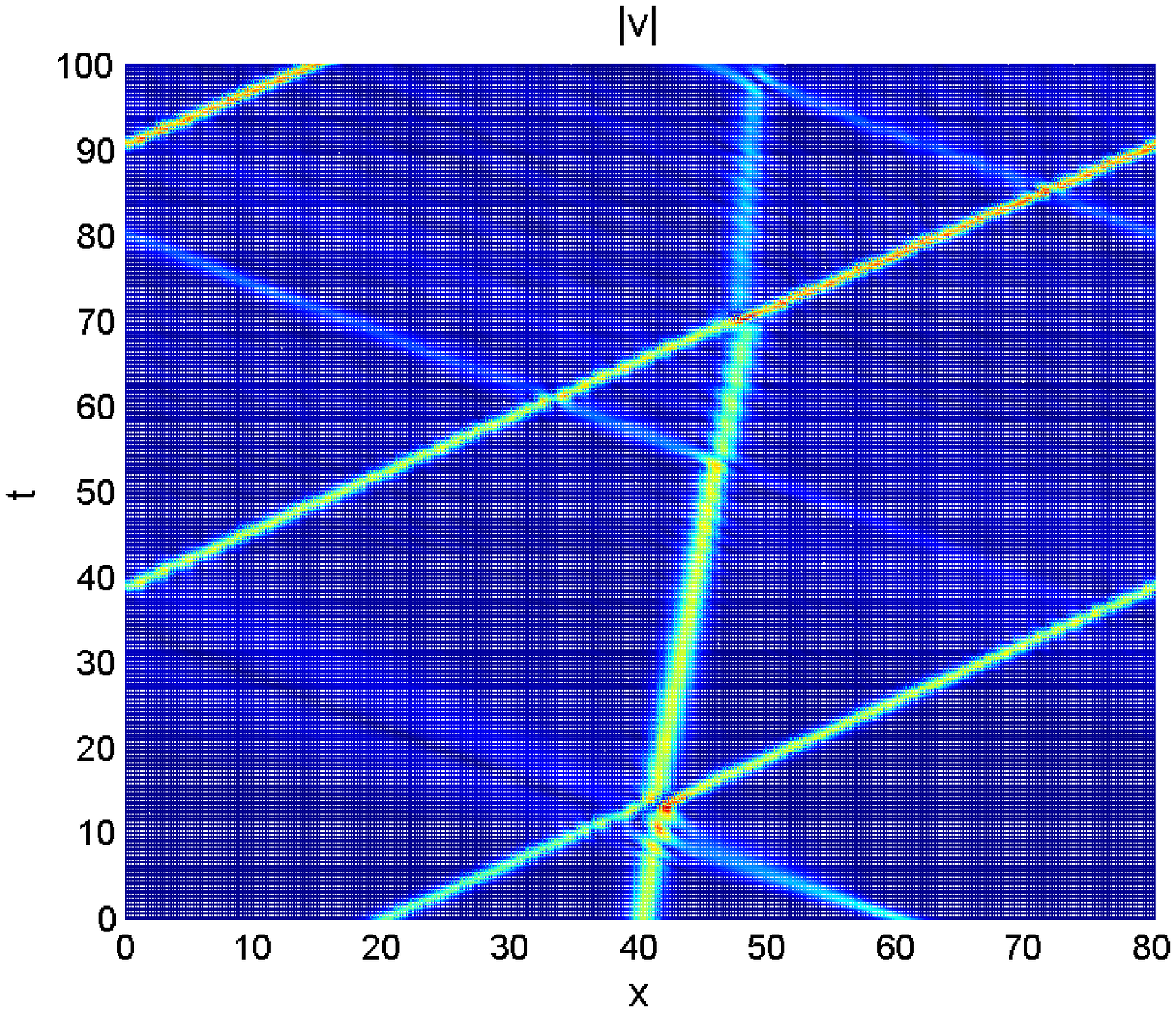}}
\end{tabular}
\caption{Numerical solitons of $u,v$, obtained by ET4.}
\label{shape3S}
\end{figure}
\end{myexp}

\section{Numerical experiments for 2D nonlinear Schr\"{o}dinger equations}\label{Numer_Experi2}
In this section, we apply the CRK method of second-order (i.e. average
vector field method) to t-direction and the pseudospectral method to $x$
and $y$ directions. This scheme is denoted by ET2. To illustrate our
method, we will compare it with another prominent traditional scheme
which is obtained by the implicit midpoint temporal discretization
and the pseudospectral spatial discretization(ST2). If \eqref{HDNLS} is
linear, our scheme ET2 is the same as ST2. Hence we will not give
numerical examples of 2D linear Schr\"{o}dinger equations.

The boundary condition is always taken
to be periodic:
\begin{equation}
u(x_{l},y,t)=u(x_{r},y,t),u(x,y_{l},t)=u(x,y_{r},t).
\end{equation}
And the grid numbers of $x$ and $y$ directions are denoted by $N$
and $M$, respectively.

The discrete global charge $CH$ will still be taken into account:
\begin{equation*}
CH^{n}=\Delta x\Delta
y\sum_{j=0}^{N-1}\sum_{l=0}^{M-1}((p_{jl}^{n})^2+(q_{jl}^{n})^2),
\end{equation*}
where
$$CH^{n}\approx\int_{x_{l}}^{x_{r}}\int_{y_{l}}^{y_{r}}(p(x,y,n\Delta t)^{2}+q(x,y,n\Delta t)^{2})dxdy.$$

Besides, the residuals in the ECL \eqref{2DNLSECL} are defined as:
\begin{equation*}
R_{jl}^{n}=\frac{E_{jl}^{n+1}-E_{jl}^{n}}{\Delta t}+\sum_{k=0}^{N-1}(D_{x})_{jk}\bar{F}_{jk,l}+\sum_{m=0}^{M-1}(D_{y})_{lm}\bar{G}_{j,lm},
\end{equation*}
for $j=0,1,\ldots,N-1,\quad l=0,1,\ldots,M-1.$

In this section, we calculate $\mathbf{R}^{n}$ : the residual with the maximum absolute value at the time level $n\Delta t$.

\begin{myexp}\label{7.1}
Let
$\alpha=\frac{1}{2},V(\xi,x,y)=V_{1}(x,y)\xi+\frac{1}{2}\beta\xi^{2},$
then \eqref{HDNLS} becomes the Gross--Pitaevskii (GP) equation:
\begin{equation}\label{GP}
i\psi_{t}+\frac{1}{2}(\psi_{xx}+\psi_{yy})+V_{1}(x,y)\psi+\beta|\psi|^{2}\psi=0.
\end{equation}
This equation is an important mean-field model for the dynamics of a
dilute gas Bose-Einstein condensate (BEC) (see, e.g.
\cite{Deco2001}). The parameter $\beta$ determines whether
\eqref{GP} is attractive ($\beta>0$) or repulsive ($\beta<0$).

Note that equation \eqref{GP} is no longer multi-symplectic, the scheme ST2 is only symplectic in time.
We first consider the attractive case $\beta=1$. The external potential $V_{1}$ is:
\begin{equation*}
V_{1}(x,y)=-\frac{1}{2}(x^{2}+y^{2})-2exp(-(x^{2}+y^{2})).
\end{equation*}
The initial condition is given by:
\begin{equation*}
\psi(x,y,0)=\sqrt{2}exp(-\frac{1}{2}(x^{2}+y^{2})).
\end{equation*}
This IVP has the exact solution (see, e.g.
\cite{Antar2013}):
\begin{equation*}
\psi(x,y,t)=\sqrt{2}exp(-\frac{1}{2}(x^{2}+y^{2}))exp(-it).
\end{equation*}
For the same reason in the experiment \ref{5.1}, we set the spatial
domain as $x_{l}=-6,x_{r}=6,y_{l}=-6,y_{r}=6.$ The temporal stepzie
is chosen as $\Delta t=0.15,0.1,0.05$, respectively. Fixing the
number of spatial grids $N=M=42,$ {we compute the numerical solution
over the time interval $[0,45]$}. The numerical results of ET2 and
ST2 are shown in Figs. \ref{GME1}, \ldots, \ref{GECL}.
\begin{figure}[ptb]
\centering
\begin{tabular}[c]{cccc}%
  \subfigure[ $\Delta t=0.10$]{\includegraphics[width=5cm,height=4cm]{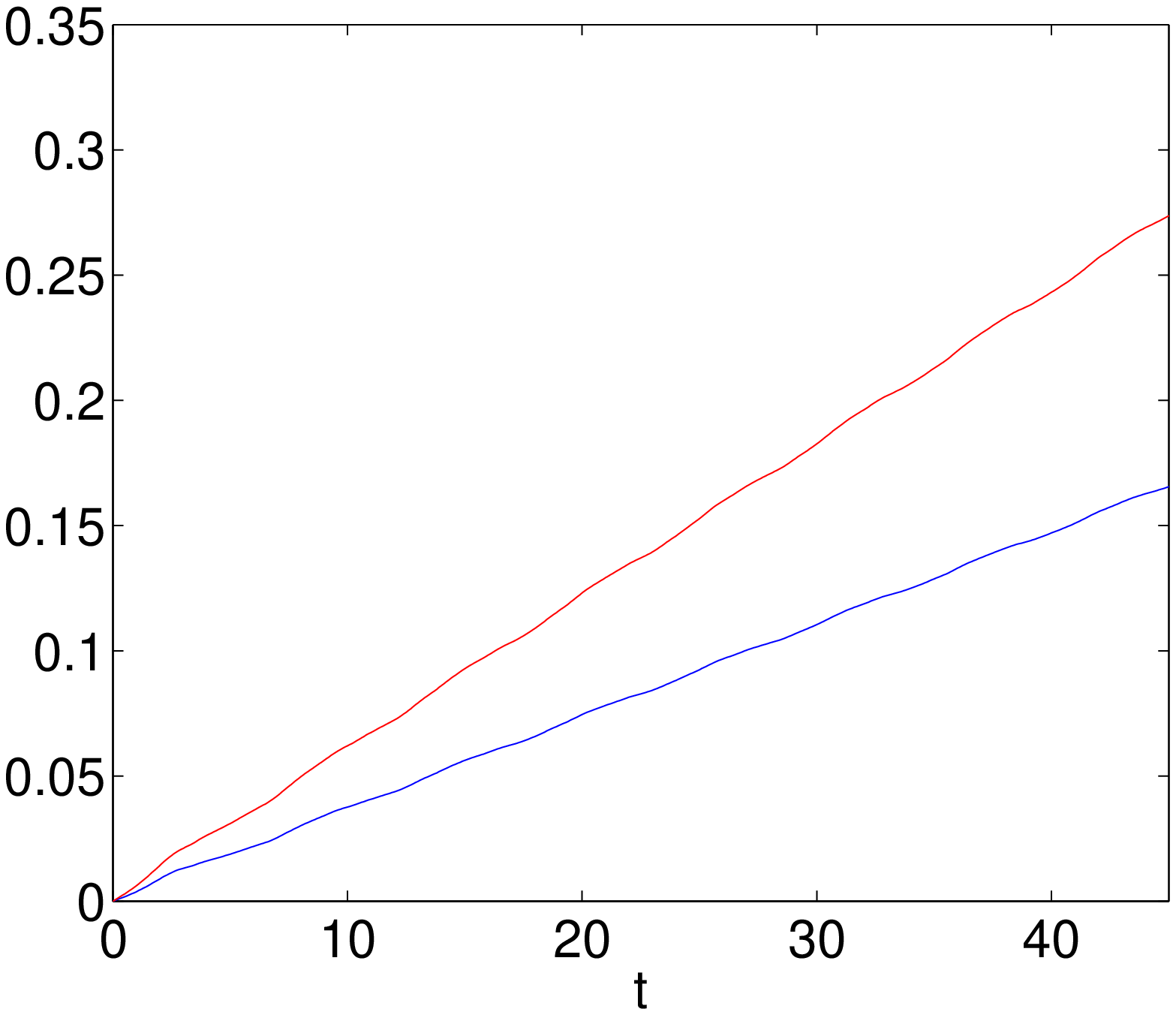}}
  \subfigure[ $\Delta t=0.08$]{\includegraphics[width=5cm,height=4cm]{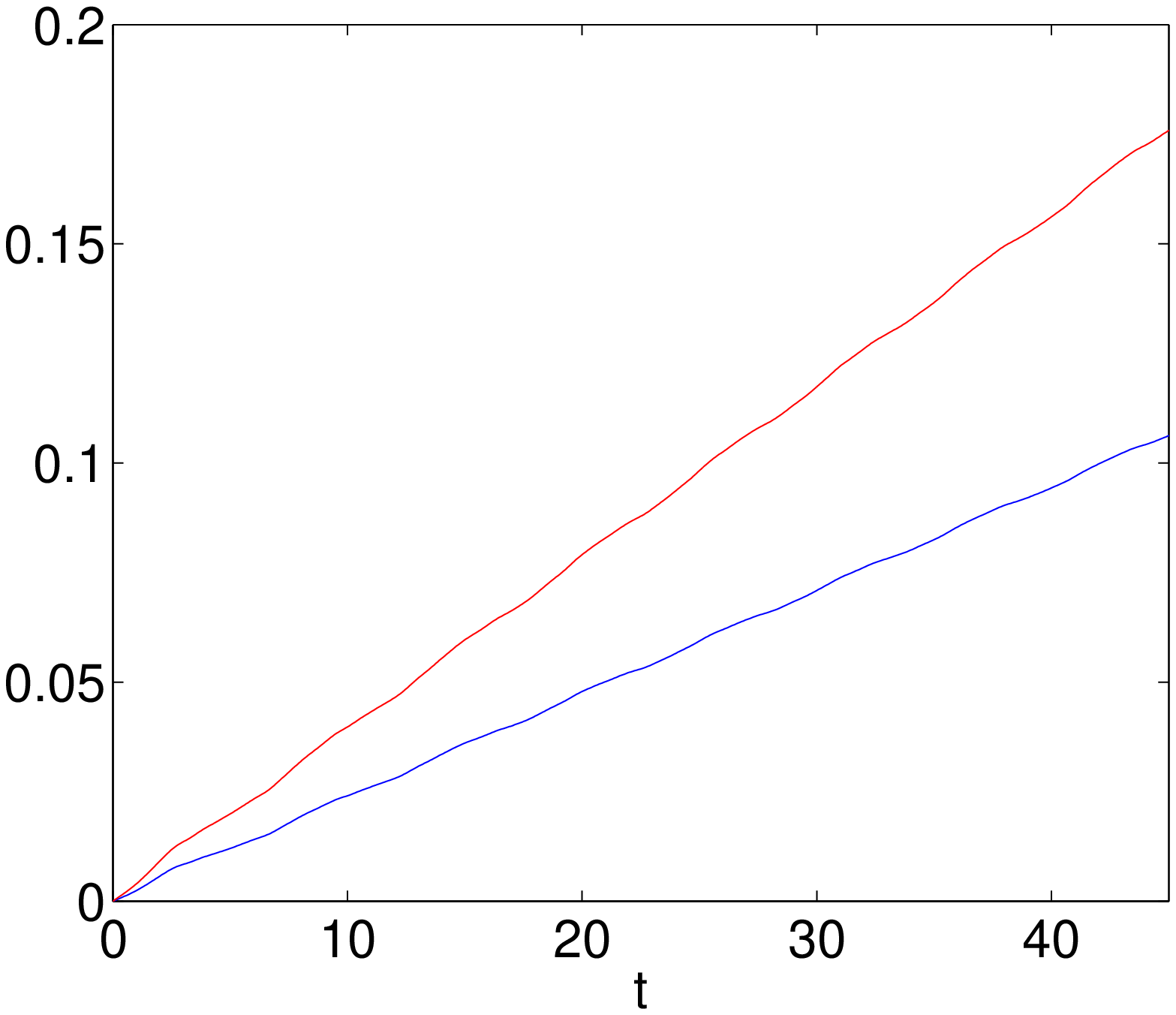}}
  \subfigure[ $\Delta t=0.05$]{\includegraphics[width=5cm,height=4cm]{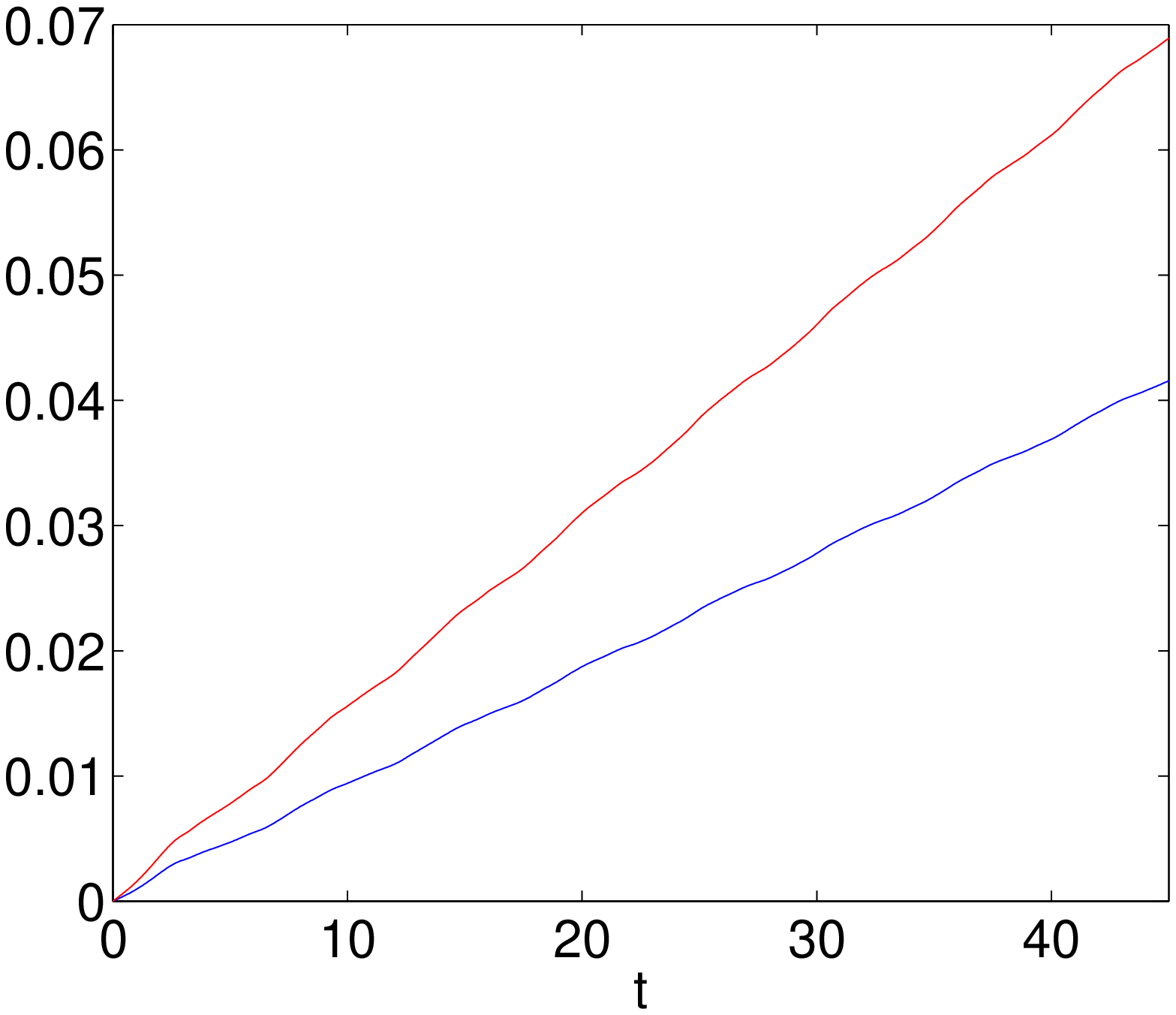}}
\end{tabular}
\caption{Maximum global errors. The blue curves are {\color{red}the}
results of ET2, the red curves are {\color{red}the} results of ST2.}
\label{GME1}
\end{figure}
\begin{figure}[ptb]
\centering
\begin{tabular}[c]{cccc}%
  \subfigure[]{\includegraphics[width=8cm,height=4cm]{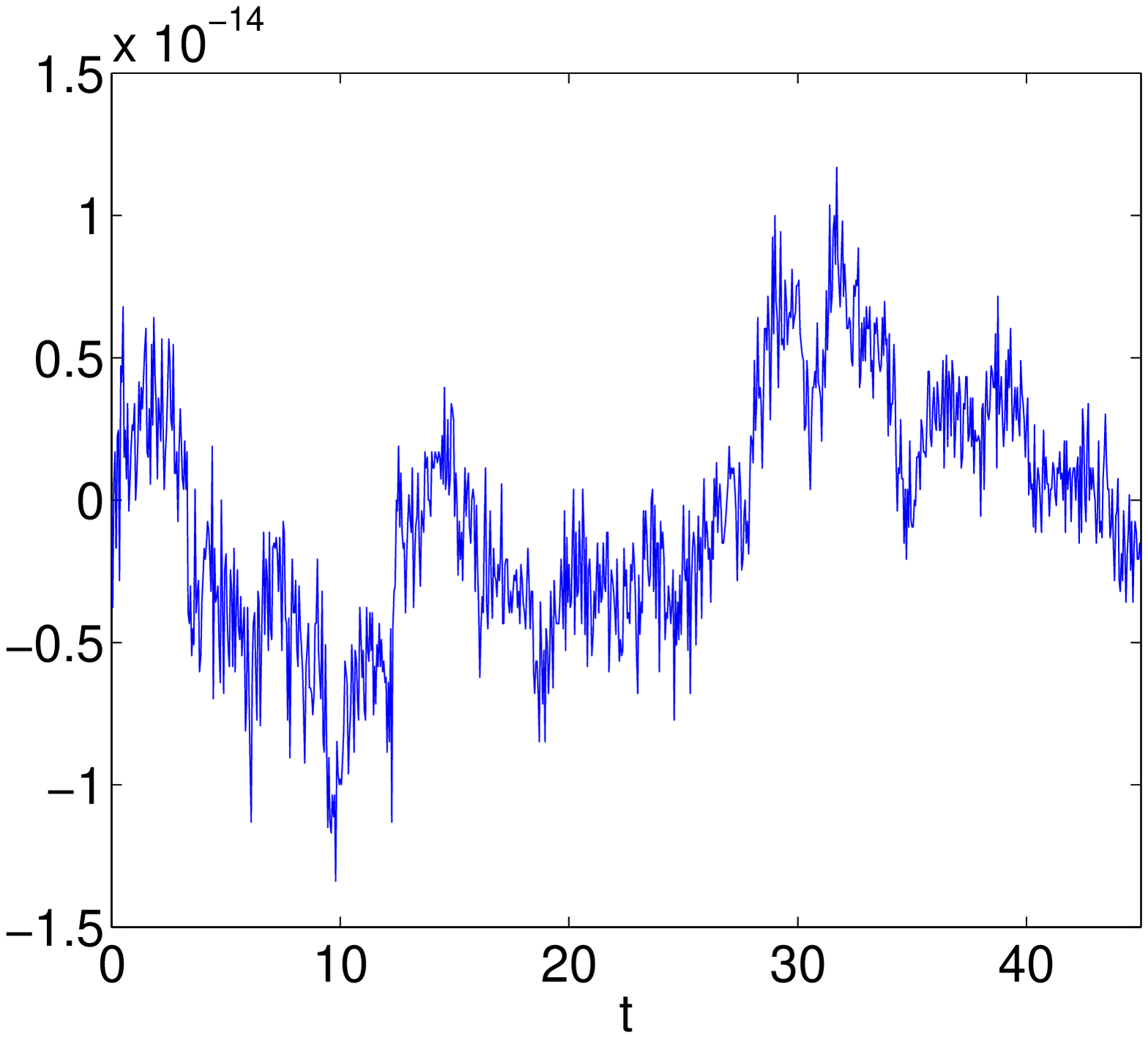}}
  \subfigure[]{\includegraphics[width=8cm,height=4cm]{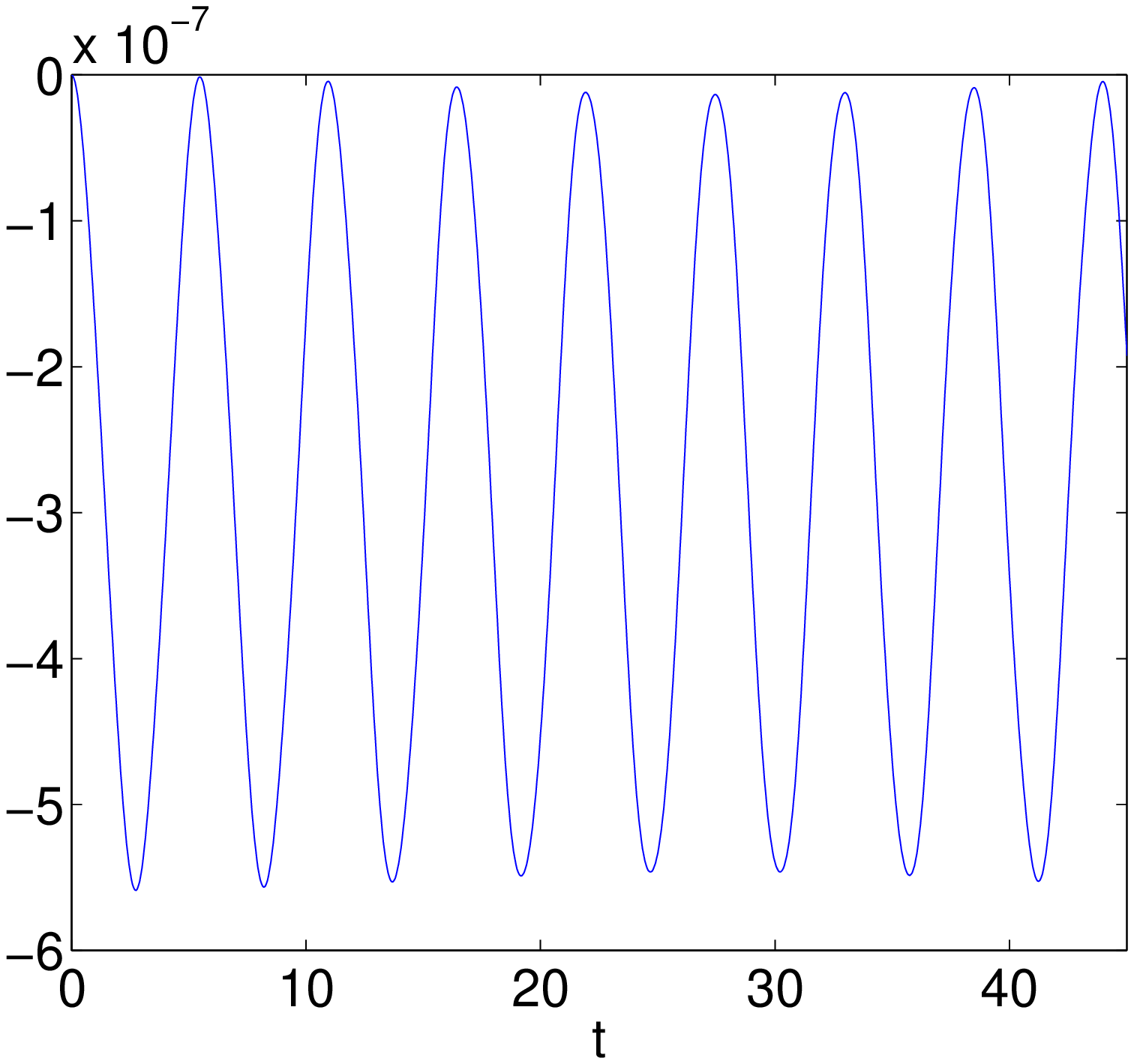}}
\end{tabular}
\caption{Global energy errors of ET2 (left) and ST2 (right), $\Delta
t=0.05$.}
\label{GEE}
\end{figure}
\begin{figure}[ptb]
\centering
\begin{tabular}[c]{cccc}%
  \subfigure[]{\includegraphics[width=8cm,height=4cm]{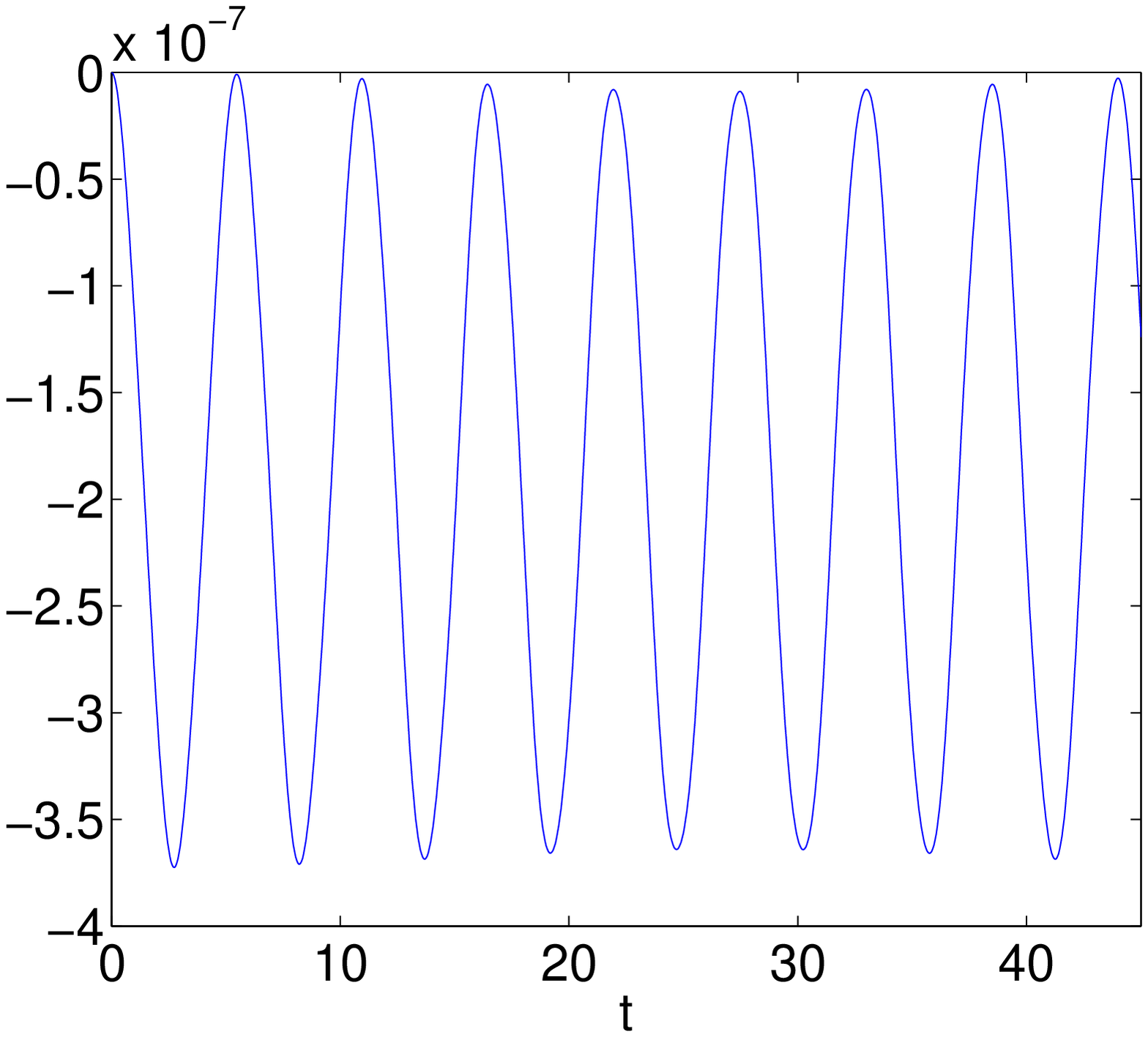}}
  \subfigure[]{\includegraphics[width=8cm,height=4cm]{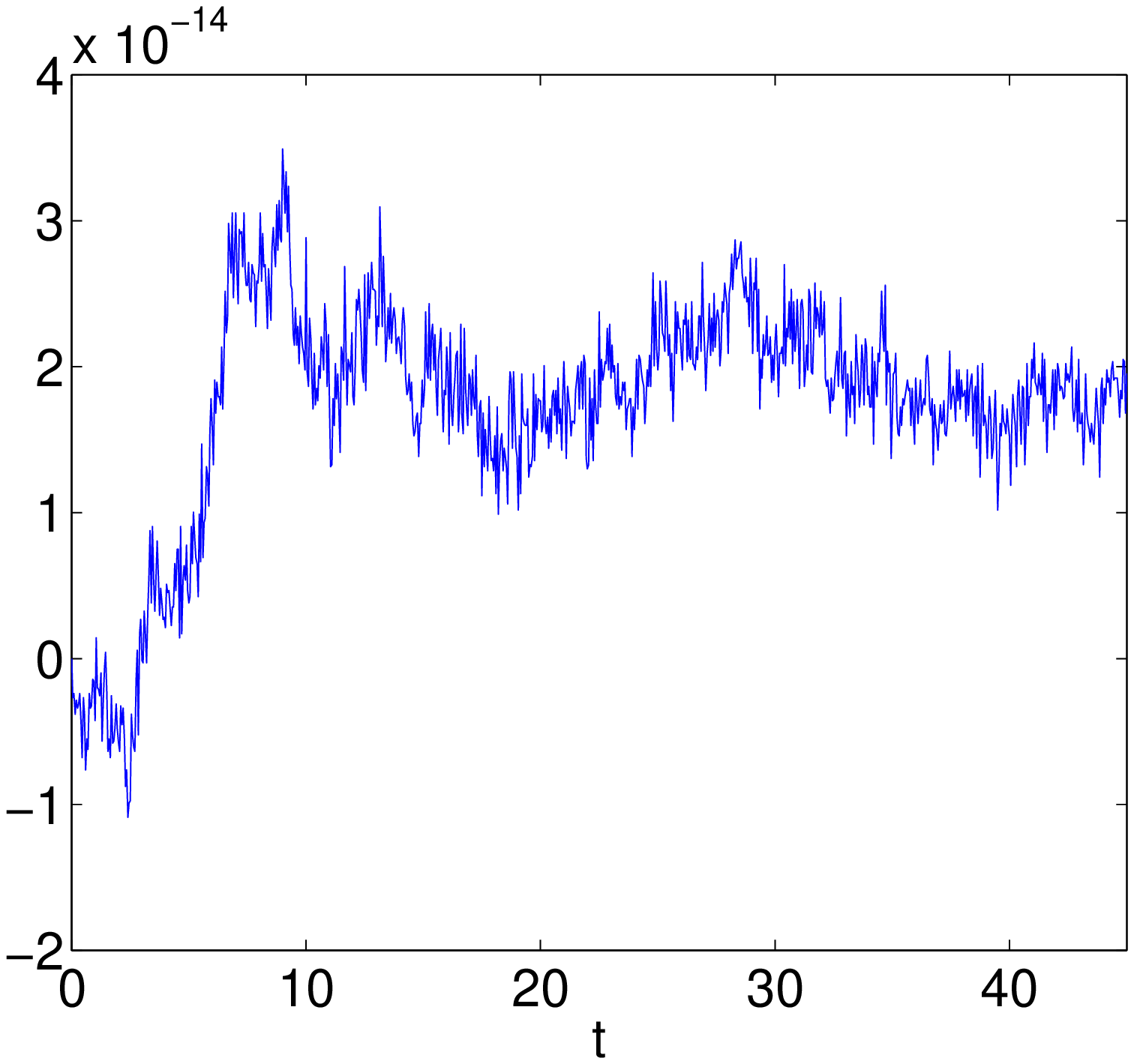}}
\end{tabular}
\caption{Global charge errors of ET2 (left) and ST2 (right), $\Delta
t=0.05$.}
\label{GCE}
\end{figure}
\begin{figure}[ptb]
\centering
\begin{tabular}[c]{cccc}%
  \subfigure[]{\includegraphics[width=8cm,height=4cm]{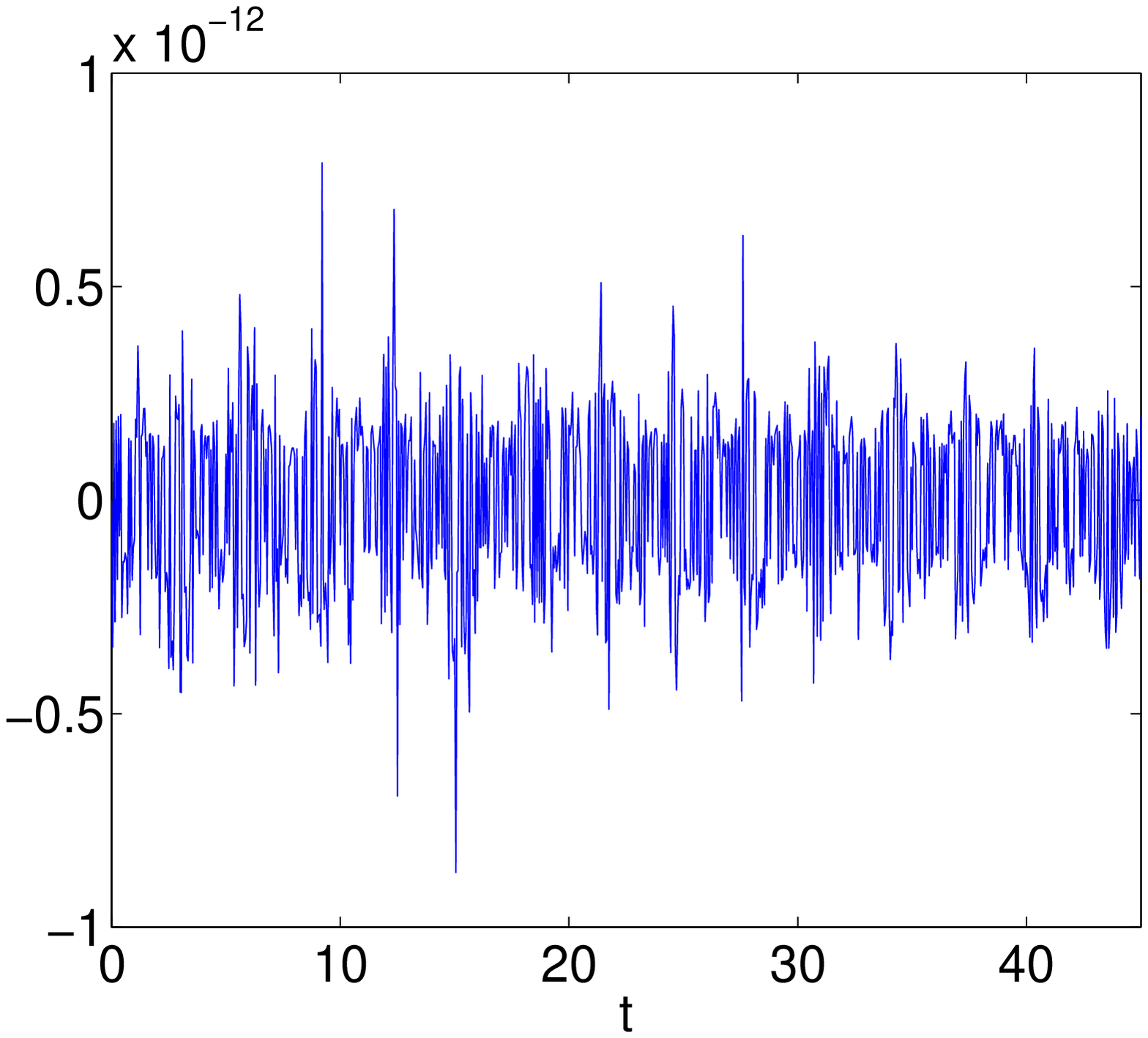}}
  \subfigure[]{\includegraphics[width=8cm,height=4cm]{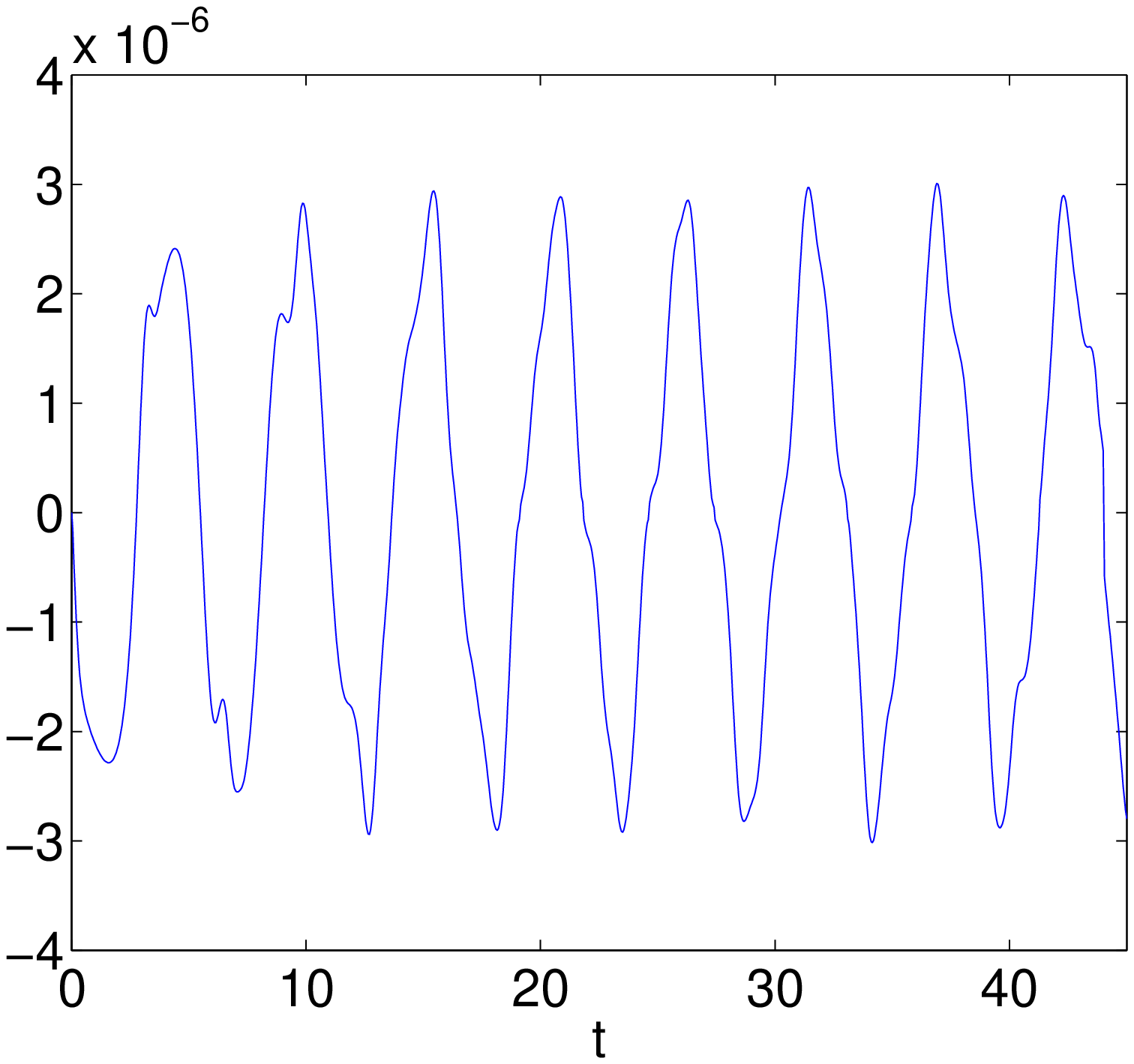}}
\end{tabular}
\caption{Maximum residuals ($\mathbf{R}$) of ET2 (left) and ST2 (right) in the ECL, $\Delta
t=0.05$.}
\label{GECL}
\end{figure}

\begin{figure}[ptb]\label{generalKGE}
\centering
\begin{tabular}[c]{cccc}%
  \subfigure[]{\includegraphics[width=8cm,height=6cm]{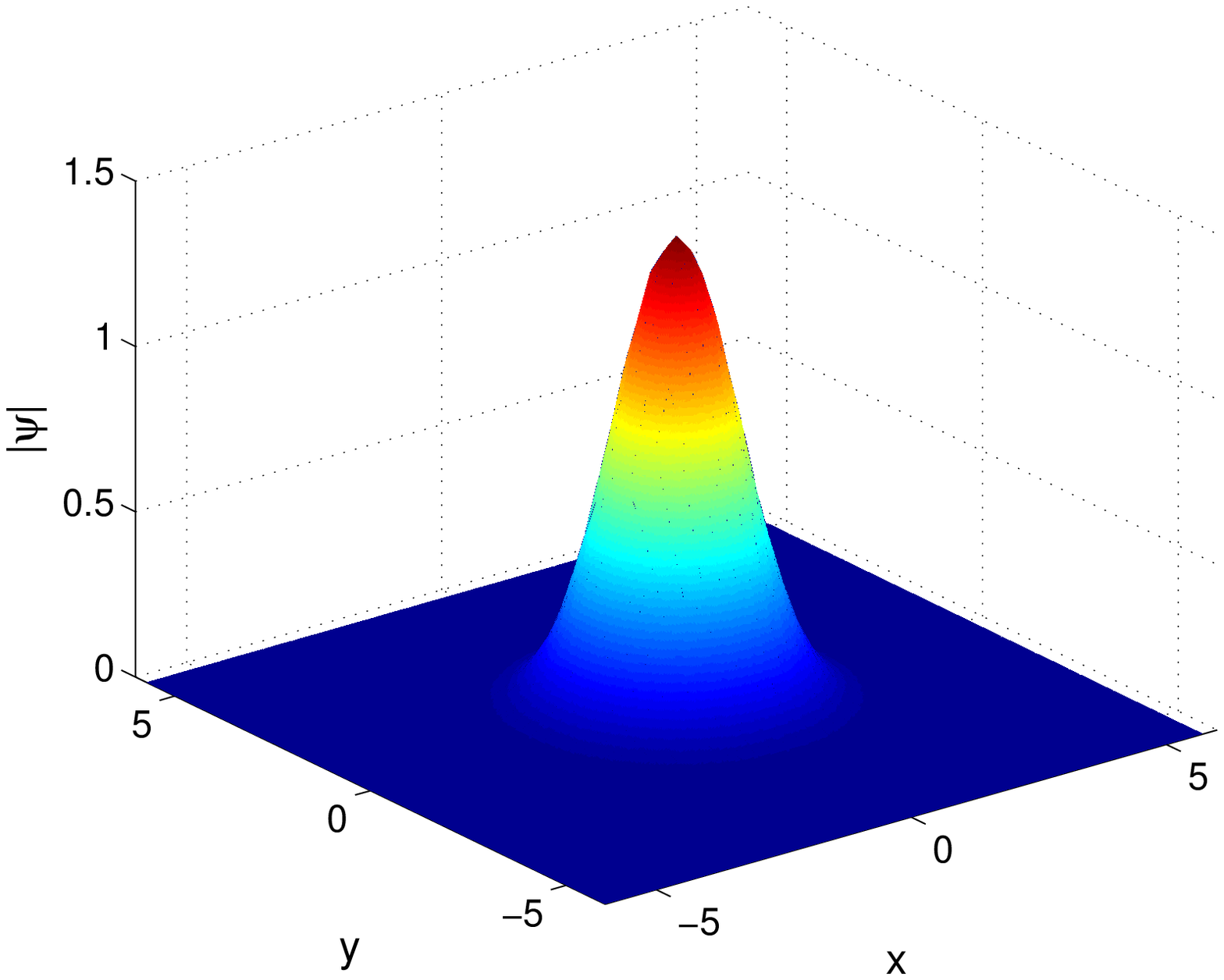}}
  \subfigure[]{\includegraphics[width=8cm,height=6cm]{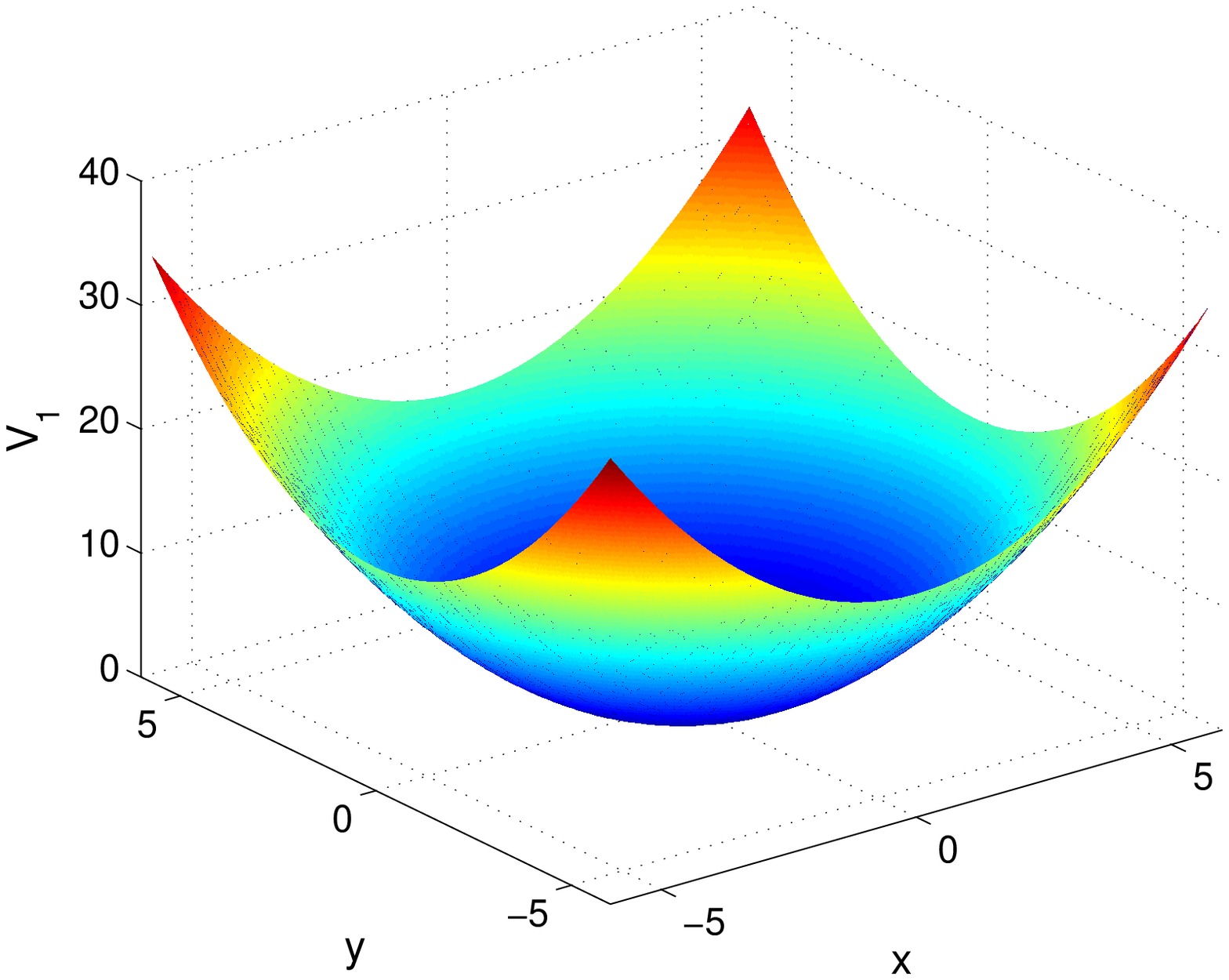}}
\end{tabular}
\caption{Shapes of the solution (left) and the potential $V_{1}$
(right).}
\label{2Dshape}
\end{figure}
From the results, we can see that ET2 conserves both the global energy and the ECL exactly while its global charge errors
oscillates in magnitude $10^{-7}.$ On the other hand, ST2 preserves the global charge accurately while its global energy errors
oscillates in magnitude $10^{-7}$ and its maximum residuals in the ECL oscillates in magnitude $10^{-6}.$ However, the maximum
global errors of ST2 are twice as large as that of ET2 under the three different $\Delta t$.
\end{myexp}

\begin{myexp}\label{7.2}
Let $V_{1}(x,y)=-\frac{1}{2}(x^{2}+y^{2}),\beta=-2$.
Given the initial condition
\begin{equation*}
\psi(x,y,0)=\frac{1}{\sqrt{\pi}}exp(-\frac{1}{2}(x^{2}+y^{2})),
\end{equation*}
we now consider the repulsive GP equation in space
$[-8,8]\times[-8,8]$ (see \cite{Kong2011}). Let $N=M=36, \Delta
t=0.1, $ we compute the numerical solution over the time interval
$[0,200].$ The results are plotted in Figs.
\ref{2DNLSR}, \ref{2DNLSR2}. Obviously, ET2 still show the eminent
long-term behaviour dealing with high dimensional problems.
\begin{figure}[ptb]
\centering
\begin{tabular}[c]{cccc}%
  \subfigure[Errors in global energy]{\includegraphics[width=5cm,height=4cm]{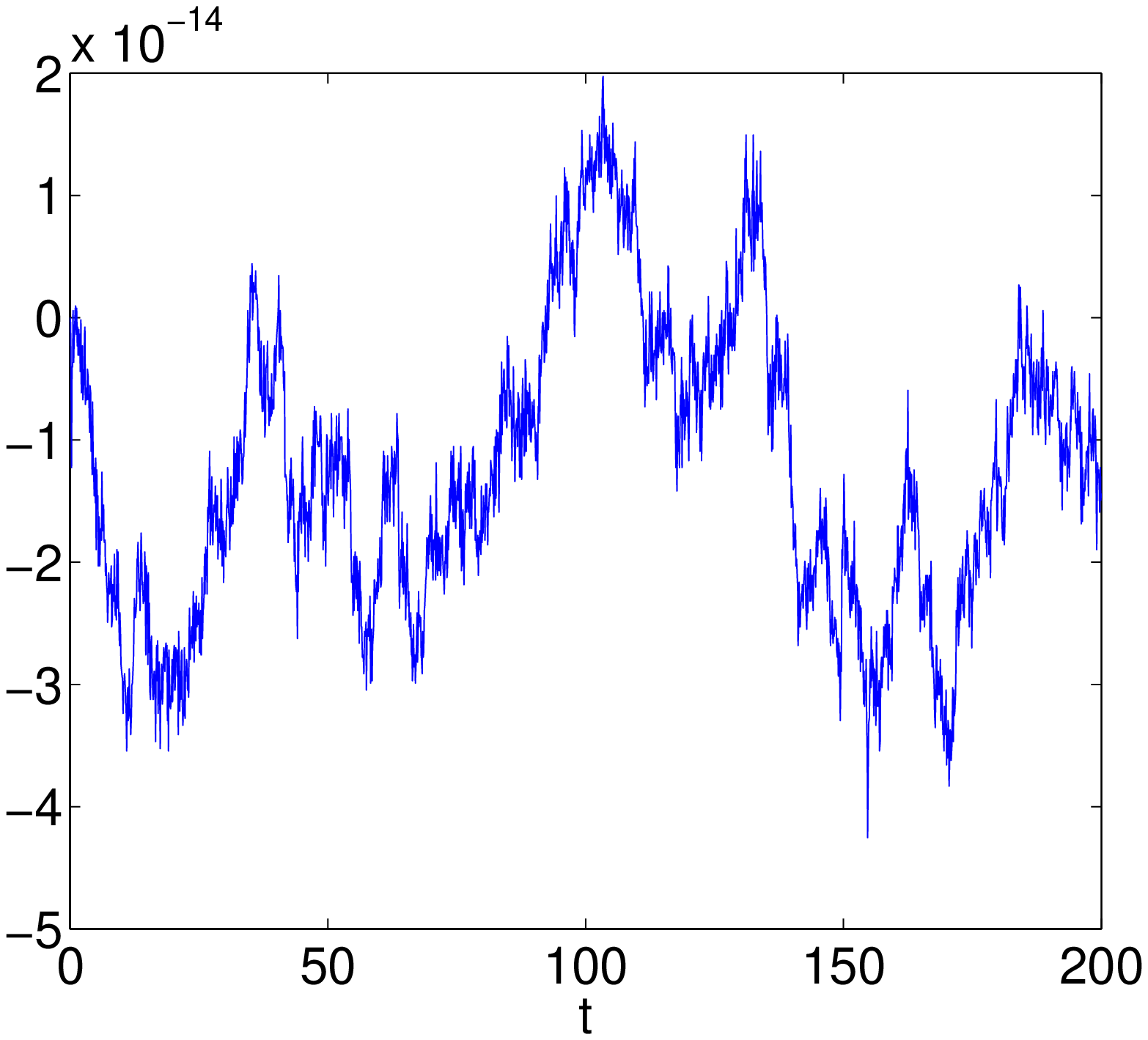}}
  \subfigure[Maximum residuals ($\mathbf{R}$) in the ECL]{\includegraphics[width=5cm,height=4cm]{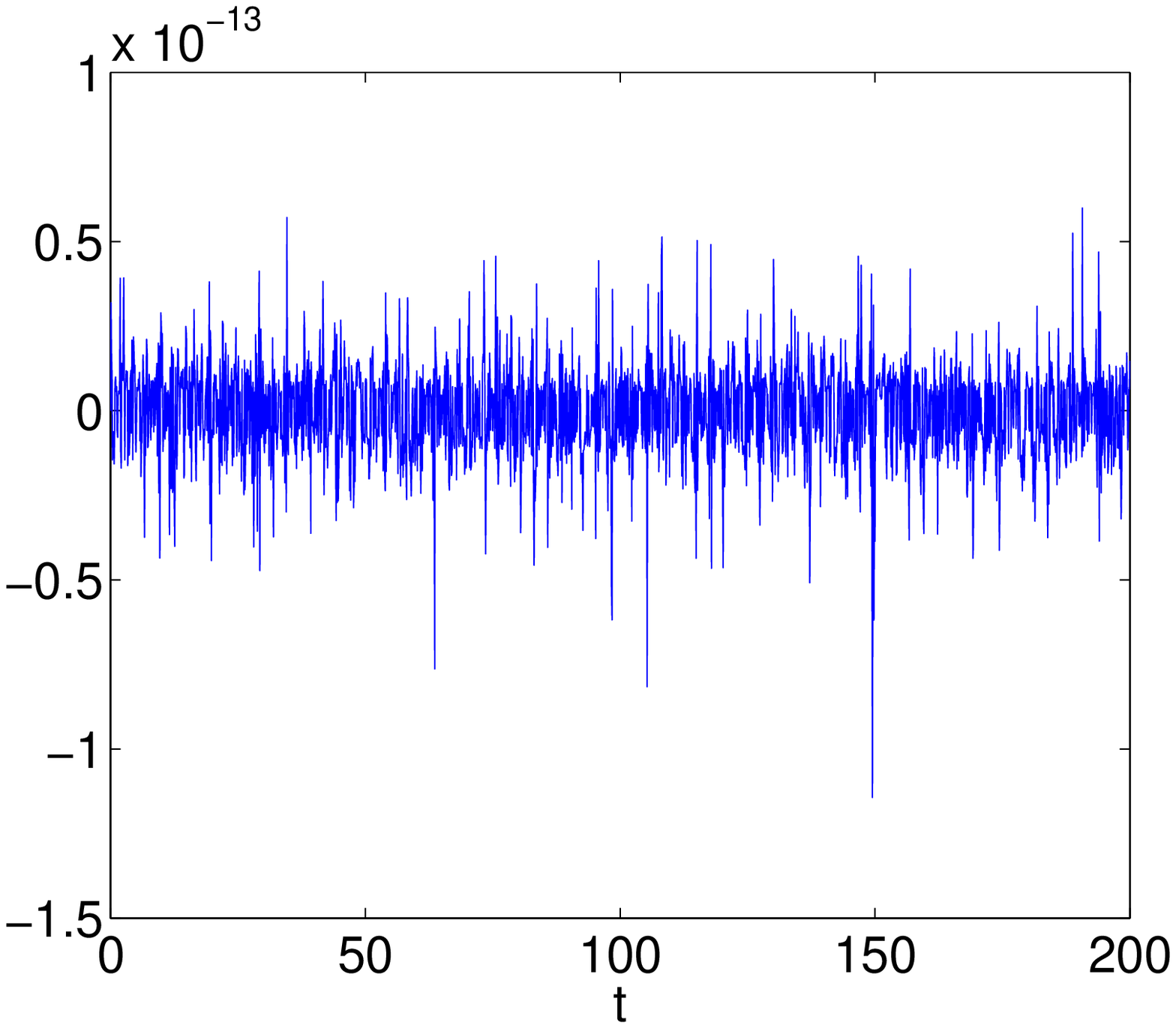}}
  \subfigure[Errors in global charge]{\includegraphics[width=5cm,height=4cm]{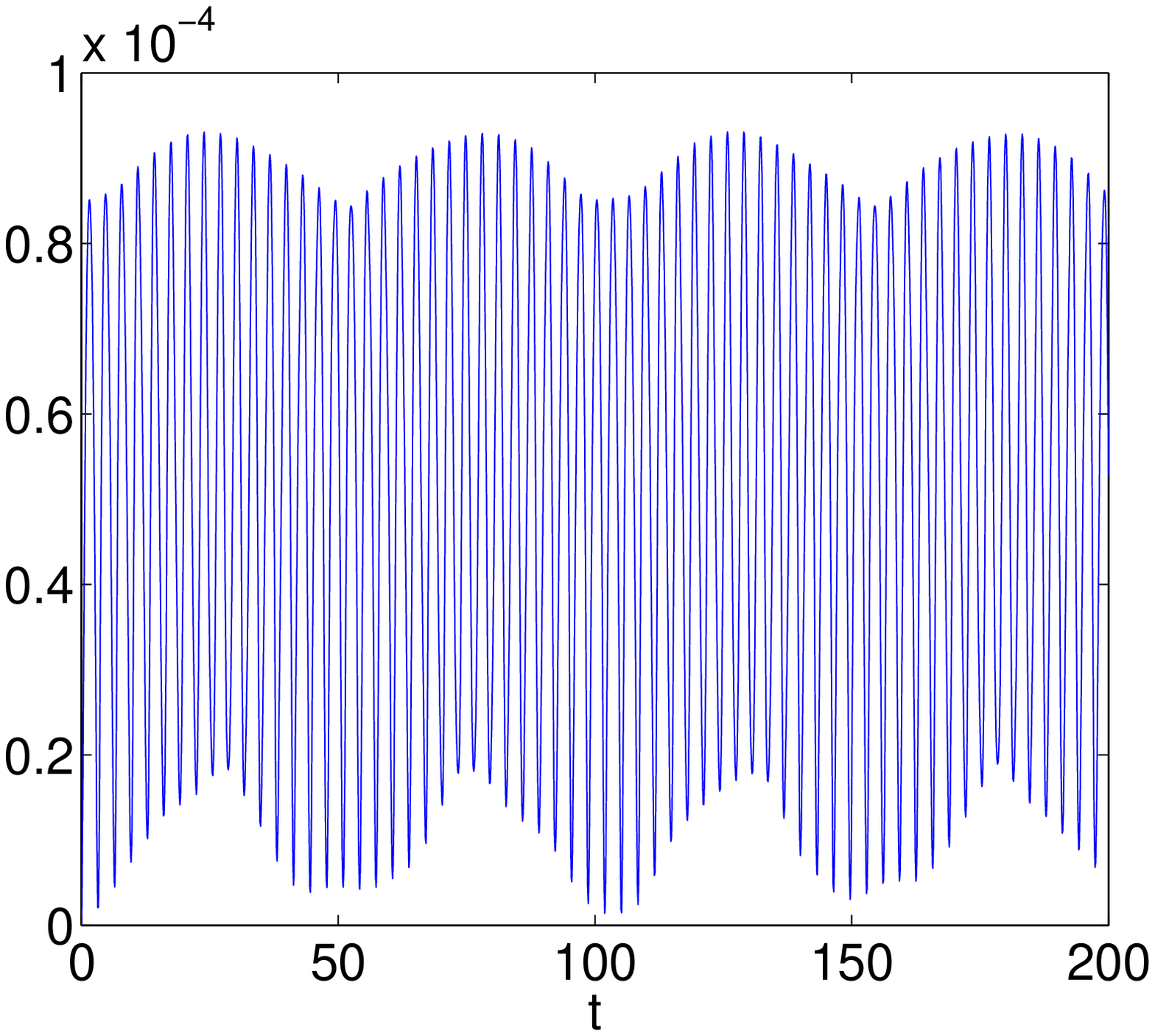}}
\end{tabular}
\caption{Errors obtained by ET2.}
\label{2DNLSR}
\end{figure}
\begin{figure}[ptb]
\centering
\begin{tabular}[c]{cccc}%
  \subfigure[The shape of $\psi$ at $t=100$]{\includegraphics[width=8cm,height=6cm]{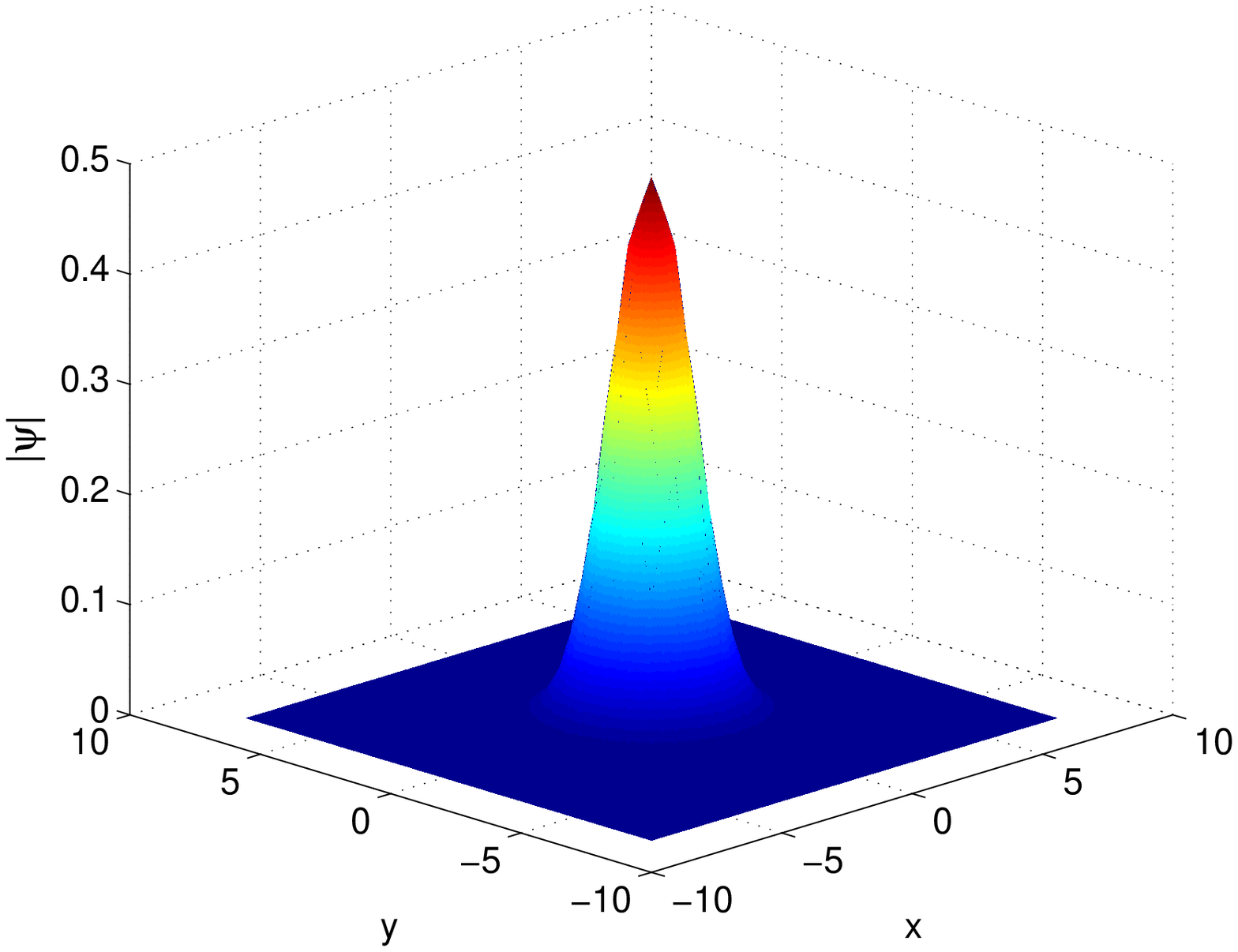}}
  \subfigure[The shape of $\psi$ at $t=200$]{\includegraphics[width=8cm,height=6cm]{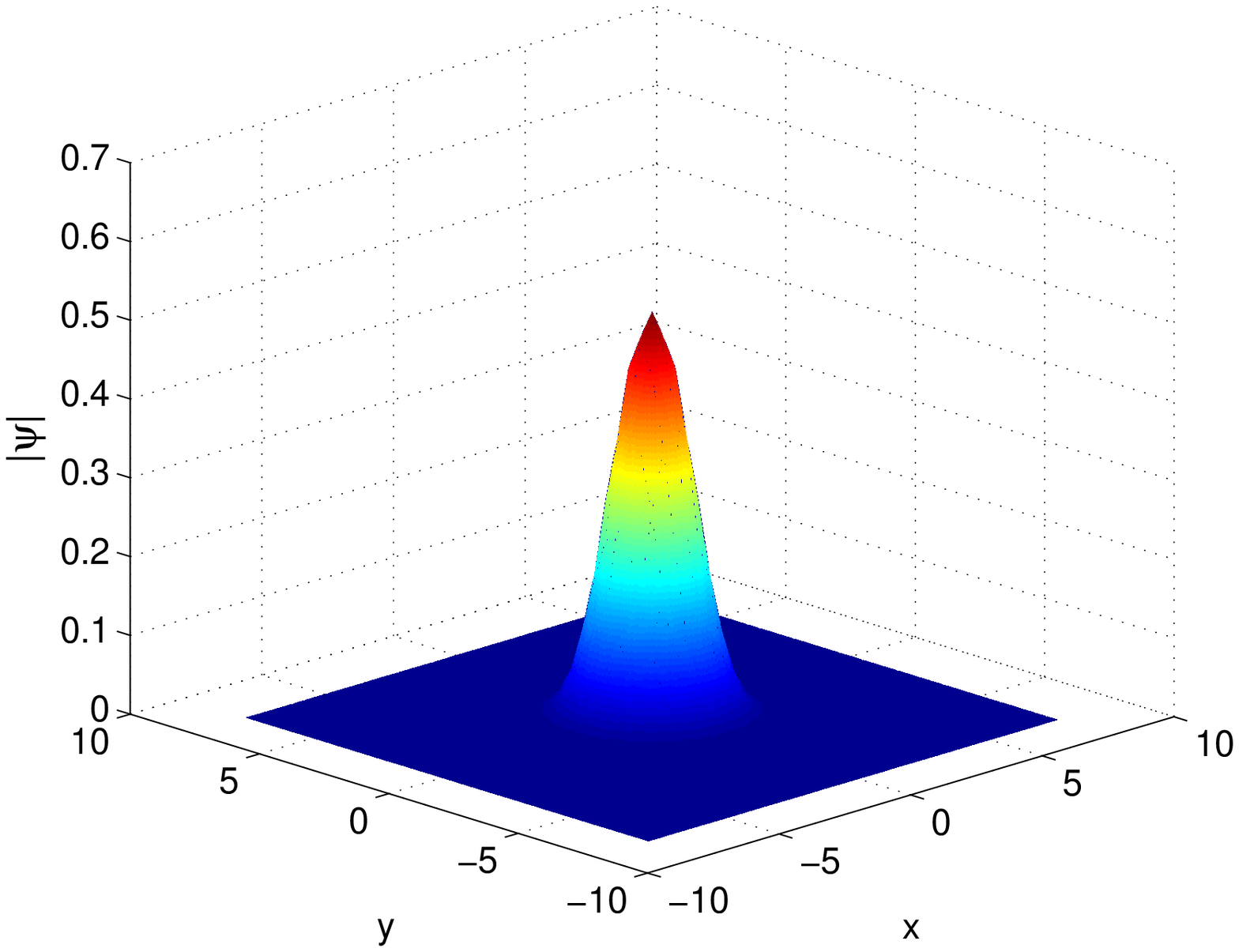}}
\end{tabular}
\caption{The numerical shapes of $\psi$.}
\label{2DNLSR2}
\end{figure}
\end{myexp}

\begin{myexp}
We then consider the 2DNLS with quintic nonlinearity:
\begin{equation}\label{7.3}
i\psi_{t}+\psi_{xx}+\psi_{yy}+V_{1}(x,y)\psi+|\psi|^{4}\psi=0,
\end{equation}
where
\begin{equation*}
V_{1}(x,y)=-A^{4}(4A^{4}(x^{2}+y^{2})-exp(-A^{4}(x^{2}+y^{2})))
\end{equation*}
is an external field, and $A$ is a constant. Its
potential is:
\begin{equation*}
V(\xi,x,y)=V_{1}(x,y)\xi+\frac{1}{3}\xi^{3}.
\end{equation*}
This equation admits the solution:
\begin{equation*}
\psi(x,y,t)=Aexp(-\frac{1}{4}A^{4}(x^{2}+y^{2}))exp(-iA^{4}t).
\end{equation*}
Its period is $\frac{2\pi}{A^{4}}$. Set
$A=1.5,x_{l}=-4,x_{r}=4,y_{l}=-4,y_{r}=4,\Delta t=0.01, N=M=42.$ We
integrate \eqref{7.3} over a very long interval [0,124] which is
about $100$ multiples of the period. Since the behaviours of ET2 and
ST2 in conserving the global charge and the energy are very similar
to those in Experiments \ref{7.1} and \ref{7.2}, they
are omitted here. The global errors of ET2 and ST2 in $l^{\infty}$
and $\frac{1}{\sqrt{NM}}l^{2}$ norms are shown in Fig. \ref{Q2DNLS}.

\begin{figure}[ptb]
\centering
\begin{tabular}[c]{cccc}%
  \subfigure[]{\includegraphics[width=8cm,height=4cm]{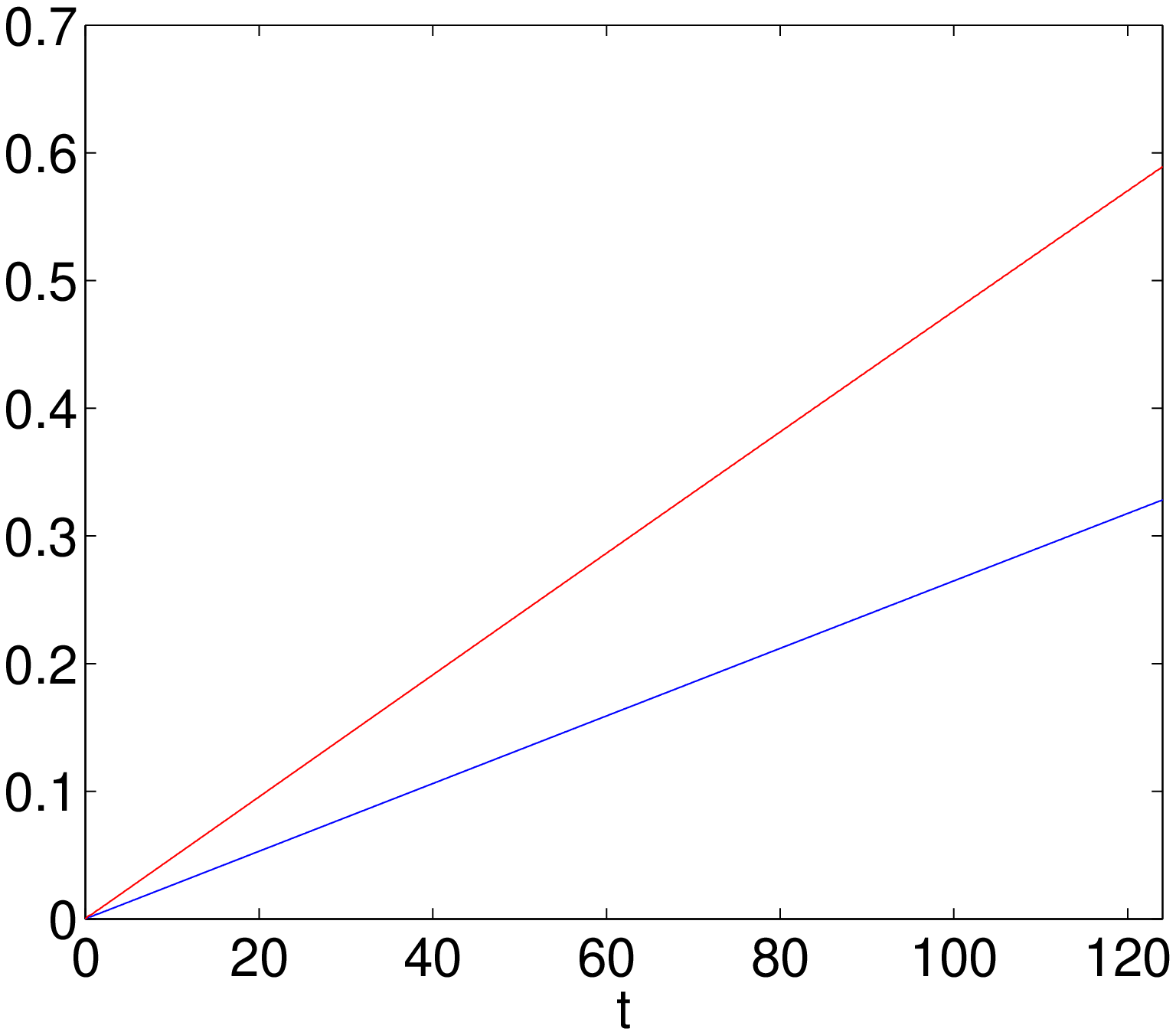}}
  \subfigure[]{\includegraphics[width=8cm,height=4cm]{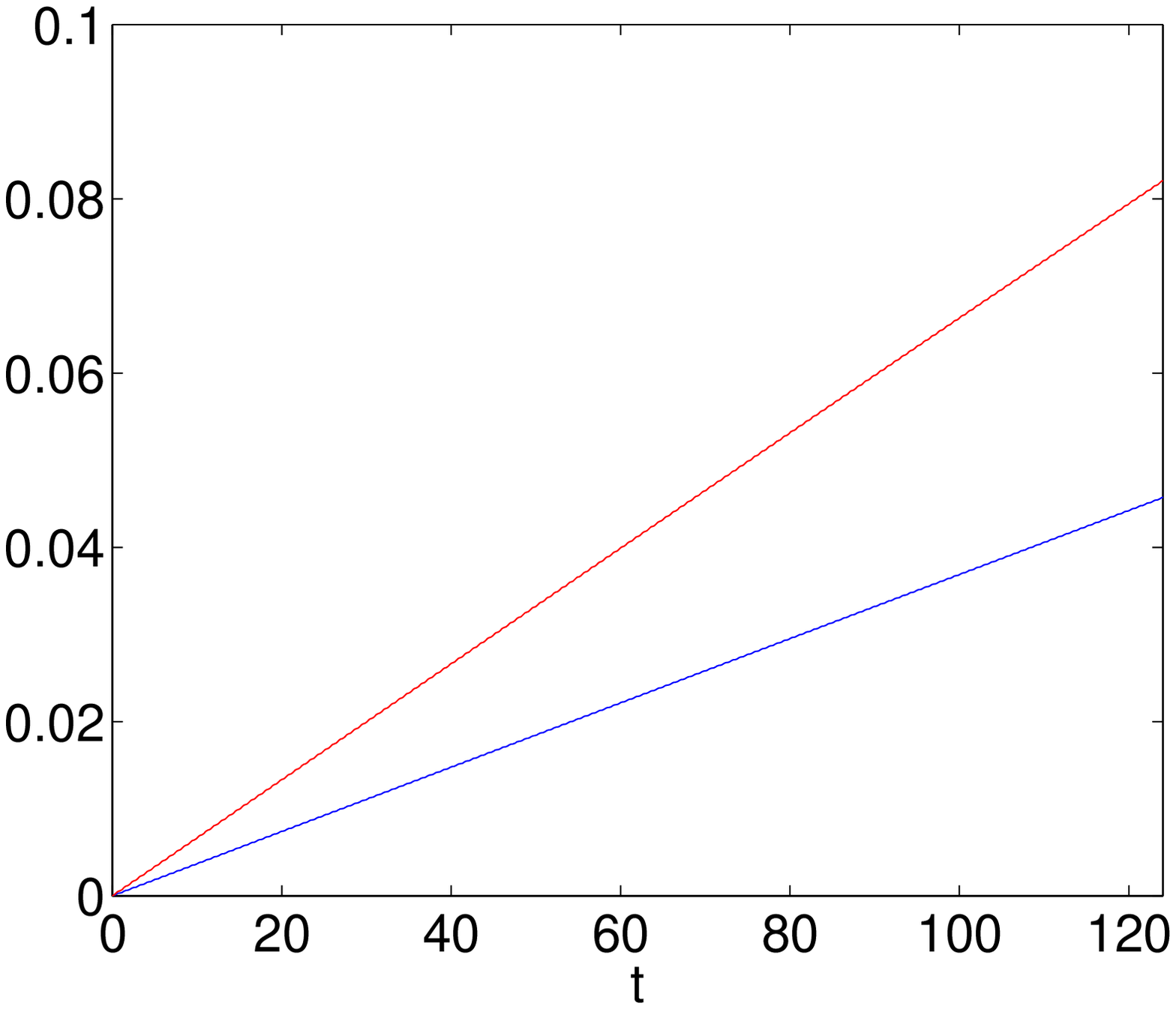}}
\end{tabular}
\caption{$l^{\infty}$ global  errors (left) and $\frac{1}{\sqrt{NM}}l^{2}$ global errors (right). The blue and
red curves are obtained by ET2 and ST2 respectively.}
\label{Q2DNLS}
\end{figure}

Clearly, in the quintic case, our method again wins over the
classical symplectic scheme ST2.
\end{myexp}

\section{Conclusions}
``For Hamiltonian differential equations there is a long-standing
dispute on the question whether in a numerical simulation it is more
important to preserve energy or symplecticity. Many people give more
weight on symplecticity, because it is known (by backward error
analysis arguments) that symplectic integrators conserve a modified
Hamiltonian" (Quote from Hairer's paper {\cite{Hairer2010}}).

However, due to the complexity of PDEs, the theory on
multi-symplectic integrators is still far from being satisfactory.
There are only a few results on some simple box schemes (e.g. the
Preissman and the Euler box scheme) and on special PDEs (e.g. the
nonlinear wave equation and the nonlinear Schr\"{o}dinger equation)
based on backward error analysis (see, e.g.
{\cite{Bridges2006,Islas2005,Moore2}}). These theories show that a
class of box schemes conserves the modified ECL and MCL(see, e.g.
{\cite{Islas2005}}). Besides, it seems there is no
robust theoretical results for the multi-symplectic (pseudo)
spectral scheme. Therefore, the local energy-preserving algorithms
may play a much more important role in PDEs than their counterparts
in ODEs.

In this paper, we presented a general local energy-preserving method
which can have arbitrarily high order for solving
multi-symplectic Hamiltonian PDEs. In our method, time is
discretized by a continuous Runge--Kutta method and space is
discretized by a pseudospectral method or a Gauss-Legendre
collocation method. It should be noted that the local energy
conservation law is admitted by more Hamiltonian PDEs than the
multi-symplectic conservation law is. Hence our local
energy-preserving methods can be more widely applied to
multi-symplectic Hamiltonian PDEs than multi-symplectic methods in
the literature. The numerical results accompanied in
this paper are plausible and promising. In the experiments on
CNLSs, our methods and the associated methods behave similarly to
the multi-symplectic methods of the same order. In the experiments
on 2DNLSs with external fields, our methods behave better than
symplectic methods in both cubic and quintic nonlinear problems.

\newpage

\end{document}